\documentclass[11pt]{amsart}

\usepackage[utf8]{inputenc}
\usepackage{graphicx}
\usepackage{xcolor}
\usepackage{amsfonts}
\usepackage{amsmath}
\usepackage{amsthm}
\usepackage{amssymb}
\usepackage{pgfplots}
\pgfplotsset{compat=1.17}
\usepackage{graphicx}
\usepackage{float}
\usepackage{upgreek}
\usepackage{siunitx}
\usepackage{booktabs}
\usepackage{textcomp, gensymb}
\usepackage{enumitem}
\usepackage{multicol}
\usepackage{mathrsfs}
\usepackage{algpseudocode}
\usepackage{adjustbox}
\usepackage{algorithm}
\usepackage{mathtools}
\usepackage{tikz-cd}
\usepackage[font=small,labelfont=bf]{caption}
\usepackage{wrapfig}
\usepackage[margin=1.24in]{geometry}
\usepackage{datetime}
\usepackage[
    backend=biber,
    style=ieee,
    citestyle=numeric-comp,
    sorting=nyt,
    dashed=false
]{biblatex}
\usepackage{hyperref}
\usepackage{xurl}
\hypersetup{
    colorlinks=true,
    linkcolor=blue,
    citecolor=red,
    breaklinks=true
}

\addbibresource{ref.bib}

\newdateformat{monthyeardate}{\monthname[\THEMONTH] \THEYEAR}

\title[Quantum harmonic analysis on locally compact groups]{Quantum harmonic analysis on locally\\ compact groups}
\author{Simon Halvdansson}
\address{Department of Mathematical Sciences, Norwegian University of Science and Technology, 7491 Trondheim, Norway.}
\email{simon.halvdansson@ntnu.no}

\date{\monthyeardate\today}

\theoremstyle{plain}
\newtheorem{theorem}{Theorem}[section]
\newtheorem*{theorem*}{Theorem}
\newtheorem{lemma}[theorem]{Lemma}
\newtheorem{proposition}[theorem]{Proposition}
\newtheorem{corollary}[theorem]{Corollary}

\theoremstyle{definition}
\newtheorem{definition}[theorem]{Definition}

\newtheorem{example}[theorem]{Example}

\theoremstyle{remark}
\newtheorem*{remark}{Remark}

\newcommand{\C}{\mathbb{C}}
\newcommand{\R}{\mathbb{R}}

\newcommand{\tr}{\operatorname{tr}}
\newcommand{\aff}{\textrm{Aff}}
\newcommand{\D}{\mathcal{D}}
\newcommand{\dmr}{\, d\mu_r}
\newcommand{\dml}{\, d\mu_\ell}

\makeatletter
\newcommand{\vast}{\bBigg@{4}}
\newcommand{\Vast}{\bBigg@{5}}
\makeatother

\def\XXint#1#2#3{{\setbox0=\hbox{$#1{#2#3}{\int}$ }
		\vcenter{\hbox{$#2#3$ }}\kern-.6\wd0}}

\begin{document}
	\maketitle
	\begin{abstract}\vspace{-9mm}
	    On a locally compact group we introduce covariant quantization schemes and analogs of phase space representations as well as mixed-state localization operators. These generalize corresponding notions for the affine group and the Heisenberg group. The approach is based on associating to a square integrable representation of the locally compact group two types of convolutions between integrable functions and trace class operators. In the case of non-unimodular groups these convolutions only are well-defined for admissible operators, which is an extension of the notion of admissible wavelets as has been pointed out recently in the case of the affine group. 
	    \vspace{6mm}
	\end{abstract}
	
	\renewcommand{\thefootnote}{\fnsymbol{footnote}}
    \footnotetext{\emph{Keywords:} Quantum harmonic analysis, Locally compact groups, Operator convolutions, Cohen's class, Localization operators}
    \renewcommand{\thefootnote}{\arabic{footnote}}
	
	\section{Introduction}
	The theory of time-frequency representations, aka phase space representations, and pseudo-differential operators has been extended from the Euclidean framework to various other settings by replacing the Schr\"odinger representation of the Weyl-Heisenberg group by a unitary representation of a locally compact group, see  \cite{kumar2021trace,mantoiu2017pseudo} for recent contributions to this circle of ideas. 
	
	A particular class of pseudodifferntial operators has been intensively studied in mathematics, quantum mechanics and time-frequency analysis which is known as Toeplitz operators, localization operators and anti-Wick quantization or covariant integral quantization in the respective fields, see \cite{Wong2002, Wong2002_Lp, Li2014,Gazeau2016, Gazeau2020,Kiukas2006} for contributions to the non-Euclidean setting.
	
	In \cite{Luef2018} the authors have established a link between the theory of localization operators and  
	quantum harmonic analysis on phase space, the latter had been introduced by Werner \cite{Werner1984}. In \cite{Werner1984} a convolution $f\star S$ between a Lebesgue integrable function $f$ and trace class operator $S$, and a convolution, $T \star S$, of two trace class operators $T, S$ are defined and shown to behave in a manner analogous to the convolution of two functions. 
	
	In a series of papers Luef and Skrettingland  have demonstrated the merits of viewing localization operators as the convolution of a function and a rank-one operator \cite{Luef2019_acc, Luef2019, Luef2021}, phase retrieval \cite{Luef2019}, frame theory \cite{skrettingland2020_gabor} and convolutional neural networks \cite{Dorfler2021}. 
	
   Furthermore, it has been noticed in \cite{Luef2019} that the time-frequency representations associated to the generalization of localization operators, $f\star S$, are Cohen class distributions defined in terms of $S$. The formulation of statements in time-frequency analysis in terms of Werner's convolution has turned out to be very fruitful and has been extended to the affine group in \cite{Berge2022}.
	
	Meanwhile, there has been interest in related problems in the more general setting of square integrable representations of locally compact groups. Examples of this include coorbit spaces \cite{Feichtinger1988, FEICHTINGER1989307, Feichtinger1989, Berge2021_coorbit, Balazs2019, Romero2012}, localization operators \cite{Wong2002, Wong2002_Lp, Li2014}, covariant integral quantizations \cite{Gazeau2016, Gazeau2020, Kiukas2006}, reproducing kernel Hilbert spaces \cite{Berge2021_rigid} and sample reconstruction \cite{Fhr2006}. Consequently, it is the goal of this paper to set up the theory of quantum harmonic analysis on locally compact groups, inspired by the construction for the affine group in \cite{Berge2022} and use it to establish notions and theorems from time-frequency analysis and time-scale analysis in the more general case of locally compact groups .
	
	The main objects in quantum harmonic analysis are the function-operator and operator-operator convolutions, which we define in this paper for a locally compact group $G$,  and a square integrable unitary representation $\sigma$ on a Hilbert space $\mathcal{H}$ as
	$$
	f \star S = \int_G f(x) \sigma(x)^* S \sigma(x)\dmr(x),\qquad T \star S (x) = \tr(T \sigma(x)^* S \sigma(x)),
	$$
	where $\mu_r$ denotes the right-invariant Haar measure on $G$.
	
	These definitions are motivated by the idea that the mapping $S \mapsto \sigma(x)^* S \sigma(x)$ corresponds to a \emph{translation} of an operator and that the trace measures the size of an operator in the same way as the integral measures the size of a function. It turns out that many properties of convolutions such as associativity and a version of Young's inequality hold mutantis mutadis for this type of convolutions, too. 
	
	The primary motivation for studying these convolutions is that $f \star S$ is a localization operator on $G$  and $S \star T$ a Cohen's class distribution if $S$ is a rank-one operator. The general case is also of interest and can be seen to correspond to extensions of the short-time Fourier transform with respect to an operator window introduced in \cite{skrettingland2020equivalent_norms} to general wavelet transforms on a locally compact group $G$.
	
	Studying the issue of integrability of operator-operator convolutions leads one to a definition of \emph{admissibility of operators}, generalizing the concept of admissible vectors for the wavelet transform. This criterion turns out to be important to applications and is the main source of discrepancy from the corresponding theory for the Weyl-Heisenberg case where all trace-class operators are admissible. 
	
	Most of the novel contributions of this paper are contained in Section \ref{sec:applications} including an uncertainty principle for Cohen's class distributions, results on the distribution of eigenvalues of mixed-state localization operators for the wavelet transform, Berezin-Lieb inequalities for both function-operator and operator-operator convolutions and a version of Wiener's Tauberian theorem giving equivalent conditions for translates of an operator to be dense in the Schatten classes $\mathcal{S}^p$ for $1\le p\le\infty$. Hence, we are able to establish a theory of quantum harmonic analysis on a locally compact group $G$, which has just one deficiency compared to \cite{Werner1984}; the lack of a multiplication theorem for the operator-valued Fourier transform.

	\subsubsection*{Outline} In Section \ref{sec:preliminaries}, we go over some preliminaries on operator theory, time-frequency analysis and locally compact groups without discussing quantum harmonic analysis in too much depth. This section can be skipped over if the reader is familiar with works such as \cite{Berge2022, Luef2018, Luef2019, Luef2019_acc}. Notably, Section \ref{sec:motivating_examples} lists three examples of square integrable representations which motivate the generalizations in the paper. In Section \ref{sec:convolutions}, we define the three convolutions in quantum harmonic analysis; function-function, function-operator and operator-operator, and establish some elementary properties. Section \ref{sec:admissibility} is devoted to the construction and properties of admissible operators while Section \ref{sec:applications} goes through applications mainly related to the notion of admissible operators.
	
	\subsubsection*{Notational conventions}
	Throughout this article, a general locally compact group will be denoted by $G$ with the zero element denoted by $0_G$, general elements denoted by $x, y, z$ and the associated left and right Haar measures written as $\mu_\ell$ and $\mu_r$, respectively. Furthermore,  $\mathcal{H}$ will denote a Hilbert space, any norm without a subscript will be assumed to be taken in $\mathcal{H}$ and for an operator $A$, $A^*$ will denote its adjoint. The set $\mathcal{U}(\mathcal{H})$ is the set of all unitary operators on $\mathcal{H}$ and the most notable members are unitary square integrable representations $\sigma$ of $G$. For $p < \infty,\, \mathcal{S}^p$ denotes the Schatten $p$-class of operators with singular values in $\ell^p$ and by $\mathcal{S}^\infty$ we mean $B(\mathcal{H})$, the set of all bounded linear operators on $\mathcal{H}$. 
	
	\section{Preliminaries}\label{sec:preliminaries}
	In this section we go over some of the preliminaries of quantum harmonic analysis, time-frequency analysis and locally compact groups. The exposition is similar to that in \cite{Luef2018, Luef2019_acc, Berge2022} and related works. See also \cite{Wong2002} for an introduction more focused on the setting of locally compact groups.
	
	\subsection{Operator theory}
	\subsubsection{Singular value decomposition}
	We will frequently make use of the singular value decomposition of a compact operator $A$ which has the form
	$$
	A = \sum_n s_n(A) (\psi_n \otimes \phi_n)
	$$
	where $\{\psi_n\}_n$ and $\{ \phi_n \}_n$ are two orthonormal sets in $\mathcal{H}$, $(s_n(A))_n$ is a sequence converging to zero and $\psi \otimes \phi$ is the \emph{rank-one operator}, defined by $(\psi \otimes \phi) (\xi) = \langle \xi, \phi\rangle \psi$. The sum converges in the strong topology of $B(\mathcal{H})$ and the numbers $s_n(A)$ are the eigenvalues of $\sqrt{A^*A}$ and are called the singular values of $A$.

In case $A$ is a positive, compact operator, then we can take $\phi_n = \psi_n$ and the singular values $s_n(A)$ agree with  the eigenvalues $\lambda_n(A)$ of $A$. Hence, we have the following spectral decomposition of $A$:
	$$
	A = \sum_n \lambda_n(A) (\psi_n \otimes \psi_n).
	$$
	\subsubsection{Schatten classes of operators}
	The Schatten class $\mathcal{S}^p$ is the space of compact operators with singular values in $\ell^p$ for $p\in[1,\infty]$. In particular, the space $\mathcal{S}^1$ is referred to as the space of \emph{trace-class} operators and $\mathcal{S}^2$ as the space of \emph{Hilbert-Schmidt} operators. It is a non-trivial fact that $\mathcal{S}^p$ is a Banach space for any $1 \leq p \leq \infty$ and that $\mathcal{S}^2$ is a Hilbert space with the inner product $\langle S, T \rangle_{\mathcal{S}^2} =\tr(ST^*)$. For a trace-class operator $S \in \mathcal{S}^1$, we define the \emph{trace} of $S$ as
	$$
	\tr(S) = \sum_n \langle Se_n, e_n \rangle
	$$
	where $\{e_n\}_n$ is an orthonormal basis. This quantity is finite and independent of the chosen orthornomal basis. Similarly, it turns out that the Schatten p-norm of $S \in \mathcal{S}^p$ can be shown to be equal to $\Vert S \Vert_{\mathcal{S}^p}^p = \tr(|S|^p)$ for $1 \leq p < \infty$ where $|S|$ is the absolute value of $S$.
	
	In the same way as for $L^p$-spaces of functions, we define duality brackets for conjugate $\mathcal{S}^p$ spaces as
	$$
	\langle A, B \rangle_{\mathcal{S}^p, \mathcal{S}^q} = \tr(AB)
	$$
	where $A \in \mathcal{S}^p, B \in \mathcal{S}^q$ and $\frac{1}{p} + \frac{1}{q} = 1$.
	\subsubsection{Vector valued integration}\label{sec:vector_integration}
	In defining convolutions between functions and operators, we will need to integrate operator valued functions $H : G \to B(\mathcal{H})$ which are of the form $H(x) = f(x) F(x)$ where $f \in L^1_r(G)$ and $F : G \to B(\mathcal{H})$ are measurable, bounded and strongly continuous. The operator-valued integral of $H$ is then defined weakly as
	\begin{align*}
	    \left\langle \left( \int_G f(x) F(x) \,d\mu \right) \psi, \phi\right\rangle = \int_G f(x) \langle F(x)\psi, \phi \rangle \,d\mu
	\end{align*}
	for $\psi, \phi \in \mathcal{H}$ where $\mu$ is a measure on $G$. For more on this matter of defining operators via integrals, known as Bochner integration, see the discussion in \cite[Sec. 2.3]{Luef2018}.

    \subsection{Time-frequency analysis}
    We briefly introduce some of the main objects of time-frequency analysis. For a more thorough introduction, see e.g. \cite{grochenig_book, daubechiesTen}.
    
    \subsubsection{Short-time Fourier transform}
    Perhaps the most classical tool of time-frequency analysis is the following \emph{time-frequency representation}: Given $\psi, \varphi \in L^2(\R^d)$, the \emph{short-time Fourier transform} (STFT) of $\psi$ with respect to the \emph{window} $\varphi$ is the function
    $$
    V_\varphi \psi(x, \omega) = \int_{\R^d}\psi(t) \overline{\varphi(t-x)} e^{-2\pi i \omega\cdot t}\,dt
    $$
    on $\R^{2d}$. It has shown to be useful to consider the STFT to be induced by the \emph{(projective) representation} $\pi(x, \omega) = M_\omega T_x$ of the \emph{Weyl-Heisenberg group} where
    $$
    T_x f(t) = f(t-x),\qquad M_\omega f(t) = e^{2\pi i \omega t} f(t).
    $$
    In this notation we can write the STFT as $V_\varphi\psi(x,\omega) = \langle \psi, \pi(x,\omega) \varphi \rangle$. The STFT is a member of $L^2(\R^{2d})$ which can be seen by an application of \emph{Moyal's identity}
    $$
    \big\langle V_{\varphi_1}\psi_1, V_{\varphi_2}\psi_2\big\rangle_{L^2(\R^{2d})} = \langle \psi_1, \psi_2 \rangle \overline{\langle \varphi_1, \varphi_2 \rangle}.
    $$
    Often in applications the square of the modulus of the STFT, the \emph{spectrogram}, is used because it possesses many nice properties such as non-negativity. It is a \emph{quadratic} time-frequency representation.
    
    \subsubsection{Wigner distribution}\label{sec:wigner_prelim}
    Another quadratic time-frequency distribution is the \emph{Wigner distribution}, introduced by Wigner \cite{Wigner1932} in the 1930's. Given two functions $\psi, \phi \in L^2(\R^d)$, the cross-Wigner distribution of $\psi$ and $\phi$ is given by
    $$
    W(\psi, \phi)(x, \omega) = \int_{\R^d}\psi(t+x/2)\overline{\phi(t-x/2)}e^{-2\pi i \omega \cdot t}\,dt.
    $$
    When $\psi = \phi$, we simply write $W(\psi, \psi) = W(\psi)$ and call it the Wigner distribution.
    
    The Wigner distribution has numerous applications in engineering, mathematics and physics as a time-frequency representation. In addition, it is also the ``dual" object to Weyl quantization, a well-known \emph{quantization scheme} that  associates operators to functions, via
    $$
    \langle L_f \psi, \phi \rangle = \langle f, W(\phi, \psi) \rangle
    $$
    and the map $f \mapsto L_f$ is called the \emph{Weyl transform}, aka Weyl quantization.
    
    \subsubsection{Cohen's class of quadratic time-frequency distributions}\label{sec:cohen_prelims}
    Cohen's class provides a nice class of covariant quadratic time-frequency distributions, including the spectrogram, scalogram and Wigner distribution. It consists of all functions of the form
    $$
    Q_\Phi(\psi, \phi) = W(\psi, \phi) * \Phi
    $$
    where $\Phi$ is a function or tempered distribution. It turns out that many of the properties of Cohen's class distributions are determined by the Weyl transform of $\Phi$ and later in the paper, we will define Cohen's class of not just time-frequency distributions but distributions with different underlying groups using an operator as a replacement for the Weyl transform of $\Phi$. For more on Cohen's class of time-frequency distributions, see \cite{Cohen1989, Luef2019_acc}.
    
    \subsubsection{Localization operators}\label{sec:loc_op}
    Localization operators are classically defined with respect to the short-time Fourier transform as the operator valued integral
    \begin{align*}
        A_f^{\varphi_1, \varphi_2}(g) = \int_{\R^{2d}} f(x, \omega)V_{\varphi_1}g(x, \omega) \pi(x, \omega) \varphi_2\, dx\, d\omega.
    \end{align*}
    The function $f$ is referred to as the \emph{mask}, \emph{multiplier} or \emph{filter} and is often taken to be the indicator function of some compact set. In that case, the localization operator is written as $A_{\chi_\Omega}^{\varphi_1, \varphi_2} = A_\Omega^{\varphi_1, \varphi_2}$. Localization operators of the above form were originally introduced by I. Daubechies in \cite{Daubechies1990}.
    
    \subsection{Locally compact groups}
    Much of abstract harmonic analysis is carried out on locally compact groups because they possess many of the properties we need to define convolutions and other objects. For more on the general theory of harmonic analysis on locally compact groups, see \cite[Chap. 2]{Folland} and \cite{fuhr05}.
    
    \subsubsection{Left and right Haar measures}\label{sec:haar_measure}
    Given a locally compact group $G$, there always exists two Radon measures, the left Haar measure $\mu_\ell$ and the right Haar measure $\mu_r$. The left (right) Haar measure is left (right) invariant, meaning that $\mu_\ell(xE) = \mu_\ell(E)$ ($\mu_r(Ex) = \mu_r(E)$) for $x \in G$ and $E \subset G$. Both measures are equivalent in the sense that they are related via 
    $$
    \mu_r(E) = \mu_\ell(E^{-1}),\qquad \dmr(x) = \Delta_G(x^{-1})\dml(x),\qquad \dml(xy) = \Delta_G(y)\dml(x)
    $$
    where the function $\Delta_G : G \to (0, \infty)$ is called the \emph{modular function}. If $\Delta_G \equiv 1$, the group is said to be \emph{unimodular}. When discussing $L^p$-integrable functions $f : G \to \C$ with respect to the left and right Haar measures, we write
    $$
    L^p_\ell(G) = L^p(G, \dml),\qquad L^p_r(G) = L^p(G, \dmr)
    $$
    while for $p = \infty$, we simply write $L^\infty(G)$.
    
    \subsubsection{Weyl-Heisenberg group}
    The Weyl-Heisenberg group $\mathbb{H}^n = (\R^n \times \R^n \times \R, \cdot_{\mathbb{H}^n})$ is equipped with the group operation
    $$
    (x, \omega, t) \cdot_{\mathbb{H}^n} (x', \omega', t') = \left( x+x', \omega + \omega', t+t' + \frac{1}{2}(x'\omega - x\omega') \right).
    $$
    which should be compared with the the composition rule for time-frequency shifts
    \begin{align}\label{eq:WH_representation}
        (T_x M_\omega) (T_{x'} M_{\omega'}) = e^{2\pi i x' \cdot \omega} T_{x+x'}M_{\omega + \omega'}.
    \end{align}
    We are interested in the \emph{projective} representation
    $$
    \pi : \R^{2d} \to \mathcal{U}(L^2(\R^d)),\qquad \pi(x,\omega) = T_x M_\omega
    $$
    of the Weyl-Heisenberg group for which \eqref{eq:WH_representation} is the Mackey induced representation.

    \subsubsection{Wavelet transform}
    The wavelet transform is a \emph{time-scale representation} based on taking the inner product of a signal and translations and dilations of some window function. The dilation operator $D_a$ is defined for positive $a$ as
    $$
    D_a f(y) = \frac{1}{\sqrt{a}}f\left(\frac{y}{a}\right)
    $$
    and hence the wavelet transform has the form
    $$
    W_\phi \psi(x,a) = \big\langle \psi, T_x D_a \phi\big\rangle = \frac{1}{\sqrt{a}}\int_\R \psi(t) \overline{\phi \left(\frac{t-x}{a}\right)}\, dt.
    $$
    There exists a version of Moyal's identity for the wavelet transform, often referred to simply as the \emph{orthogonality relation}
    $$
    \langle V_{\phi_1}\psi_1, V_{\phi_2}\psi_2\rangle = \langle \psi_1, \psi_2 \rangle \langle \D^{-1}\phi_1, \D^{-1}\phi_2 \rangle
    $$
    where $\D$ denotes the Duflo-Moore operator of the affine group and is defined by
    $$
    \widehat{\D^{-1} \phi}(\omega) = \frac{\hat{\phi}(\omega)}{\sqrt{|\omega|}};
    $$
    and $\phi_1,\phi_2$ are two admissible wavelets. The square of the wavelet transform is referred to as the \emph{scalogram} in analogy to the spectrogram and used similarly.
    
    \subsubsection{Affine group}\label{sec:affine_group}
    The affine group $\aff = (\R \times \R^+, \cdot_\aff)$ has the group operation
    $$
    (x, a) \cdot_\aff (y, b) := (x+ay, ab)
    $$
    which coincides with the relation
    $$
    (T_x D_a)(T_y D_b) = T_x T_{ay} D_a D_b = T_{x+ay}D_{ab}
    $$
    between translation and dilation operators. Thus the representation $\pi(x,a) = T_x D_a$ of the affine group induces the wavelet transform discussed above. Moreover, it is easy to see that $\pi(x,a) = T_x D_a$ is a unitary representation of $\aff$ on $L^2(\R)$. Often, when dealing with wavelet analysis we are only interested in \emph{analytic signals} which are $L^2$-functions for which $\hat{f}(\omega) = 0$ for $\omega < 0$ because the above representation is irreducible on the Hardy space of the real line. Using the Plancherel theorem, we can do everything on the Fourier side where our underlying Hilbert space becomes $\mathcal{H} = L^2(\R^+)$ which will be the case for the remainder of this paper when discussing quantum harmonic analysis on the affine group.
    
    A quick calculation shows that the inverse $(x,a)^{-1}$ of an element $(x,a) \in \aff$ is given by $\left( \frac{-x}{a}, \frac{1}{a} \right)$. The affine group is an example of a non-abelian and non-unimodular group since the left and right Haar measures are given by
    $$
    d\mu_\ell(x,a) = \frac{dx \,da}{a^2},\qquad d\mu_r(x, a) = \frac{dx \,da}{a},
    $$
    respectively and so in particular, $L^p_r(\aff) = L^p\big(\R \times \R^+, \frac{dx \,da}{a}\big)$.
    
    We will have use for dilates of sets in the affine group and so motivated by the group operation specified above, we define the scaling function $\Gamma_R : \aff \to \aff$ with parameter $R > 0$ as
    $$
    \Gamma_R(x, a) = (Rx, a^R),\qquad \Gamma_R^{-1}(x,a) = \left(\frac{x}{R}, a^{1/R}\right),\qquad  R\Omega = \big\{ \Gamma_R(x, a), (x, a) \in \Omega \big\}.
    $$
    This scaling relation is natural for the right Haar measure in the sense that it is the only one which is the identity for $R = 1$ and for which $\dmr(\Gamma_R(x,a)) = C\cdot \dmr(x,a)$ for some non-zero constant $C$. In particular, the constant $C$ has the value $R^2$ which can be verified directly and consequently,
    $$
    \dmr(\Gamma_R(x,a)) = R^2\dmr(x,a),\qquad \mu_r(R\Omega) = R^2\mu_r(R\Omega).
    $$
    For technical reasons, we will need to discuss convergence of sequences in $\aff$ as well as neighborhoods. To that end, we define the following distance function
    $$
    d_r^\aff \big( (x,a), (y,b) \big) = |x-y| + \Big|\ln\frac{a}{b}\Big|.
    $$
    It is chosen mostly for convenience but has the nice property that the distance from $(x,a)$ and $(0,1) = 0_\aff$ is given by $|x| + |\ln(a)|$ which is the sum of the horizontal and vertical distance when integrating with respect to the right Haar measure $\frac{dx\,da}{a}$.
    
    Using this distance function, we define the following type of balls in $\aff$:
    $$
    B_r^\aff\big( (x,a), \delta \big) = \big\{ (y,b) \in \aff : d_r^\aff\big((x,a), (y,b)\big) < \delta \big\}.
    $$
    Additional properties of the affine group relevant to quantum harmonic analysis are discussed in \cite{Berge2022}.
    
    \subsubsection{Abstract harmonic analysis}
    The approach for the Weyl-Heisenberg and affine groups described above can be generalized to the locally compact setting. Here we let $\sigma$ denote a square integrable unitary representation of a locally compact group $G$ and write $\mathcal{H}$ for the underlying Hilbert space so that $\sigma : G \to \mathcal{U}(\mathcal{H})$. This view is more closely connected to quantum mechanics and representation theory than time-frequency and time-scale analysis but many of the arguments work in the same way. Partly due to this connection to physics, $G$ is referred to as \emph{phase space}.
    
    \subsection{Motivating examples of square integrable representations}\label{sec:motivating_examples}
	Since the main contribution of this paper is setting up quantum harmonic analysis on general locally compact groups, we present two motivating examples on which the results apply and hint at their  generalizations to locally compact groups.
	\subsubsection{Shearlet group}
	The shearlet group represents an attempt to extend the wavelet transform to two-dimensional inputs. The dilations and one-dimensional translations of the wavelet transform are here replaced by asymmetric dilations, shears and two-dimensional translations using the \textit{parabolic scaling matrix} $A_a$ and the \textit{shear matrix} $S_s$, given by
	$$
	A_a = \begin{pmatrix}
	    a & 0\\
	    0 & \sqrt{a}
	\end{pmatrix},\qquad S_s = \begin{pmatrix}
	    1 & s\\
	    0 & 1
	\end{pmatrix}.
	$$
	The associated square integrable unitary representation can be written as
	$$
	\pi(a,s,x) \psi(t) = T_xD_{S_s A_a} \psi(t) = a^{-3/4} \psi\big(A_a^{-1} S_s^{-1} (t-x)\big)
	$$
	and it induces a group operation on the \emph{shearlet group} $\mathbb{S} = (\R^+ \times \R \times \R^2, \cdot_\mathbb{S})$,
	$$
	(a,s, x) \cdot_{\mathbb{S}} (a', s', x') = (aa', s+s'\sqrt{a}, x + S_s A_a x').
	$$
	The left and right Haar measures associated to the shearlet group can be computed to be
	$$
	\dml(a,s,x) = \frac{da\,ds\,dx}{a^3},\qquad \dmr(a,s,x) = \frac{da\, ds\, dx}{a}.
	$$
	For more on the shearlet group as well references for the statements above, see \cite{guo2006, DAHLKE2008, Kutyniok2012, Dahlke2009}. The shearlet group and associated transform has been generalized to higher dimensions which is also based on a square integrable representation, see \cite{Dahlke2009_multidimensional}.
    
    \subsubsection{Similitude group}
    The perhaps most straight-forward generalization of the wavelet transform to two-dimensional signals comes in the form of what is sometimes referred to as the \emph{two-dimensional wavelet transform} which is induced by the unitary representation
    $$
    \pi(a, x, \theta)\psi(t) = a^{-1}\psi\Big( \tau_{-\theta}\Big(\frac{t-x}{a}\Big) \Big)
    $$
    of the \emph{similitude group} $\mathrm{SIM}(2) = (\R^+ \times \R^2 \rtimes \mathrm{SO}(2), \cdot_{\mathrm{SIM}(2)})$ where $\tau_\theta \in \mathrm{SO}(2)$ is a rotation which acts as 
    $$
    \tau_\theta(x,y) = (x\cos\theta - y\sin\theta, x\sin\theta + y\cos\theta).
    $$
    The similitude group $\mathrm{SIM}(2)$ is equipped with the group operation 
    $$
    (a,x, \tau_\theta) \cdot_{\mathrm{SIM}(2)} (a', x', \tau_{\theta'}) = (aa', b+a\tau_\theta x', \tau_{\theta+\theta'}).
    $$
    Just as for shearlet group, the left and right Haar measures of $\mathrm{SIM}(2)$ dependend on the dilation parameter and are given by
    $$
	\dml(a,x,\theta) = \frac{da\,dx\,d\theta}{a^3},\qquad \dmr(a,x,\theta) = \frac{da\, dx\, d\theta}{a}.
    $$
    More on role of the similitude group in the two-dimensional wavelet transform can be found in \cite{Sagiv2006, Ali2000, Antoine1996, Dasgupta2022}. Similitude groups can be seen as a specific case of the affine group on $\R^d$ which generalizes the affine group by replacing dilations with multiplications by matrices in $\mathrm{GL}(\R^d)$ and normalized by the determinant. This also corresponds to a square integrable representation \cite{Kutyniok2012}.

    \subsubsection{Affine Poincaré group}
    Another approach to two-dimensional wavelet transforms comes in the form of the affine Poincaré group $\mathcal{P}_{\textrm{aff}}$ which consists of translations, zooming and hyperbolic rotations. More specifically, the group law takes the form
    $$
    (b,a,\vartheta) \cdot_{\mathcal{P}_\aff} (b', a', \vartheta') = (b+a\Lambda_\vartheta b', aa', \vartheta + \vartheta'),\qquad \Lambda_\vartheta = \begin{pmatrix}
        \cosh \vartheta & \sinh \vartheta\\
        \sinh \vartheta & \cosh \vartheta
    \end{pmatrix}
    $$
    Since we have the same sort of zooming and rotational system as in the shearlet and similitude groups, the affine Poincaré group has the same left and right Haar measures given by
    $$
	\dml(b,a,\vartheta) = \frac{db\,da\,d\vartheta}{a^3},\qquad \dmr(b,a,\vartheta) = \frac{db\, da\, d\vartheta}{a}.
    $$
    The natural square integrable representation of $\mathcal{P}_\aff$ is given by
    $$
    \pi(b,a,\vartheta)\psi(t) = \frac{1}{a}\psi\left( \frac{1}{a} \Lambda_\vartheta(t-b) \right)
    $$
    and it can be decomposed as the direct sum of four irreducible representations on orthogonal subsets of $L^2(\R^2)$.
    
    An introduction to the affine Poincaré group can be found in \cite[Sec. 7.4]{Antoine2004} and some recent work extending concepts from time-frequency analysis to $\mathcal{P}_\aff$ is available in \cite{Dasgupta2022_poincare}. The group also has applications in physics in the context of Minkowski spacetime \cite[Sec. 16.2.4]{Ali2014}.
    
	\section{Operator convolutions}\label{sec:convolutions}
	In this section we introduce the three types of convolutions we deal with: function-function, function-operator and operator-operator and prove some elementary properties and bounds. All of the definitions generalize those in \cite{Werner1984, Luef2018, Berge2022} and the proofs are similar with the exception of Proposition \ref{prop:function_operator_convolution_trace_class}. The function-function convolutions are standard but we write them down to be clear about the right Haar measure convention.
	\begin{definition}
		For $f, g \in L^1_r(G)$, the \emph{convolution} $f *_G g$ is defined as
		$$
		f *_{G} g(x) = \int_{G} f(y) g(x y^{-1}) \dmr(y).
		$$
	\end{definition}
	The following standard estimate follows by Young's inequality.
	\begin{proposition}\label{prop:function_convolution_L1_estimate}
		Let $f \in L^1_r(G)$ and $g \in L^p_r(G)$ for $1 \leq p \leq \infty$. Then
		$$
		\Vert f *_G g \Vert_{L^p_r(G)} \leq \Vert f \Vert_{L^1_r(G)} \Vert g \Vert_{L^p_r(G)}.
		$$
	\end{proposition}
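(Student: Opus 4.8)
The plan is to reduce the statement to the scalar Minkowski integral inequality together with the right-invariance of $\mu_r$. The one structural fact that makes everything work is that, for fixed $y\in G$, the right translate $x\mapsto g(xy^{-1})$ has the same $L^p_r$-norm as $g$ itself: performing the substitution $x\mapsto xy$ and using $\mu_r(Ey)=\mu_r(E)$ gives
$$
\int_G \big|g(xy^{-1})\big|^p\dmr(x) = \int_G |g(x)|^p\dmr(x),
$$
i.e.\ $\big\|g(\cdot\,y^{-1})\big\|_{L^p_r(G)} = \|g\|_{L^p_r(G)}$ for every $y$. (This is precisely the point where the right Haar measure convention in the definition of $*_G$ is needed; with $\mu_\ell$ a modular factor would intrude.)

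For $1\le p<\infty$ I would then view $f*_G g$ as a vector-valued (Bochner) integral of the $L^p_r$-valued map $y\mapsto f(y)\,g(\cdot\,y^{-1})$ and apply Minkowski's integral inequality:
$$
\big\| f*_G g\big\|_{L^p_r(G)}
= \Big\| \int_G f(y)\, g(\cdot\,y^{-1})\dmr(y)\Big\|_{L^p_r(G)}
\le \int_G |f(y)|\,\big\| g(\cdot\,y^{-1})\big\|_{L^p_r(G)}\dmr(y)
= \|f\|_{L^1_r(G)}\,\|g\|_{L^p_r(G)},
$$
where the last equality uses the translation-invariance identity above. For $p=\infty$ the bound is immediate and elementary: for a.e.\ $x$,
$$
\big| f*_G g(x)\big| \le \int_G |f(y)|\,\big|g(xy^{-1})\big|\dmr(y) \le \|g\|_{L^\infty(G)}\int_G |f(y)|\dmr(y),
$$
so $\|f*_G g\|_{L^\infty(G)}\le \|f\|_{L^1_r(G)}\|g\|_{L^\infty(G)}$. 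Alternatively, one could establish the cases $p=1$ (by Tonelli's theorem, interchanging the order of integration in $\iint |f(y)||g(xy^{-1})|\dmr(y)\dmr(x)$) and $p=\infty$ directly, and then obtain all intermediate $p$ by Riesz--Thorin interpolation in $g$; I would favour the Minkowski route as it is self-contained and avoids interpolation machinery.

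The argument is essentially routine, so there is no serious obstacle; the only points deserving care are (i) checking that $f*_G g(x)$ is well-defined for a.e.\ $x$ and measurable — which follows for $p=1$ from Tonelli and in general by first reducing to the $p=1$ estimate applied to $|f|,|g|$ — and (ii) justifying the use of the integral Minkowski inequality / interchange of integration, which is legitimate here because $f\in L^1_r(G)$ and the integrand $f(y)g(xy^{-1})$ is jointly measurable (the latter being standard for locally compact groups, cf.\ \cite[Chap.~2]{Folland}). Once these measurability technicalities are dispatched, the displayed chain of inequalities completes the proof.
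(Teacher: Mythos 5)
Your proof is correct: the identity $\big\|g(\cdot\,y^{-1})\big\|_{L^p_r(G)}=\|g\|_{L^p_r(G)}$ follows from right-invariance of $\mu_r$ exactly as you say, and Minkowski's integral inequality (with the $p=\infty$ case handled directly) then yields the bound. The paper gives no proof of this proposition — it simply records the estimate as an instance of Young's inequality — so your argument is precisely the standard verification of that inequality, correctly adapted to the paper's right Haar measure convention.
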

	
	\subsection{Function-operator convolutions}
	Inspired by the notion of a shift for operators of the form
	$$
	\alpha_x(S) = \sigma(x)^* S \sigma(x)
	$$
	which moves a function in phase space by $x$, applies $S$ and then moves it back by $x^{-1}$, we have the following definition for function-operator convolutions.
	\begin{definition}
		Let $f \in L^1_r(G)$ and $S \in \mathcal{S}^1$, then the \emph{convolution} $f \star_G S$ is defined as the operator on $\mathcal{H}$ given by
		$$
		f \star_{G} S = \int_{G} f(x) \sigma(x)^* S \sigma(x) \dmr(x).
		$$
		This operator acts weakly in the way described in Section \ref{sec:vector_integration}, i.e. as
    	\begin{align*}
    		\big\langle f \star_G S \psi, \phi \big\rangle = \int_G f(x) \langle \sigma(x)^*S \sigma(x)\psi, \phi \rangle \dmr(x)
    	\end{align*}
    	and we define $S \star_G f = f \star_G S$.
	\end{definition}
	
	\begin{remark}
	    In an upcoming paper by the author in collaboration with Feichtinger and Luef, the function-operator convolution defined above is realized as a special case of measure-operator convolutions and then the description of  the weak action becomes a theorem, not a definition.
	\end{remark}
	The following boundedness property of function-operator convolutions is an important result which will be used extensively. It is somewhat analogous to the $p = 1$ case of Proposition \ref{prop:function_convolution_L1_estimate} and the corresponding statement for the $p > 1$ range is proved in Section \ref{sec:convol_mapping_properties}.
	\begin{proposition}\label{prop:function_operator_convolution_trace_class}
		Let $f \in L^1_r(G)$ and $S \in \mathcal{S}^1$. Then we have
		$$
		\Vert f \star_G S \Vert_{\mathcal{S}^1} \leq \Vert f \Vert_{L^1_r(G)} \Vert S \Vert_{\mathcal{S}^1}.
		$$
	\end{proposition}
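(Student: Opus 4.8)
The plan is to reduce the bound on $\|f \star_G S\|_{\mathcal{S}^1}$ to a scalar integral estimate by pairing $f \star_G S$ against an arbitrary bounded operator and using the duality $(\mathcal{S}^1)^* = \mathcal{S}^\infty$. Concretely, I would fix $A \in B(\mathcal{H})$ with $\|A\|_{\mathcal{S}^\infty} \le 1$ and compute $\tr\big((f \star_G S)A\big)$ by moving the trace inside the weak operator integral. Since $H(x) = f(x)\,\sigma(x)^* S \sigma(x)$ is of the form covered by the vector-valued integration in Section \ref{sec:vector_integration} (here $F(x) = \sigma(x)^* S \sigma(x)$ is bounded and strongly continuous by square integrability of $\sigma$ and $\|F(x)\|_{\mathcal{S}^\infty} \le \|S\|_{\mathcal{S}^1}$), one expects
\begin{align*}
\tr\big((f\star_G S)A\big) = \int_G f(x)\,\tr\big(\sigma(x)^* S\sigma(x)\,A\big)\dmr(x).
\end{align*}
Then $|\tr(\sigma(x)^* S\sigma(x)A)| \le \|\sigma(x)^* S\sigma(x)\|_{\mathcal{S}^1}\|A\|_{\mathcal{S}^\infty} = \|S\|_{\mathcal{S}^1}\|A\|_{\mathcal{S}^\infty}$, using unitary invariance of the Schatten-$1$ norm, so the integral is bounded by $\|f\|_{L^1_r(G)}\|S\|_{\mathcal{S}^1}$. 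Taking the supremum over such $A$ gives $\|f\star_G S\|_{\mathcal{S}^1} \le \|f\|_{L^1_r(G)}\|S\|_{\mathcal{S}^1}$.

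The main obstacle is justifying the interchange of trace and integral, i.e.\ that $\tr$ (a continuous functional on $\mathcal{S}^1$, but not on $B(\mathcal{H})$) commutes with the weakly-defined operator integral, and indeed that $f \star_G S$ is a priori trace class at all rather than merely bounded. I would handle this by first observing that $f \star_G S$ is well-defined as a bounded operator with $\|f \star_G S\|_{\mathcal{S}^\infty} \le \|f\|_{L^1_r(G)}\|S\|_{\mathcal{S}^1}$ directly from the weak definition, and then argue that it lies in $\mathcal{S}^1$ with the stated bound by a limiting/approximation argument: approximate $f$ by simple functions $f_n = \sum_k c_k^{(n)} \chi_{E_k^{(n)}}$ in $L^1_r(G)$, for which $f_n \star_G S$ is a (weak) integral that can be controlled, or alternatively use that for finitely supported sums of point masses the convolution is a finite sum $\sum_k c_k \mu_r(E_k) \sigma(x_k)^* S \sigma(x_k)$, manifestly in $\mathcal{S}^1$ with norm $\le \sum_k |c_k|\mu_r(E_k)\|S\|_{\mathcal{S}^1}$. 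A cleaner route, which I would prefer, is: for any finite orthonormal set $\{e_1,\dots,e_N\}$ and any choice of signs/phases, $\sum_{j=1}^N \varepsilon_j \langle (f\star_G S)e_j, e_j\rangle = \int_G f(x) \sum_j \varepsilon_j \langle \sigma(x)^* S\sigma(x)e_j, e_j\rangle \dmr(x)$, and the inner sum is bounded in modulus by $\|\sigma(x)^* S\sigma(x)\|_{\mathcal{S}^1} = \|S\|_{\mathcal{S}^1}$; optimizing over $\{e_j\}$ and phases and using the characterization of the trace norm via $\|T\|_{\mathcal{S}^1} = \sup \sum_j |\langle Te_j, f_j\rangle|$ over orthonormal systems yields the bound and trace-class membership simultaneously.

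I would write it using this last variational characterization of $\|\cdot\|_{\mathcal{S}^1}$ to avoid delicate Fubini-for-Bochner-integrals technicalities, citing \cite[Sec.~2.3]{Luef2018} for the properties of the vector-valued integral as is done earlier in the excerpt. The unitarity of $\sigma(x)$, giving $\|\sigma(x)^* S\sigma(x)\|_{\mathcal{S}^1} = \|S\|_{\mathcal{S}^1}$ pointwise in $x$, is the one genuinely representation-theoretic ingredient, and everything else is soft functional analysis plus Fubini/Tonelli for scalar integrands. The strong continuity of $x \mapsto \sigma(x)^* S\sigma(x)$ (needed so the weak integral is defined) follows from strong continuity of $\sigma$ together with $S \in \mathcal{S}^1 \subset $ compact operators, which I would note in passing.
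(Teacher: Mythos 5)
Your proposal is correct and follows essentially the same route as the paper: bound the pairing of $f \star_G S$ against a norm-one bounded (or finite orthonormal system) test object, expand the trace over an orthonormal basis, interchange sum and integral by Tonelli for the scalar integrand, and use unitary invariance of the $\mathcal{S}^1$-norm (the paper cites \cite[Thm.~18.11]{conway2000a} for the pointwise bound $\sum_n |\langle T^*\sigma(x)^*S\sigma(x)e_n,e_n\rangle| \le \Vert S \Vert_{\mathcal{S}^1}$). Your extra care about a priori trace-class membership via the variational characterization of $\Vert \cdot \Vert_{\mathcal{S}^1}$ over orthonormal systems is a sound refinement of the same argument, not a different approach.
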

	\begin{proof}
	    We control the trace-class norm of $f \star_G S$ by bounding $|\langle f \star_G S, T\rangle|$ for $T \in B(\mathcal{H})$ with $\Vert T \Vert_{B(\mathcal{H})} = 1$ as
	    \begin{align*}
    	     |\langle f \star_G S, T\rangle| &= |\tr\big((f \star_G S) T^*\big)|\\
    	     &= \left| \sum_n \big\langle (f \star_G S) e_n, T e_n\big\rangle \right|\\
    	     &\leq \sum_n \int_G |f(x)| |\langle \sigma(x)^* S \sigma(x)e_n, T e_n \rangle| \dmr(x)\\
    	     &= \int_G |f(x)| \sum_n |\langle T^* \sigma(x)^* S \sigma(x) e_n, e_n \rangle| \dmr(x)
	    \end{align*}
	    where we used in the last step Tonelli's Theorem. For each $x$, the above sum can be bounded by $\Vert S \Vert_{\mathcal{S}^1}$ using \cite[Thm. 18.11]{conway2000a}. Hence
	    $$
	    |\langle f \star_G S, T\rangle| \leq \int_G |f(x)| \Vert S \Vert_{\mathcal{S}^1} \dmr(x) = \Vert f \Vert_{L^1_r(G)} \Vert S \Vert_{\mathcal{S}^1}
	    $$
	    as desired.
	\end{proof}
	In the same way that the integral over a function-function convolution can be decoupled, the trace of a function-operator convolution may be written as a product in the following way.
	\begin{proposition}\label{prop:trace_of_func_op_conv}
	Let $f \in L^1_r(G)$ and $S \in \mathcal{S}^1$. Then 
	$$
	\tr(f \star_G S) = \tr(S)\int_G f(x) \dmr(x).
	$$
	\end{proposition}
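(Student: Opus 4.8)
The plan is to expand the trace against a fixed orthonormal basis, feed in the weak definition of the operator-valued integral from Section \ref{sec:vector_integration}, interchange the sum over the basis with the integral over $G$, and then recognize the inner sum as the trace of a unitary conjugate of $S$, which is just $\tr(S)$. Concretely, fix an orthonormal basis $\{e_n\}_n$ of $\mathcal{H}$. Proposition \ref{prop:function_operator_convolution_trace_class} guarantees $f \star_G S \in \mathcal{S}^1$, so its trace is well-defined and independent of the basis, and
\[
\tr(f \star_G S) = \sum_n \big\langle (f \star_G S) e_n, e_n \big\rangle = \sum_n \int_G f(x)\, \langle \sigma(x)^* S \sigma(x) e_n, e_n \rangle \dmr(x),
\]
where the second equality is exactly the weak action of $f \star_G S$ on the pair $(e_n, e_n)$.

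Next I would justify swapping $\sum_n$ and $\int_G$ by Fubini's theorem. Applying \cite[Thm. 18.11]{conway2000a} (equivalently, taking $T = \operatorname{Id}$ in the estimates appearing in the proof of Proposition \ref{prop:function_operator_convolution_trace_class}), for every fixed $x \in G$ we have $\sum_n |\langle \sigma(x)^* S \sigma(x) e_n, e_n \rangle| \leq \Vert \sigma(x)^* S \sigma(x) \Vert_{\mathcal{S}^1} = \Vert S \Vert_{\mathcal{S}^1}$, so that
\[
\int_G |f(x)| \sum_n |\langle \sigma(x)^* S \sigma(x) e_n, e_n \rangle| \dmr(x) \leq \Vert f \Vert_{L^1_r(G)} \Vert S \Vert_{\mathcal{S}^1} < \infty.
\]
Hence the iterated sum–integral converges absolutely and the interchange is legitimate, leaving $\tr(f \star_G S) = \int_G f(x) \sum_n \langle \sigma(x)^* S \sigma(x) e_n, e_n \rangle \dmr(x) = \int_G f(x)\, \tr\big(\sigma(x)^* S \sigma(x)\big) \dmr(x)$.

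Finally, since $\sigma(x)$ is unitary, cyclicity of the trace gives $\tr(\sigma(x)^* S \sigma(x)) = \tr(S \sigma(x) \sigma(x)^*) = \tr(S)$ for every $x \in G$, and pulling this constant out of the integral yields
\[
\tr(f \star_G S) = \tr(S) \int_G f(x) \dmr(x),
\]
as claimed. The only step requiring genuine care is the Fubini interchange; once the absolute-integrability bound above is in place (which is essentially the bound already proved for Proposition \ref{prop:function_operator_convolution_trace_class}), the remainder is the elementary unitary-invariance of the trace.
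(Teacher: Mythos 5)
Your proposal is correct and follows essentially the same route as the paper's proof: expand the trace against a fixed orthonormal basis via the weak definition, justify the sum–integral interchange by Tonelli with the bound $\sum_n |\langle \sigma(x)^* S \sigma(x) e_n, e_n\rangle| \leq \Vert S \Vert_{\mathcal{S}^1}$, and use unitary invariance of the trace. The only cosmetic difference is that you invoke cyclicity of the trace where the paper observes that $\{\sigma(x)e_n\}_n$ is again an orthonormal basis; these are interchangeable.
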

	\begin{proof}
    We compute
    \begin{align*}
        \tr(f \star_G S) &= \sum_n \langle (f \star S) e_n, e_n \rangle\\
        &= \sum_n \int_G f(x)\langle (\sigma(x)^* S \sigma(x))e_n, e_n \rangle\dmr(x).
    \end{align*}
    By Tonelli's Theorem we have that
    \begin{align*}
        \sum_n \int_G |f(x)\langle \sigma(x)^* S \sigma(x)e_n, e_n \rangle| \dmr(x) &= \int_G \sum_n |f(x) \langle S \sigma(x)e_n, \sigma(x) e_n \rangle| \dmr(x)\\
        &\leq \int_G |f(x)| \dmr(x) \sum_n |\langle S \sigma(x)e_n, \sigma(x)e_n\rangle|
    \end{align*}
    where the first factor is finite by the integrability of $f$. The finiteness of the second factor follows by \cite[Prop. 18.9]{conway2000a}. We can now finish the computation as
    \begin{align*}
        \tr(f \star_G S) &= \int_G f(x) \sum_n \langle S \sigma(x) e_n, \sigma(x)e_n \rangle \dmr(x)\\
        &= \tr(S) \int_G f(x)\dmr(x).
    \end{align*}
	\end{proof}
    Lastly we show that function-operator convolutions preserve positivity.	
	\begin{lemma}\label{lemma:pos_pos_func_op_conv}
	    If $f \in L^1_r(G)$ is non-negative and $S \in \mathcal{S}^1$ is positive, then so is $f \star_G S$.
	\end{lemma}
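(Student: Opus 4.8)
The plan is to verify positivity directly from the weak definition of the operator-valued integral. First, note that $f\star_G S$ is a well-defined element of $\mathcal{S}^1$ by Proposition \ref{prop:function_operator_convolution_trace_class}, so in particular it is a bounded operator and it makes sense to ask whether it is positive. Since positivity of a bounded operator $A$ is equivalent to $\langle A\psi,\psi\rangle\ge 0$ for all $\psi\in\mathcal{H}$, it suffices to check this inequality.

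Next I would fix $\psi\in\mathcal{H}$ and apply the weak description of the convolution from Section \ref{sec:vector_integration} with $\phi=\psi$, giving
$$
\big\langle (f\star_G S)\psi,\psi\big\rangle = \int_G f(x)\,\big\langle \sigma(x)^*S\sigma(x)\psi,\psi\big\rangle\dmr(x).
$$
Here the integrand is genuinely integrable: since $\sigma(x)$ is unitary, $|\langle \sigma(x)^*S\sigma(x)\psi,\psi\rangle|\le \Vert S\Vert_{B(\mathcal{H})}\Vert\psi\Vert^2$, so the integrand is dominated by $|f(x)|\,\Vert S\Vert_{B(\mathcal{H})}\Vert\psi\Vert^2$, which lies in $L^1_r(G)$ because $f\in L^1_r(G)$.

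Then I would observe that, using unitarity of $\sigma(x)$,
$$
\big\langle \sigma(x)^*S\sigma(x)\psi,\psi\big\rangle = \big\langle S\,\sigma(x)\psi,\,\sigma(x)\psi\big\rangle \ge 0
$$
for every $x\in G$, since $S$ is positive; in particular this quantity is real, so the integrand $f(x)\langle \sigma(x)^*S\sigma(x)\psi,\psi\rangle$ is a non-negative real-valued function of $x$ because $f\ge 0$. Integrating a non-negative integrable function over $G$ against the (positive) right Haar measure yields a non-negative number, hence $\langle (f\star_G S)\psi,\psi\rangle\ge 0$. As $\psi$ was arbitrary, $f\star_G S$ is positive.

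I do not expect any real obstacle here; the only point requiring a little care is that one is working with a weakly-defined (Bochner) operator-valued integral rather than an honest pointwise-defined operator, but the weak formula from Section \ref{sec:vector_integration} reduces everything to a scalar integral of a non-negative function, so no domination or limiting argument beyond the one above is needed.
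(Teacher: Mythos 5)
Your proof is correct and follows essentially the same route as the paper: evaluate $\langle (f\star_G S)\psi,\psi\rangle$ via the weak definition, use unitarity of $\sigma(x)$ to rewrite the integrand as $f(x)\langle S\sigma(x)\psi,\sigma(x)\psi\rangle\ge 0$, and conclude. The extra remarks on integrability are fine but not needed beyond what the paper's weak definition already provides.
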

	\begin{proof}
    	We verify this directly as
    	\begin{align*}
    		\left\langle (f \star_G S) \phi, \phi\right\rangle &= \int_G f(x) \langle \sigma(x)^* S \sigma(x) \phi, \phi\rangle \dmr(x)\\
    		&= \int_G f(x) \langle S \sigma(x) \phi, \sigma(x) \phi \rangle \dmr(x) \geq 0.
    	\end{align*}
	\end{proof}
	
    \subsection{Operator-operator convolutions}
    A central theme in quantum harmonic analysis is that when replacing functions by operators, integrals should be replaced by traces. This motivates the following definition of operator-operator convolutions.
	\begin{definition}
	    Let $T \in \mathcal{S}^1$ and $S \in B(\mathcal{H})$, then the \emph{convolution} $T \star_G S$ is the function on $G$ given by
		$$
		T \star_G S (x) = \tr(T \sigma(x)^* S \sigma(x)).
		$$
	\end{definition}
    The following lemma is an example of an operator-operator convolution.
	\begin{lemma}\label{lemma:rank_one_convolved_with_operator}
		For $\psi, \phi \in \mathcal{H}$ and $S \in B(\mathcal{H})$ we have
		$$
		(\psi \otimes \phi) \star_G S(x) = \big\langle S \sigma(x) \psi, \sigma(x)\phi \big\rangle.
		$$
	\end{lemma}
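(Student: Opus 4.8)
The plan is to reduce the identity to the elementary fact that $\tr\big((\psi\otimes\phi)A\big)=\langle A\psi,\phi\rangle$ for every $A\in B(\mathcal{H})$, combined with the unitarity of $\sigma(x)$. First I would note that $\psi\otimes\phi$ is a rank-one operator, hence lies in $\mathcal{S}^1$, so the operator-operator convolution $(\psi\otimes\phi)\star_G S$ is well defined and, by definition, equals the function $x\mapsto\tr\big((\psi\otimes\phi)\,\sigma(x)^*S\sigma(x)\big)$.

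Next, fixing $x$ and writing $A=\sigma(x)^*S\sigma(x)\in B(\mathcal{H})$, I would use cyclicity of the trace (legitimate since $\psi\otimes\phi\in\mathcal{S}^1$ and $A$ is bounded) to rewrite $\tr\big((\psi\otimes\phi)A\big)=\tr\big(A(\psi\otimes\phi)\big)$. From the defining relation $(\psi\otimes\phi)\xi=\langle\xi,\phi\rangle\psi$ one gets $A(\psi\otimes\phi)=(A\psi)\otimes\phi$, and since the trace of a rank-one operator $u\otimes v$ is $\langle u,v\rangle$, this yields $\tr\big(A(\psi\otimes\phi)\big)=\langle A\psi,\phi\rangle$. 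Therefore $(\psi\otimes\phi)\star_G S(x)=\langle\sigma(x)^*S\sigma(x)\psi,\phi\rangle$, and moving the adjoint across the inner product gives $\langle\sigma(x)^*S\sigma(x)\psi,\phi\rangle=\langle S\sigma(x)\psi,\sigma(x)\phi\rangle$, which is the claimed formula. Alternatively, one can avoid invoking cyclicity by expanding $\tr\big((\psi\otimes\phi)A\big)=\sum_n\langle(\psi\otimes\phi)Ae_n,e_n\rangle=\sum_n\langle Ae_n,\phi\rangle\langle\psi,e_n\rangle$ over an orthonormal basis $\{e_n\}_n$ and recognizing the last sum as $\langle A\psi,\phi\rangle$ via the expansion $\psi=\sum_n\langle\psi,e_n\rangle e_n$.

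I do not expect any genuine obstacle here; the statement is essentially a bookkeeping exercise with rank-one operators. The only points deserving a word of care are the membership $\psi\otimes\phi\in\mathcal{S}^1$ and the use of cyclicity of the trace against a merely bounded operator, both of which are among the standard facts on Schatten classes recalled in Section \ref{sec:preliminaries}.
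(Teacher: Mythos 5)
Your proposal is correct and follows essentially the same route as the paper: the paper also computes $\tr\big((\psi\otimes\phi)\sigma(x)^*S\sigma(x)\big)$ by expanding over an orthonormal basis and recognizing the resulting sum as $\langle\psi,\sigma(x)^*S^*\sigma(x)\phi\rangle=\langle S\sigma(x)\psi,\sigma(x)\phi\rangle$, which is exactly your alternative argument. Your primary variant via cyclicity of the trace is only a cosmetic difference and is equally valid.
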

	\begin{proof}
		We compute
		\begin{align*}
			(\psi \otimes \phi) \star_G S(x) &= \tr((\psi \otimes \phi) \sigma(x)^* S \sigma(x))\\
			&= \sum_n \Big\langle \langle \sigma(x)^* S \sigma(x) e_n, \phi\rangle \psi, e_n\Big\rangle\\
			&= \langle \psi, \sigma(x)^* S^* \sigma(x) \phi \rangle\\
			&= \langle S \sigma(x)\psi, \sigma(x)\phi\rangle.
		\end{align*}
	\end{proof}
	\begin{lemma}\label{lemma:sum_op_conv_rank_one}
	    Let $S\in \mathcal{S}^1$ and $\{ \xi_n \}_n$ be an orthonormal basis of $\mathcal{H}$, then
	    $$
	    \sum_n (\xi_n \otimes \xi_n) \star_G S(x) = \tr(S).
	    $$
	\end{lemma}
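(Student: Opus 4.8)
The plan is to reduce the statement to the basis-independence of the trace by invoking Lemma~\ref{lemma:rank_one_convolved_with_operator}. First I would apply that lemma to each summand, writing $(\xi_n \otimes \xi_n) \star_G S(x) = \langle S\sigma(x)\xi_n, \sigma(x)\xi_n\rangle$, so that the left-hand side of the claimed identity becomes $\sum_n \langle S\sigma(x)\xi_n, \sigma(x)\xi_n\rangle$.

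Next, since $\sigma(x)$ is unitary, the family $\{\sigma(x)\xi_n\}_n$ is again an orthonormal basis of $\mathcal{H}$. Setting $\eta_n := \sigma(x)\xi_n$, the sum is exactly $\sum_n \langle S\eta_n, \eta_n\rangle$, which by the definition of the trace together with its independence of the chosen orthonormal basis -- valid because $S \in \mathcal{S}^1$, cf. \cite[Prop. 18.9]{conway2000a} -- equals $\tr(S)$. This would complete the proof.

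The argument is short, and the only point requiring attention is the convergence of the series: for a trace-class operator $S$ the sum $\sum_n \langle S e_n, e_n\rangle$ converges and is basis-independent, which is precisely the fact already recorded in the preliminaries and used in the proofs of Propositions~\ref{prop:function_operator_convolution_trace_class} and~\ref{prop:trace_of_func_op_conv}. Hence I expect no genuine obstacle here beyond correctly citing this standard fact; everything else follows immediately from Lemma~\ref{lemma:rank_one_convolved_with_operator} and the unitarity of $\sigma(x)$.
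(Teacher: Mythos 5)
Your proposal is correct and follows exactly the paper's argument: apply Lemma~\ref{lemma:rank_one_convolved_with_operator} to each summand and use that $\{\sigma(x)\xi_n\}_n$ is again an orthonormal basis (by unitarity of $\sigma(x)$), so the sum is $\tr(S)$ by basis-independence of the trace. Your added remark on convergence of the series for $S \in \mathcal{S}^1$ is a sensible, if routine, elaboration of the same proof.
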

	\begin{proof}
	    This follows directly from Lemma  \ref{lemma:rank_one_convolved_with_operator} and the fact that $\{ \sigma(x)\xi_n \}_n$ is an orthonormal basis.
	\end{proof}
	We also have the following estimate which follows by standard properties of trace norms, see \cite[Thm. 18.11 (g)]{conway2000a} for a proof.
	\begin{lemma}\label{lemma:operator_convolution_infinity_estimate}
		Let $T \in \mathcal{S}^1$ and $S \in B(\mathcal{H})$. Then
		$$
		\Vert T \star_G S\Vert_{L^\infty(G)} \leq \Vert T \Vert_{\mathcal{S}^1} \Vert S \Vert_{B(\mathcal{H})}.
		$$
	\end{lemma}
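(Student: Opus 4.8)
The plan is to reduce everything to a pointwise estimate and then take a supremum. Fix $x \in G$. Since $\sigma$ is a unitary representation, $\sigma(x)$ is a unitary operator, and conjugation by a unitary leaves the operator norm unchanged, so that $\Vert \sigma(x)^* S \sigma(x) \Vert_{B(\mathcal{H})} = \Vert S \Vert_{B(\mathcal{H})}$. Moreover, since $\mathcal{S}^1$ is a two-sided ideal in $B(\mathcal{H})$, the product $T\sigma(x)^* S \sigma(x)$ lies in $\mathcal{S}^1$, so its trace is well-defined and the convolution $T \star_G S(x) = \tr(T\sigma(x)^* S \sigma(x))$ makes sense for every $x$.

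Next I would apply the H\"older-type inequality for Schatten classes, namely that $|\tr(TA)| \le \Vert T \Vert_{\mathcal{S}^1}\Vert A \Vert_{B(\mathcal{H})}$ for $T \in \mathcal{S}^1$ and $A \in B(\mathcal{H})$, which is precisely the content of \cite[Thm. 18.11 (g)]{conway2000a}. Taking $A = \sigma(x)^* S \sigma(x)$ yields
\[
|T \star_G S(x)| = \big|\tr\big(T\sigma(x)^* S \sigma(x)\big)\big| \le \Vert T \Vert_{\mathcal{S}^1}\,\Vert \sigma(x)^* S \sigma(x)\Vert_{B(\mathcal{H})} = \Vert T \Vert_{\mathcal{S}^1}\,\Vert S \Vert_{B(\mathcal{H})}.
\]
Since the right-hand side does not depend on $x$, taking the supremum over $x \in G$ gives $\Vert T \star_G S\Vert_{L^\infty(G)} \le \Vert T \Vert_{\mathcal{S}^1}\Vert S \Vert_{B(\mathcal{H})}$, as claimed.

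There is no real obstacle here; the statement is essentially a packaging of unitary invariance of the operator norm together with the standard trace estimate. The only point requiring a moment's care is well-definedness of the trace, handled above via the ideal property of $\mathcal{S}^1$. If one prefers not to cite the Schatten H\"older inequality in this exact form, one can instead cyclically rewrite the trace as $\tr(\sigma(x)T\sigma(x)^* S)$ and bound it by $\Vert \sigma(x)T\sigma(x)^*\Vert_{\mathcal{S}^1}\Vert S\Vert_{B(\mathcal{H})} = \Vert T\Vert_{\mathcal{S}^1}\Vert S\Vert_{B(\mathcal{H})}$, using that conjugation by a unitary also preserves the trace norm; either route closes the argument.
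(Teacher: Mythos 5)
Your argument is correct and is essentially what the paper does: it cites the same trace estimate $|\tr(TA)| \le \Vert T\Vert_{\mathcal{S}^1}\Vert A\Vert_{B(\mathcal{H})}$ from \cite[Thm. 18.11 (g)]{conway2000a} and combines it implicitly with unitary invariance of the operator norm, which you have simply written out in full together with the supremum over $x$. No discrepancy to report.
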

	As to be expected, operator-operator convolutions preserve positivity in the same way as usual function-function convolutions and function-operator convolutions in Lemma \ref{lemma:pos_pos_func_op_conv}.
	\begin{lemma}\label{lemma:positive_operator_convolution_is_positive}
	    Let $T \in \mathcal{S}^1$ and $S \in B(\mathcal{H})$ both be positive. Then
	    $$
	    T \star_G S(x) \geq 0\qquad\text{for all}~~ x \in G.
	    $$

	\end{lemma}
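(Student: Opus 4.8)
The plan is to reduce the claim to the elementary fact that the trace of a product of two positive operators, one of which is trace class, is non-negative. First I would note that the operator shift $\alpha_x(S) = \sigma(x)^* S \sigma(x)$ preserves positivity: for every $\phi \in \mathcal{H}$ one has $\langle \alpha_x(S)\phi, \phi\rangle = \langle S\sigma(x)\phi, \sigma(x)\phi\rangle \geq 0$ since $S \geq 0$, exactly as in the proof of Lemma \ref{lemma:pos_pos_func_op_conv}. Therefore $T \star_G S(x) = \tr\big(T\,\alpha_x(S)\big)$ is the trace of a product of the positive trace-class operator $T$ and the positive bounded operator $\alpha_x(S)$.

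It then remains to check that $\tr(AB) \geq 0$ whenever $A \in \mathcal{S}^1$ and $B \in B(\mathcal{H})$ are both positive. Here I would write $A = A^{1/2}A^{1/2}$ with $A^{1/2} \in \mathcal{S}^2$ positive, use cyclicity of the trace to obtain $\tr(AB) = \tr\big(A^{1/2}BA^{1/2}\big)$, and observe that $A^{1/2}BA^{1/2}$ is again a positive trace-class operator, so its trace is a sum of non-negative diagonal entries and hence non-negative.

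Alternatively, and more in the spirit of the surrounding lemmas, I would invoke the spectral decomposition $T = \sum_n \lambda_n (\psi_n \otimes \psi_n)$ with all $\lambda_n \geq 0$ (available since $T$ is positive), apply Lemma \ref{lemma:rank_one_convolved_with_operator} termwise to get $(\psi_n \otimes \psi_n) \star_G S(x) = \langle S\sigma(x)\psi_n, \sigma(x)\psi_n\rangle \geq 0$, and then sum over $n$. The only point needing a word of justification in this route is the interchange of the summation over $n$ with the trace defining $T \star_G S(x)$; since every term $\lambda_n \langle S\sigma(x)\psi_n, \sigma(x)\psi_n\rangle$ is non-negative, this is legitimate by monotone convergence (or Tonelli), mirroring the convergence arguments already used in the proofs of Propositions \ref{prop:function_operator_convolution_trace_class} and \ref{prop:trace_of_func_op_conv}.

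The statement is not genuinely difficult, so there is no real obstacle beyond bookkeeping: the single non-formal ingredient is the positivity of $\tr(AB)$ for positive $A \in \mathcal{S}^1$ and $B \in B(\mathcal{H})$, which is immediate from $\tr(AB) = \tr\big(A^{1/2}BA^{1/2}\big)$ together with the positivity of $A^{1/2}BA^{1/2}$.
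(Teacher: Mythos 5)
Your proposal is correct, and in fact it contains two valid arguments. Your ``alternative'' route is essentially the paper's own proof: the paper expands $T$ in its spectral decomposition $T = \sum_m \lambda_m (e_m \otimes e_m)$, computes the trace in that same basis, and arrives at $T \star_G S(x) = \sum_n \lambda_n \langle S\sigma(x)e_n, \sigma(x)e_n\rangle \geq 0$, which is exactly your termwise use of Lemma \ref{lemma:rank_one_convolved_with_operator}; your remark that the interchange of sum and trace is justified by non-negativity of the terms is a fair (if slightly more careful) gloss on what the paper does implicitly. Your primary route is genuinely different: it isolates the abstract fact that $\tr(AB) \geq 0$ for positive $A \in \mathcal{S}^1$ and $B \in B(\mathcal{H})$, proved via $\tr(AB) = \tr\bigl(A^{1/2}BA^{1/2}\bigr)$, and then only needs that $\alpha_x(S)$ is positive. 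This is sound --- $A^{1/2} \in \mathcal{S}^2$ and $A^{1/2}B \in \mathcal{S}^2$, so cyclicity of the trace applies and $A^{1/2}BA^{1/2}$ is a positive trace-class operator --- and it buys a cleaner, reusable statement that avoids any discussion of convergence of the spectral sum, at the cost of invoking the square root and cyclicity rather than the bare-hands basis computation the paper prefers, which stays closer in spirit to the surrounding lemmas.
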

	\begin{proof}
	    We expand $T$ in its singular value decomposition and compute the trace with the same basis to find
	    \begin{align*}
	        T \star_G S(x) = \tr(T \sigma(x)^* S \sigma(x)) &= \sum_n \left\langle \sum_m \lambda_m (e_m \otimes e_m)(\sigma(x)^* S \sigma(x) e_n), e_n \right\rangle\\
	        &= \sum_{n,m} \lambda_m \langle (\sigma(x)^* S \sigma(x)) e_n, e_m \rangle \langle e_m, e_n \rangle\\
	        &= \sum_n \lambda_n \langle S \sigma(x)e_n, \sigma(x) e_n\rangle \geq 0,
	    \end{align*}
	    where we used that the eigenvalues of a positive operator are non-negative.
	\end{proof}
	Lastly, we show that all the convolutions introduced in this section are associative in an appropriate manner.
	\begin{proposition}\label{prop:compatibility_relations}
		Let $f, g \in L^1_r(G),\, T \in \mathcal{S}^1$, and let $S$ be a bounded operator on $\mathcal{H}$. Then the following compatibility relations hold
		\begin{align*}
			(f \star_G T) \star_G S &= f *_G (T \star_G S),\\    
			f \star_G (g \star_G T) &= (f *_G g) \star_G T.
		\end{align*}
	\end{proposition}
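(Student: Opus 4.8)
The plan is to unwind every convolution to a scalar expression — testing the operator-valued identity against vectors $\psi,\phi\in\mathcal H$ and evaluating the scalar-valued identity at a point $x\in G$ — then to interchange the orders of summation and integration using the uniform bounds already at hand, and finally to close the argument with two purely algebraic facts: that the shift satisfies $\alpha_x\circ\alpha_y = \alpha_{yx}$ (because $\sigma$ is a homomorphism, so $\sigma(x)^*\sigma(y)^* = \sigma(yx)^*$), and that $\mu_r$ is right-invariant, i.e. $\int_G h(y)\dmr(y) = \int_G h(yx^{-1})\dmr(y)$ for every fixed $x\in G$.

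For the first identity both sides are functions on $G$, so I evaluate at $x\in G$. The left side is $\tr\big((f\star_G T)\alpha_x(S)\big)$; writing $f\star_G T = \int_G f(y)\alpha_y(T)\dmr(y)$ weakly and noting that for each fixed $y$ one has $\sum_n\big|\langle\alpha_y(T)\alpha_x(S)e_n,e_n\rangle\big|\le \|\alpha_y(T)\alpha_x(S)\|_{\mathcal S^1}\le\|T\|_{\mathcal S^1}\|S\|_{B(\mathcal H)}$ by \cite[Prop.~18.9]{conway2000a}, Fubini lets me pull the trace inside the $y$-integral to get $\int_G f(y)\tr(\alpha_y(T)\alpha_x(S))\dmr(y)$. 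Cyclicity of the trace together with $\sigma(y)\sigma(x)^* = \sigma(yx^{-1})$ rewrites $\tr(\alpha_y(T)\alpha_x(S))$ as $\tr\big(T\,\alpha_{xy^{-1}}(S)\big) = (T\star_G S)(xy^{-1})$, so the left side equals $\int_G f(y)(T\star_G S)(xy^{-1})\dmr(y) = f*_G(T\star_G S)(x)$; here $T\star_G S\in L^\infty(G)$ by Lemma \ref{lemma:operator_convolution_infinity_estimate} and $f\in L^1_r(G)$, so Proposition \ref{prop:function_convolution_L1_estimate} guarantees the function-function convolution on the right is meaningful.

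For the second identity both sides lie in $\mathcal S^1$, so I pair with $\psi,\phi\in\mathcal H$. A short weak computation shows $\alpha_x(g\star_G T)$ acts as $\int_G g(y)\alpha_{yx}(T)\dmr(y)$ — expand $\langle\alpha_x(g\star_G T)\psi,\phi\rangle = \langle (g\star_G T)\sigma(x)\psi,\sigma(x)\phi\rangle$, insert the weak definition of $g\star_G T$, move $\sigma(x)$ back inside, and apply $\alpha_x\circ\alpha_y=\alpha_{yx}$ — whence $\langle\big(f\star_G(g\star_G T)\big)\psi,\phi\rangle = \int_G\int_G f(x)g(y)\langle\alpha_{yx}(T)\psi,\phi\rangle\dmr(y)\dmr(x)$. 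This double integral is absolutely convergent since $|\langle\alpha_{yx}(T)\psi,\phi\rangle|\le\|T\|_{B(\mathcal H)}\|\psi\|\|\phi\|$ and $f,g\in L^1_r(G)$, so Fubini applies. In the inner integral I substitute $z=yx$ and use right-invariance of $\mu_r$ to replace $g(y)$ by $g(zx^{-1})$ and $\alpha_{yx}(T)$ by $\alpha_z(T)$; one more Fubini then collapses the $x$-integral into $\int_G f(x)g(zx^{-1})\dmr(x) = (f*_G g)(z)$, leaving $\int_G (f*_G g)(z)\langle\alpha_z(T)\psi,\phi\rangle\dmr(z)$, which is exactly the weak action of $(f*_G g)\star_G T$ (well-defined because $f*_G g\in L^1_r(G)$ by Proposition \ref{prop:function_convolution_L1_estimate} and $T\in\mathcal S^1$).

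I expect no step to be technically deep; the only place demanding genuine care is the bookkeeping of group elements in a non-abelian, non-unimodular $G$ — getting the order correct in $\alpha_x\circ\alpha_y=\alpha_{yx}$, choosing the right variable for the right-invariant change of variables, and matching the $xy^{-1}$ convention of $*_G$ — and checking that each Fubini/Tonelli interchange is licensed by the uniform trace-norm and operator-norm bounds supplied by Proposition \ref{prop:function_operator_convolution_trace_class} and Lemma \ref{lemma:operator_convolution_infinity_estimate}.
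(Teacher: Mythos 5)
Your proof is correct and follows essentially the same route as the paper: unwind the convolutions, use cyclicity of the trace together with $\sigma(xy^{-1})^*S\sigma(xy^{-1})=\sigma(y)\big(\sigma(x)^*S\sigma(x)\big)\sigma(y)^*$ for the first identity, and the change of variables $z=yx$ with right-invariance of $\mu_r$ plus the composition law $\alpha_x\circ\alpha_y=\alpha_{yx}$ for the second. The only difference is that you spell out the weak pairings and the Fubini/Tonelli justifications, which the paper leaves implicit.
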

	\begin{proof}
		We proceed by direct computation
		\begin{align*}
			f *_G (T \star_G S) &= \int_G f(y) \tr\big(T \sigma(x y^{-1})^* S \sigma(x y^{-1})\big) \dmr(y)\\
			&= \int_G f(y) \tr\big(\sigma(y)^*T \sigma(y) \sigma(x)^* S \sigma(x)\big) \dmr(y)\\
			&= \tr\left( \left(\int_G f(y) \sigma(y)^* T \sigma(y) \dmr(y)\right) \sigma(x)^* S \sigma(x)  \right)\\
			&= \tr\big( (f \star_G T) \sigma(x)^* S \sigma(x) \big)\\
			&= (f \star_G T) \star_G S.
		\end{align*}
		For the other equality, we have
		\begin{align*}
			(f *_G g) \star_G T &= \int_G \left(\int_G f(x) g(z x^{-1}) \dmr(x) \right) \sigma(z)^*T \sigma(z) \dmr(z)\\
			&=\int_G \int_G f(x) g(z x^{-1}) \sigma(z)^*T \sigma(z) \dmr(z) \dmr(x)
		\end{align*}
		and so applying the change of variables $y = z x^{-1}$, we find
		\begin{align*}
			(f *_G g) \star_G T &= \int_G \int_G f(x) g(y) \sigma(yx)^*T \sigma(yx) \dmr(y) \dmr(x)\\
			&=\int_G f(x) \sigma(x)^* \left(\int_G g(y)\sigma(y)^*T \sigma(y)\dmr(y) \right) \sigma(x) \dmr(x)\\
			&=\int_G f(x) \sigma(x)^* \left(g \star_G T \right) \sigma(x) \dmr(x)\\
			&=f \star_G (g \star_G T).
		\end{align*}
	\end{proof}
	We have not established all the mapping properties of function-operator and operator-operator convolutions between $L^p_r(G)$ and $\mathcal{S}^p$ spaces, since this requires some more preparation which is contained in the next section, in particular \ref{sec:convol_mapping_properties}. 

	\section{Admissibility of operators}\label{sec:admissibility}
	\subsection{Integrability of operator-operator convolutions}
	As seen in \cite{Berge2022}, to generalize some results on operator convolutions to non-unimodular groups, we need to introduce a notion of \emph{admissibility of operators}. The definition is motivated by the desire for $T \star_G S$ to be integrable. Before stating such a result, we recall the following classical theorem from \cite{Duflo1976} which we write out using the right Haar measure convention.
	\begin{theorem}[Duflo-Moore]\label{thm:duflo_moore}
		Let $(\sigma, \mathcal{H})$ be a square integrable, irreducible, unitary representation of a locally compact group $G$. Then there exists a unique, possibly unbounded, densly defined, positive, closed, self-adjoint operator $\mathcal{D}^{-1} : \mathcal{H} \to \mathcal{H}$ with densly defined inverse $\D$ such that:
		\begin{enumerate}[label=(\roman*)]
		    \item The admissble vectors $\psi \in \mathcal{H}$ are exactly those vectors in the domain of $\mathcal{D}^{-1}$.
		    \item For $\phi_1, \phi_2 \in \mathcal{H}$ and $\psi_1, \psi_2 \in \operatorname{Dom}(\D^{-1})$, the following orthogonality relation holds
    		\begin{align}\label{eq:duflo_ortho_right}
    			\int_G \langle \phi_1, \sigma(x)^* \psi_1 \rangle \overline{\langle \phi_2, \sigma(x)^* \psi_2 \rangle} \dmr(x) = \langle \phi_1, \phi_2\rangle \overline{\langle \D^{-1} \psi_1, \D^{-1} \psi_2\rangle}.
    		\end{align}
    		\item The following covariance relation holds
    		\begin{align}\label{eq:duflo_moore_covariance_relation}
    		    \sigma(x) \D \sigma(x)^* = \sqrt{\Delta_G(x)} \D.
    		\end{align}
		\end{enumerate}
	\end{theorem}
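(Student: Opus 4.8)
The plan is to run the classical Duflo--Moore argument, whose engine is Schur's lemma applied to intertwiners into the regular representation; the operator $\mathcal{D}^{-1}$ then surfaces as a formal square root of the scalar that Schur's lemma attaches to a pair of admissible vectors, so the real content is functional-analytic bookkeeping rather than anything representation-theoretically deep. First I would use square integrability in the only way it can enter: it provides a nonzero vector $\psi_0$ whose coefficient function $x \mapsto \langle \psi_0, \sigma(x)^* \psi_0 \rangle$ lies in $L^2_r(G)$. Let $\mathcal{A} \subseteq \mathcal{H}$ be the set of all $\psi$ for which $W_\psi \colon \phi \mapsto \big( x \mapsto \langle \phi, \sigma(x)^* \psi \rangle \big)$ maps $\mathcal{H}$ into $L^2_r(G)$. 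Since $W_{\sigma(y)\psi}\phi(x) = W_\psi\phi(y^{-1}x)$ and left translation is a bounded operator on $L^2_r(G)$ by quasi-invariance of $\mu_r$, the set $\mathcal{A}$ is a $\sigma$-invariant linear subspace, nonzero because $\psi_0 \in \mathcal{A}$, so $\overline{\mathcal{A}} = \mathcal{H}$ by irreducibility. A closed-graph argument --- the graph of $W_\psi$ is closed because norm convergence in $\mathcal{H}$ forces pointwise convergence of the coefficient functions --- then upgrades ``$W_\psi\phi \in L^2_r(G)$ for every $\phi$'' to ``$W_\psi$ bounded'' for each $\psi \in \mathcal{A}$. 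This bootstrapping, from a single square-integrable coefficient function to boundedness of the entire family of coefficient operators, is the step I expect to be the main obstacle.

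Next, a direct computation shows that $W_\psi$ intertwines $\sigma$ with the right regular representation $\rho$ on $L^2_r(G)$ (right-invariance of $\mu_r$ making $\rho$ unitary), via $W_\psi \sigma(y)\phi = \big(x \mapsto W_\psi\phi(xy)\big)$. Hence for $\psi_1, \psi_2 \in \mathcal{A}$ the bounded operator $W_{\psi_1}^* W_{\psi_2}$ commutes with every $\sigma(y)$, so Schur's lemma gives $W_{\psi_1}^* W_{\psi_2} = c(\psi_1, \psi_2)\, \mathrm{Id}_{\mathcal{H}}$ for a sesquilinear form $c$ on $\mathcal{A}$; pairing $W_\psi^* W_\psi = c(\psi,\psi)\,\mathrm{Id}_{\mathcal{H}}$ with a unit vector shows $c$ is positive, and $W_\psi \neq 0$ for $\psi \neq 0$ shows it is non-degenerate. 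Expanding $\langle W_{\psi_1}\phi_1, W_{\psi_2}\phi_2 \rangle_{L^2_r(G)} = \langle W_{\psi_2}^* W_{\psi_1}\phi_1, \phi_2\rangle$ then yields the orthogonality relation (ii), provided $c$ is identified with $\langle \mathcal{D}^{-1}\cdot,\, \mathcal{D}^{-1}\cdot\rangle$.

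That identification is precisely the construction of $\mathcal{D}^{-1}$. I would equip $\mathcal{A}$ with the inner product $\langle \psi_1, \psi_2 \rangle_{\mathcal{D}} := \overline{c(\psi_1, \psi_2)}$, check that $(\mathcal{A}, \langle \cdot,\cdot \rangle_{\mathcal{D}})$ is complete, and check that the formal inclusion $(\mathcal{A}, \langle \cdot,\cdot \rangle_{\mathcal{D}}) \hookrightarrow \mathcal{H}$ has closed graph; this manufactures a densely defined, positive, closed, self-adjoint operator $\mathcal{D}^{-1}$ on $\mathcal{H}$ with $\operatorname{Dom}(\mathcal{D}^{-1}) = \mathcal{A}$ and $\langle \mathcal{D}^{-1}\psi_1, \mathcal{D}^{-1}\psi_2 \rangle = c(\psi_2, \psi_1)$, the conjugation fixed so that (ii) holds as written, which is exactly (i) together with (ii). Its inverse $\mathcal{D}$ is densely defined because $\mathcal{D}^{-1}$ is injective (by non-degeneracy of $c$) and self-adjoint, hence has dense range; and $\mathcal{D}^{-1}$ is unique because (ii) pins down $\| \mathcal{D}^{-1}\psi \|$ for every $\psi \in \mathcal{A}$, hence the quadratic form, hence the positive operator.

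For the covariance relation (iii), replacing the window $\psi$ by $\sigma(y)\psi$ replaces the coefficient function $F = W_\psi\phi$ by $x \mapsto F(y^{-1}x)$, since $\sigma(x)^*\sigma(y) = \sigma(y^{-1}x)^*$. Combining the quasi-invariance identity $\int_G F(y^{-1}x)\,d\mu_r(x) = \Delta_G(y)^{-1}\int_G F(x)\,d\mu_r(x)$ with (ii) gives $\| \mathcal{D}^{-1}\sigma(y)\psi \|^2 = \Delta_G(y)^{-1}\| \mathcal{D}^{-1}\psi \|^2$ for all $\psi \in \mathcal{A}$; polarizing, $\sigma(y)^* \mathcal{D}^{-2}\sigma(y)$ is a scalar multiple of $\mathcal{D}^{-2}$, and taking positive square roots and then inverses yields \eqref{eq:duflo_moore_covariance_relation}, the precise power of $\Delta_G$ being dictated by the right-Haar-measure and $\sigma(x)^*$ conventions in the statement. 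Beyond correctly chasing that exponent, this final part is a short computation once the scaffolding above is in place.
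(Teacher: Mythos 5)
You should first be aware that the paper does not prove this statement at all: Theorem \ref{thm:duflo_moore} is recalled from the cited work of Duflo and Moore, rewritten in the right-Haar-measure convention, so there is no internal proof to compare against. Your sketch is therefore measured against the classical Duflo--Moore/Grossmann--Morlet--Paul argument, and its architecture is indeed the right one: coefficient operators $W_\psi$ intertwining $\sigma$ with the regular representation on $L^2_r(G)$, Schur's lemma to produce the sesquilinear form $c$, the representation theorem for a closed positive form to manufacture $\D^{-1}$, and quasi-invariance of $\mu_r$ for the covariance relation. Your intertwining identity, the quasi-invariance identity $\int_G F(y^{-1}x)\,d\mu_r(x)=\Delta_G(y)^{-1}\int_G F(x)\,d\mu_r(x)$, and the uniqueness argument are all correct; and for what it is worth, carrying your covariance computation through gives $\sigma(x)^*\D\sigma(x)=\sqrt{\Delta_G(x)}\,\D$, i.e.\ \eqref{eq:duflo_moore_covariance_relation} with $x$ replaced by $x^{-1}$, which is the version consistent with the paper's affine example and with how the relation is used later (converting $\dmr$ into $\dml$), so your hedge about the exponent was justified.

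The genuine gap is the assertion ``nonzero because $\psi_0\in\mathcal{A}$.'' Square integrability only provides one square-integrable \emph{diagonal} coefficient, $x\mapsto\langle\psi_0,\sigma(x)^*\psi_0\rangle\in L^2_r(G)$; it does not give $W_{\psi_0}\phi\in L^2_r(G)$ for every $\phi\in\mathcal{H}$, which is how you defined membership in $\mathcal{A}$. That implication is one of the substantive claims of the theorem (it is what makes item (i) true with admissibility defined via the diagonal coefficient), and your closed-graph step cannot supply it: closed graph upgrades ``everywhere defined'' to ``bounded,'' never ``defined at one vector'' to ``everywhere defined.'' The standard repair is to apply the Schur-type argument \emph{before} boundedness is known: take $D=\{\phi:W_{\psi_0}\phi\in L^2_r(G)\}$, which contains $\psi_0$, is $\sigma$-invariant, hence dense by irreducibility; show $W_{\psi_0}$ is closed on $D$ and intertwines $\sigma|_D$ with the unitary right regular representation; then $|W_{\psi_0}|$ commutes with the irreducible $\sigma$ through its spectral projections (or via the partial isometry in the polar decomposition), hence is a scalar, which forces $D=\mathcal{H}$ and boundedness simultaneously. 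Only after this does your bounded-operator Schur step for $W_{\psi_1}^*W_{\psi_2}$ apply, and only then does $\operatorname{Dom}(\D^{-1})=\mathcal{A}$ coincide with the admissible vectors as claimed in (i). A secondary soft spot: ``check that $(\mathcal{A},\langle\cdot,\cdot\rangle_{\D})$ is complete'' is exactly the closedness of the form and requires an actual argument (a pointwise-limit comparison of coefficient functions in the spirit of your closed-graph remark), not a routine verification.
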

	We can lift this result to the operator setting by taking the singular value decompositions of two operators and apply the above result to the rank-one situations.
	\begin{theorem}\label{thm:workhorse}
		Let $S \in \mathcal{S}^1$ satisfy $\mathcal{D}S\mathcal{D} \in B(\mathcal{H})$. For any $T \in \mathcal{S}^1$ we have that $T \star_G \mathcal{D}S\mathcal{D} \in L^1_r(G)$ with
		$$
		\Vert T \star_G \mathcal{D}S\mathcal{D} \Vert_{L^1_r(G)} \leq \Vert T \Vert_{\mathcal{S}^1} \Vert S \Vert_{\mathcal{S}^1}
		$$
		and
		\begin{align}\label{eq:workhorse_equality}
		    \int_G T \star_G \D S \D(x) \dmr(x) = \tr(T)\tr(S).    
		\end{align}
	\end{theorem}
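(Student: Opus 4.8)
The plan is to peel the statement down to rank-one pieces by inserting the singular value decompositions of both operators, and then to invoke the Duflo--Moore orthogonality relation \eqref{eq:duflo_ortho_right} twice: once, in tandem with the Cauchy--Schwarz inequality, to obtain the $L^1_r$-bound (which is also what legitimises every interchange of sum and integral below), and a second time, in its genuine ``cross'' form, to evaluate the integral in \eqref{eq:workhorse_equality}.

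Concretely, I would fix singular value decompositions $T=\sum_n s_n(T)\,\xi_n\otimes\eta_n$ and $S=\sum_m s_m(S)\,\psi_m\otimes\phi_m$ with orthonormal systems $\{\xi_n\},\{\eta_n\},\{\psi_m\},\{\phi_m\}$. The hypothesis $\mathcal{D}S\mathcal{D}\in B(\mathcal{H})$ is used precisely to guarantee that $\psi_m,\phi_m\in\operatorname{Dom}(\mathcal{D})$ and that $\mathcal{D}S\mathcal{D}=\sum_m s_m(S)\,(\mathcal{D}\psi_m)\otimes(\mathcal{D}\phi_m)$ with the partial sums converging strongly; since $\mathcal{D}^{-1}\mathcal{D}\psi_m=\psi_m$ and $\mathcal{D}^{-1}\mathcal{D}\phi_m=\phi_m$, the vectors $\mathcal{D}\psi_m,\mathcal{D}\phi_m$ are admissible, so \eqref{eq:duflo_ortho_right} is applicable to them. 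Writing $T$ in its SVD inside the trace and applying Lemma \ref{lemma:rank_one_convolved_with_operator} to each rank-one piece against the expansion of $\mathcal{D}S\mathcal{D}$ gives, for each $x\in G$,
\[
T\star_G\mathcal{D}S\mathcal{D}(x)=\sum_{n,m}s_n(T)s_m(S)\,\langle\xi_n,\sigma(x)^*\mathcal{D}\phi_m\rangle\,\overline{\langle\eta_n,\sigma(x)^*\mathcal{D}\psi_m\rangle}.
\]

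For the norm estimate I would bound $|T\star_G\mathcal{D}S\mathcal{D}(x)|$ by the double sum of the absolute values, apply Tonelli, and then Cauchy--Schwarz in the variable $x$ to each summand, bounding it by $\big(\int_G|\langle\xi_n,\sigma(x)^*\mathcal{D}\phi_m\rangle|^2\dmr(x)\big)^{1/2}\big(\int_G|\langle\eta_n,\sigma(x)^*\mathcal{D}\psi_m\rangle|^2\dmr(x)\big)^{1/2}$. By \eqref{eq:duflo_ortho_right} (both $\phi$-slots equal, both $\psi$-slots equal) these two factors are $\Vert\xi_n\Vert\,\Vert\mathcal{D}^{-1}\mathcal{D}\phi_m\Vert=1$ and $\Vert\eta_n\Vert\,\Vert\mathcal{D}^{-1}\mathcal{D}\psi_m\Vert=1$, so the double series is dominated by $\sum_{n,m}s_n(T)s_m(S)=\Vert T\Vert_{\mathcal{S}^1}\Vert S\Vert_{\mathcal{S}^1}$. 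This yields $T\star_G\mathcal{D}S\mathcal{D}\in L^1_r(G)$ together with the claimed inequality, and it simultaneously shows the double series converges absolutely in $L^1_r(G)$, so Fubini applies. Interchanging summation and integration and using \eqref{eq:duflo_ortho_right} once more, now with $\phi_1=\xi_n,\psi_1=\mathcal{D}\phi_m,\phi_2=\eta_n,\psi_2=\mathcal{D}\psi_m$, each $x$-integral collapses to $\langle\xi_n,\eta_n\rangle\,\overline{\langle\phi_m,\psi_m\rangle}$, so that
\[
\int_G T\star_G\mathcal{D}S\mathcal{D}(x)\dmr(x)=\Big(\sum_n s_n(T)\langle\xi_n,\eta_n\rangle\Big)\Big(\sum_m s_m(S)\langle\psi_m,\phi_m\rangle\Big)=\tr(T)\tr(S),
\]
using $\tr(\psi\otimes\phi)=\langle\psi,\phi\rangle$ and basis-independence of the trace.

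The step I expect to be genuinely delicate is the bookkeeping around the unbounded operator $\mathcal{D}$: one must know that $\mathcal{D}S\mathcal{D}\in B(\mathcal{H})$ forces the singular vectors of $S$ into $\operatorname{Dom}(\mathcal{D})$ and justifies both the identity $\mathcal{D}S\mathcal{D}=\sum_m s_m(S)(\mathcal{D}\psi_m)\otimes(\mathcal{D}\phi_m)$ and the pointwise formula above, with the partial sums converging strongly enough that all termwise manipulations are legitimate. I would reduce to positive $S$ first (the condition passes to $\operatorname{Re}S$, $\operatorname{Im}S$ and then to their positive and negative parts), observe that for $S\geq 0$ the truncated sums are dominated by $\mathcal{D}S\mathcal{D}$ as quadratic forms on $\operatorname{Dom}(\mathcal{D})$, and conclude by monotone convergence of operators. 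Everything else --- measurability of $T\star_G\mathcal{D}S\mathcal{D}$ as a strong limit of continuous functions, and the combinatorial bookkeeping with the two orthonormal systems --- is routine.
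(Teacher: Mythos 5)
Your proposal is correct and follows essentially the same route as the paper's proof: singular value decompositions reduce everything to rank-one pieces, Cauchy--Schwarz combined with the Duflo--Moore orthogonality relation \eqref{eq:duflo_ortho_right} gives the $L^1_r$-bound and legitimises Tonelli/Fubini, and a second application of \eqref{eq:duflo_ortho_right} yields the trace identity; the only difference is organizational, in that you expand $T$ and $S$ simultaneously into one double sum where the paper first treats rank-one $S$ and then sums over the SVD of $S$. One caveat on your closing remark: the proposed reduction of the domain bookkeeping to positive $S$ via the positive and negative parts of $\operatorname{Re}S$ is not obviously valid, since boundedness of $\mathcal{D}(\operatorname{Re}S)\mathcal{D}$ need not pass to $\mathcal{D}(\operatorname{Re}S)^{+}\mathcal{D}$ (the positive part is a spectral function and cancellations could occur), though the paper's own proof treats this expansion step no more rigorously.
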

    	\begin{proof}
		The proof consists of two steps.\\
		\textbf{Step 1:}
		We consider first the case where $S = \psi \otimes \phi$ for $\psi, \phi \in \mathcal{H}$. Then $\D S \D = \D \psi \otimes \D \phi$ and since $\mathcal{D} S \mathcal{D}$ is bounded by assumption, it in particular holds that $\psi, \phi \in \operatorname{Dom}(\D)$. To see this, note that
		$$
		\D S \D (f) = \langle f, \D \phi \rangle \D \psi
		$$
		and so since this is bounded for all $f$, $\D \psi$ and $\D \phi$ are also bounded.
		
		We now compute
		\begin{align*}
			T \star_G \D S \D(x) &= \tr(T \sigma(x)^* (\D \psi \otimes \D \phi) \sigma(x))\\
			&= \sum_n \langle T \sigma(x)^* (\D \psi \otimes \D \phi) \sigma(x) e_n, e_n\rangle\\
			&= \sum_n \langle \sigma(x) e_n, \D \phi\rangle \langle T \sigma(x)^*D \psi, e_n \rangle\\
			&= \langle T \sigma(x)^* \D \psi, \sigma(x)^* \D \phi \rangle.
		\end{align*}
		Since $T \in \mathcal{S}^1$, we can expand it using its singular value decomposition $T = \sum_n t_n \xi_n \otimes \eta_n$ which allows us to write
		\begin{align*}
			T \star_G \D S \D(x) &= \sum_n t_n \langle (\xi_n \otimes \eta_n)(\sigma(x)^* \D \psi), \sigma(x)^* \D \phi \rangle\\
			&= \sum_n t_n \langle \sigma(x)^* \D \psi, \eta_n \rangle \langle \xi_n, \sigma(x)^* \D \phi \rangle.
		\end{align*}
		Now each term is of the form in the Duflo-Moore orthogonality relation \eqref{eq:duflo_ortho_right}. We can therefore proceed by integrating each term after bounding the result as
		\begin{align*}
			\int_G \big|\langle \sigma(&x)^* \D \psi, \eta_n \rangle \langle \xi_n, \sigma(x)^* \D \phi \rangle\big| \dmr(x)\\
			&\leq \left( \int_G |\langle \sigma(x)^* \D \psi, \eta_n \rangle|^2 \dmr(x)\right)^{1/2}\left( \int_G |\langle \xi_n, \sigma(x)^* \D \phi \rangle|^2 \dmr(x)\right)^{1/2}\\
			&= \Vert \eta_n \Vert \Vert \D^{-1} \D \psi\Vert \Vert \xi_n \Vert \Vert \D^{-1} \D \phi \Vert = \Vert \psi \Vert \Vert \phi \Vert.
		\end{align*}
		Since $T \in \mathcal{S}^1$, $(t_n)_n$ is summable and we can move the integral inside to deduce that
		$$
		\Vert T \star_G \mathcal{D}S\mathcal{D} \Vert_{L^1_r(G)} \leq \Vert T \Vert_{\mathcal{S}^1} \Vert \psi \Vert \Vert \phi \Vert.
		$$
		We can now establish \eqref{eq:workhorse_equality} by moving the integral inside the sum and using the Duflo-Moore orthogonality relation \eqref{eq:duflo_ortho_right} which yields
		$$
		\int_G T \star_G \mathcal{D} S \mathcal{D} (x) \dmr(x) = \sum_n t_n \langle\xi_n, \eta_n \rangle \langle \phi, \psi \rangle = \tr(T) \langle \phi, \psi \rangle.
		$$
		\textbf{Step 2:}
		We now move to considering $S$ as in the theorem. By compactness, we can consider its singular value decomposition of $S$ which is of the form
		$$
		S = \sum_n s_n \psi_n \otimes \phi_n.
		$$
		Hence we have
		\begin{align*}
			T \star_G \D S \D &= T \star_G \D \left( \sum_n s_n \psi_n \otimes \phi_n \right) \D\\
			&= \sum_n s_n T \star_G \D (\psi_n \otimes \phi_n) \D\\
			&= \sum_n s_n T \star_G (\D \psi_n \otimes \D \phi_n)
		\end{align*}
		where we are allowed to move the sum outside since the outer sum is uniformly convergent by Lemma \ref{lemma:operator_convolution_infinity_estimate}. We can also estimate the norm as
		\begin{align*}
			\Vert T \star_G \D S \D \Vert_{L^1_r(G)} &\leq \sum_n |s_n| \Vert T \star_G (\D \psi_n \otimes \D \phi_n) \Vert_{L^1_r(G)}\\
			&= \Vert T \Vert_{\mathcal{S}^1} \Vert S \Vert_{\mathcal{S}^1}.
		\end{align*}
		For \eqref{eq:workhorse_equality}, the same method used in the first step yields the desired conclusion.
	\end{proof}

	Because we typically integrate $T \star_G S$ in applications, this theorem is more useful when considering the operator $\D^{-1} S \D^{-1}$ which leads us to make the following definition.
	\begin{definition}
		Let $S \neq 0$ be a bounded operator on $\mathcal{H}$ that maps $\operatorname{Dom}(\D)$ into $\operatorname{Dom}(\D^{-1})$. We say that $S$ is \emph{admissible} if the composition $\D^{-1} S \D^{-1}$ is bounded on $\operatorname{Dom}(\D^{-1})$ and extends to a trace-class operator $\D^{-1} S \D^{-1} \in \mathcal{S}^1$.
	\end{definition}
	We can now restate Theorem \ref{thm:workhorse} using the above definition.
	\begin{corollary}\label{corollary:workhorse_alternative}
		Let $S$ be an admissible operator and $T \in \mathcal{S}^1$. Then $T \star_G S \in L^1_r(G)$ with
		$$
		\Vert T \star_G S \Vert_{L^1_r(G)} \leq \Vert T \Vert_{\mathcal{S}^1} \Vert \D^{-1} S \D^{-1} \Vert_{\mathcal{S}^1}
		$$
		and
		$$
		\int_G T \star_G S(x) \dmr(x) = \tr(T)\tr(\D^{-1} S \D^{-1}).
		$$
	\end{corollary}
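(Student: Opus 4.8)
The plan is to deduce this immediately from Theorem \ref{thm:workhorse} by applying that theorem not to $S$ itself but to the trace-class operator $S_0 := \D^{-1} S \D^{-1}$ supplied by the definition of admissibility. Concretely, I would show that $S_0 \in \mathcal{S}^1$ satisfies the hypothesis of Theorem \ref{thm:workhorse}, namely $\D S_0 \D \in B(\mathcal{H})$, and that this bounded operator is exactly $S$. Granting this, the two displayed conclusions of the corollary are nothing but the norm estimate and \eqref{eq:workhorse_equality} of Theorem \ref{thm:workhorse} written for the pair $(T, S_0)$, after substituting $\D S_0 \D = S$ (so $T \star_G \D S_0 \D = T \star_G S$) and $S_0 = \D^{-1} S \D^{-1}$.

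First I would fix the domain bookkeeping. Since $\D = (\D^{-1})^{-1}$, one has $\operatorname{Dom}(\D) = \operatorname{Ran}(\D^{-1})$ and $\operatorname{Dom}(\D^{-1}) = \operatorname{Ran}(\D)$, both dense by Theorem \ref{thm:duflo_moore}. For $f \in \operatorname{Dom}(\D^{-1})$ we get $\D^{-1} f \in \operatorname{Dom}(\D)$, hence $S \D^{-1} f \in \operatorname{Dom}(\D^{-1})$ because $S$ maps $\operatorname{Dom}(\D)$ into $\operatorname{Dom}(\D^{-1})$; thus $\D^{-1} S \D^{-1} f$ is well defined, and by admissibility this operator on the dense domain $\operatorname{Dom}(\D^{-1})$ extends to $S_0 \in \mathcal{S}^1$.

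Next I would verify $\D S_0 \D = S$. Take $g \in \operatorname{Dom}(\D)$, so $\D g \in \operatorname{Dom}(\D^{-1})$; then $S_0 \D g = \D^{-1} S \D^{-1} \D g = \D^{-1} S g$, using $\D^{-1}\D g = g$. Since $g \in \operatorname{Dom}(\D)$ gives $Sg \in \operatorname{Dom}(\D^{-1})$, applying $\D$ and using $\D\D^{-1} = \mathrm{id}$ on $\operatorname{Dom}(\D^{-1})$ yields $\D S_0 \D g = S g$. Hence $\D S_0 \D$ agrees with the bounded operator $S$ on the dense set $\operatorname{Dom}(\D)$, so $\D S_0 \D \in B(\mathcal{H})$ and, in the sense in which $\D S_0 \D$ appears in Theorem \ref{thm:workhorse} (its bounded extension), $\D S_0 \D = S$. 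Applying Theorem \ref{thm:workhorse} to $(T, S_0)$ then gives $T \star_G S = T \star_G \D S_0 \D \in L^1_r(G)$ with $\Vert T \star_G S \Vert_{L^1_r(G)} \le \Vert T \Vert_{\mathcal{S}^1}\Vert S_0 \Vert_{\mathcal{S}^1}$ and $\int_G T \star_G S(x)\dmr(x) = \tr(T)\tr(S_0)$, which is exactly the claim since $S_0 = \D^{-1} S \D^{-1}$.

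The only genuinely delicate point — and the step I expect to be the main obstacle — is this middle step: making sure the cancellations $\D^{-1}\D = \mathrm{id}$ and $\D\D^{-1} = \mathrm{id}$ are invoked only on the subspaces where they hold, and confirming that the trace-class extension $S_0$ really restricts back to $\D^{-1} S \D^{-1}$ on $\operatorname{Dom}(\D^{-1})$, so that pre- and post-composing with $\D$ recovers $S$ and not merely some extension of it. Everything after that is a direct reading of Theorem \ref{thm:workhorse}.
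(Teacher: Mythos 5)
Your proposal is correct and follows essentially the same route as the paper, which offers no separate argument but simply treats the corollary as Theorem \ref{thm:workhorse} applied to $S_0 = \D^{-1} S \D^{-1}$ with $\D S_0 \D = S$. Your careful verification that $\D S_0 \D$ agrees with $S$ on the dense domain $\operatorname{Dom}(\D)$ (so that the hypothesis of Theorem \ref{thm:workhorse} holds for the pair $(T, S_0)$) is exactly the bookkeeping the paper leaves implicit.
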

	\begin{example}\label{example:recover_duflo_moore}
	    As hinted at by the proof of Theorem \ref{thm:workhorse}, we can recover the orthogonality relation \eqref{eq:duflo_ortho_right} by choosing $S$ and $T$ as rank-one operators. To make this explicit, choose
	    $$
	    T = \phi_1 \otimes \phi_2,\qquad S = \psi_2 \otimes \psi_1
	    $$
	    where $\psi_1, \psi_2$ are admissible vectors in the sense of Theorem \ref{thm:duflo_moore}, i.e. $\psi_1, \psi_2 \in \operatorname{Dom}(\D^{-1})$. Then $S$ is admissible since
	    $$
	    \Vert \D^{-1} S \D^{-1} \Vert_{\mathcal{S}^1} = \Vert \D^{-1}\psi_2 \otimes \D^{-1}\psi_1 \Vert_{\mathcal{S}^1} \leq \Vert \D^{-1}\psi_2 \Vert \Vert \D^{-1} \psi_1 \Vert < \infty.
	    $$
	    Corollary \ref{corollary:workhorse_alternative} combined with Lemma \ref{lemma:rank_one_convolved_with_operator} now allows us to compute $T \star_G S(x)$ as
	    \begin{align*}
	        \int_G T \star_G S(x)\dmr(x) &= \int_G \langle \phi_1, \sigma(x)^*\psi_1 \rangle \overline{\langle \phi_2, \sigma(x)^*\psi_2 \rangle} \dmr(x)\\
	        &= \tr(\phi_1 \otimes \phi_2) \tr(\D^{-1} \psi_2 \otimes \D^{-1} \psi_1) \\
	        &= \langle \phi_1, \phi_2 \rangle \overline{\langle \D^{-1} \psi_1, \D^{-1} \psi_2 \rangle}
	    \end{align*}
	    which is exactly \eqref{eq:duflo_ortho_right}.
	\end{example}
	
    By demanding stronger conditions on both $S$ and $T$ we can deduce the following corollary.
	\begin{corollary}\label{corollary:admissible_left_right_integrability}
		Let $S$ and $T$ be admissible trace-class operators on $\mathcal{H}$, then the convolution $T \star_G S$ satisfies $T \star_G S \in L^1_r(G) \cap L^1_\ell(G)$ and
		\begin{align*}
			\int_G T \star_G S(x) \dmr(x) = \tr(T) \tr(\D^{-1} S \D^{-1}),\\
			\int_G T \star_G S(x) \dml(x) = \tr(S) \tr(\D^{-1} T \D^{-1}).
		\end{align*}
	\end{corollary}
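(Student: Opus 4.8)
The plan is to reduce everything to the right–Haar–measure statement of Corollary~\ref{corollary:workhorse_alternative} by combining a symmetry of the operator–operator convolution under inversion with the standard relation between $\mu_\ell$ and $\mu_r$. The claims involving $\dmr$, namely $T \star_G S \in L^1_r(G)$ and $\int_G T\star_G S(x)\dmr(x) = \tr(T)\tr(\D^{-1}S\D^{-1})$, are already contained in Corollary~\ref{corollary:workhorse_alternative}, since $T \in \mathcal{S}^1$ and $S$ is admissible; so only the left–Haar–measure assertions are new.

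The key observation is the inversion symmetry
$$
T \star_G S(x) = S \star_G T(x^{-1}), \qquad x \in G.
$$
This is a one-line computation: because $\sigma$ is unitary we have $\sigma(x^{-1}) = \sigma(x)^*$, so $S \star_G T(x^{-1}) = \tr\big(S\,\sigma(x)\,T\,\sigma(x)^*\big)$, and cyclicity of the trace (valid since $S, T \in \mathcal{S}^1$ and $\sigma(x)$ is bounded) turns the right-hand side into $\tr\big(T\,\sigma(x)^*\,S\,\sigma(x)\big) = T \star_G S(x)$. Next I would use the change of variables $x \mapsto x^{-1}$: from $\mu_r(E) = \mu_\ell(E^{-1})$ one gets $\int_G g(x^{-1})\dmr(x) = \int_G g(x)\dml(x)$ for every non-negative measurable $g$ on $G$, hence for every $g \in L^1_\ell(G)$.

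Now I apply Corollary~\ref{corollary:workhorse_alternative} with the roles of $S$ and $T$ interchanged — legitimate because $S \in \mathcal{S}^1$ and $T$ is admissible — to conclude $S \star_G T \in L^1_r(G)$ with $\int_G S \star_G T(x)\dmr(x) = \tr(S)\tr(\D^{-1}T\D^{-1})$. Feeding $g = |S \star_G T|$ into the inversion identity of the previous paragraph and using the symmetry shows $T \star_G S \in L^1_\ell(G)$, and feeding in $g = S \star_G T$ itself gives
$$
\int_G T \star_G S(x)\dml(x) = \int_G S \star_G T(x)\dmr(x) = \tr(S)\,\tr(\D^{-1}T\D^{-1}),
$$
which is the remaining claim. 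The computations are all short; the only point needing a little care is justifying the inversion change of variables at the level of Haar measures and checking that the integrability hypotheses required to apply Corollary~\ref{corollary:workhorse_alternative} to $S \star_G T$ are exactly the ones available — and they are, since admissibility and the trace-class property are assumed symmetrically in $S$ and $T$. I therefore expect no genuine obstacle: once the symmetry identity is recorded, the rest is bookkeeping.
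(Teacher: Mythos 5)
Your proposal is correct and follows essentially the same route as the paper: the right-Haar claims come from Corollary~\ref{corollary:workhorse_alternative}, and the left-Haar claims come from the change of variables $x \mapsto x^{-1}$ together with the symmetry $T \star_G S(x^{-1}) = S \star_G T(x)$ (via $\sigma(x^{-1}) = \sigma(x)^*$ and cyclicity of the trace) and Corollary~\ref{corollary:workhorse_alternative} applied with the roles of $S$ and $T$ interchanged. You merely spell out the cyclicity computation and the Haar-measure inversion identity that the paper states without detail.
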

	\begin{proof}
	    The first equality and $T \star_G S \in L^1_r(G)$ is the statement of Corollary \ref{corollary:workhorse_alternative}. The two corresponding statements for the left Haar measure follow by making the change of variables $x \mapsto x^{-1}$ and using that $\sigma(x^{-1})^* = \sigma(x), \dmr(x^{-1}) = \dml(x)$ and $T \star_G S (x^{-1}) = S \star_G T (x)$.
	\end{proof}
	\subsection{Conditions for admissibility}
	In this section we go over some useful conditions for admissibility, all of which are generalizations of results in \cite{Berge2022}. The first result shows how admissible functions are related to admissible operators.
	\begin{proposition}\label{prop:rank_one_admissibility_criteria}
		A rank-one operator $S = \psi \otimes \phi$ for non-zero $\psi, \phi$ is an admissible operator if and only if $\psi, \phi \in \mathcal{H}$ are admissible functions.
	\end{proposition}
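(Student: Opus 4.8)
The plan is to reduce the operator-level condition to the two vector-level conditions by means of the elementary identity $\D^{-1}(\psi\otimes\phi)\D^{-1}=(\D^{-1}\psi)\otimes(\D^{-1}\phi)$, which holds on $\operatorname{Dom}(\D^{-1})$ as soon as both $\psi$ and $\phi$ lie in $\operatorname{Dom}(\D^{-1})$. Throughout I would use from Theorem \ref{thm:duflo_moore} that $\D^{-1}$ is densely defined, closed, positive and self-adjoint with densely defined inverse $\D$; in particular $\D^{-1}$ is injective and $\operatorname{Dom}(\D^{-1})=\operatorname{Ran}(\D)$ is dense in $\mathcal{H}$.

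For the ``if'' direction I would assume $\psi,\phi\in\operatorname{Dom}(\D^{-1})$ and proceed in three short steps. First, $S=\psi\otimes\phi\neq0$ since $\psi,\phi\neq0$. Second, for $g\in\operatorname{Dom}(\D)$ one has $S(\D g)=\langle\D g,\phi\rangle\,\psi\in\operatorname{span}\{\psi\}\subseteq\operatorname{Dom}(\D^{-1})$, so $S$ maps $\operatorname{Dom}(\D)$ into $\operatorname{Dom}(\D^{-1})$. Third, for $f\in\operatorname{Dom}(\D^{-1})$ self-adjointness of $\D^{-1}$ gives $\langle\D^{-1}f,\phi\rangle=\langle f,\D^{-1}\phi\rangle$, so $\D^{-1}S\D^{-1}f=\langle f,\D^{-1}\phi\rangle\,\D^{-1}\psi$; thus $\D^{-1}S\D^{-1}$ agrees on the dense subspace $\operatorname{Dom}(\D^{-1})$ with the bounded rank-one operator $(\D^{-1}\psi)\otimes(\D^{-1}\phi)$, and any bounded rank-one operator is trace class, so $S$ is admissible.

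For the ``only if'' direction I would assume $S=\psi\otimes\phi$ admissible and recover the two memberships separately. To obtain $\psi\in\operatorname{Dom}(\D^{-1})$: since $\operatorname{Dom}(\D^{-1})$ is dense and $\phi\neq0$, choose $h\in\operatorname{Dom}(\D^{-1})=\operatorname{Ran}(\D)$ with $\langle h,\phi\rangle\neq0$; writing $h=\D g$, the hypothesis that $S$ maps $\operatorname{Dom}(\D)$ into $\operatorname{Dom}(\D^{-1})$ forces $\langle h,\phi\rangle\,\psi=S(h)\in\operatorname{Dom}(\D^{-1})$, hence $\psi\in\operatorname{Dom}(\D^{-1})$. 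With that established, for $f\in\operatorname{Dom}(\D^{-1})$ we again compute $\D^{-1}S\D^{-1}f=\langle\D^{-1}f,\phi\rangle\,\D^{-1}\psi$; since $\D^{-1}$ is injective and $\psi\neq0$ we have $\D^{-1}\psi\neq0$, so boundedness of $\D^{-1}S\D^{-1}$ forces the functional $f\mapsto\langle\D^{-1}f,\phi\rangle$ to be bounded on the dense domain $\operatorname{Dom}(\D^{-1})$, which is precisely the statement that $\phi\in\operatorname{Dom}((\D^{-1})^{*})=\operatorname{Dom}(\D^{-1})$. Hence both $\psi$ and $\phi$ are admissible functions.

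The step I expect to require the most care is the domain bookkeeping: one must verify membership in $\operatorname{Dom}(\D^{-1})$ before invoking the self-adjointness identity $\langle\D^{-1}f,\phi\rangle=\langle f,\D^{-1}\phi\rangle$, and one must recognize that boundedness of $f\mapsto\langle\D^{-1}f,\phi\rangle$ on the dense set $\operatorname{Dom}(\D^{-1})$ is equivalent to $\phi$ lying in the domain of the (self-)adjoint of $\D^{-1}$. Once this is handled, the trace-class clause in the definition of admissibility is automatic, since every bounded rank-one operator is trace class.
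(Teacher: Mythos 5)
Your argument is essentially the paper's: the ``if'' direction is the computation of Example \ref{example:recover_duflo_moore} (the bounded rank-one operator $(\D^{-1}\psi)\otimes(\D^{-1}\phi)$ is automatically trace class), and the ``only if'' direction extracts the domain memberships of $\psi$ and $\phi$ from boundedness of $\D^{-1}S\D^{-1}$; your treatment of $\phi$ via $\operatorname{Dom}((\D^{-1})^{*})=\operatorname{Dom}(\D^{-1})$ is in fact more careful than the paper's one-line estimate, which writes $\D^{-1}\psi$ and $\D^{-1}\phi$ before their existence is justified. One slip should be repaired: in both directions you apply $S$ to vectors of the form $\D g$, i.e.\ to elements of $\operatorname{Ran}(\D)=\operatorname{Dom}(\D^{-1})$, whereas the definition of admissibility only postulates that $S$ maps $\operatorname{Dom}(\D)$ into $\operatorname{Dom}(\D^{-1})$, and these two domains need not be comparable. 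In the ``if'' direction this is harmless, since $\operatorname{Ran}(S)\subseteq\operatorname{span}\{\psi\}\subseteq\operatorname{Dom}(\D^{-1})$ makes the mapping condition automatic for every input vector. In the ``only if'' direction, however, your choice of $h\in\operatorname{Dom}(\D^{-1})$ is not covered by the hypothesis: instead choose $g\in\operatorname{Dom}(\D)$ with $\langle g,\phi\rangle\neq 0$ (possible since $\operatorname{Dom}(\D)$ is dense and $\phi\neq 0$), so that $Sg=\langle g,\phi\rangle\psi\in\operatorname{Dom}(\D^{-1})$ yields $\psi\in\operatorname{Dom}(\D^{-1})$; equivalently, note that $S\D^{-1}f=\langle\D^{-1}f,\phi\rangle\psi$ must lie in $\operatorname{Dom}(\D^{-1})$ for $f\in\operatorname{Dom}(\D^{-1})$ and that $\D^{-1}f$ ranges over the dense set $\operatorname{Dom}(\D)$. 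With that local fix your proof is complete and follows the same route as the paper.
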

	\begin{proof}
		If $S = \psi \otimes \phi$ is admissible, then $\mathcal{D}^{-1}S\D^{-1}$ is trace-class and in particular bounded. Hence for $\xi \in \operatorname{Dom}(\D^{-1})$
		$$
		\Vert \D^{-1} S \D^{-1} \xi \Vert = |\langle \D^{-1} \xi, \phi \rangle| \Vert \D^{-1} \psi \Vert = |\langle \xi, \D^{-1}\phi \rangle| \Vert \D^{-1} \psi \Vert < C \Vert \xi \Vert
        $$
        which implies that both $\psi$ and $\phi$ are in $\operatorname{Dom}(\D^{-1})$. The converse was shown in Example \ref{example:recover_duflo_moore}.
	\end{proof}
	The next proposition characterizes positive admissible operators.
	\begin{proposition}\label{prop:char_of_admissible_positive_compact}
		Let $S$ be a non-zero positive compact operator with spectral decomposition
		$$
		S = \sum_n s_n \xi_n \otimes \xi_n.
		$$
		Then $S$ is admissible if and only if each $\xi_n$ is admissible and 
		$$
		\sum_n s_n \Vert \mathcal{D}^{-1} \xi_n \Vert^2 < \infty.
		$$
	\end{proposition}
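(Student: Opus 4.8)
The plan is to prove both implications by factoring through the square root $\sqrt{S}$ and exploiting that the Duflo--Moore operator $\D^{-1}$ from Theorem \ref{thm:duflo_moore} is densely defined, closed and self-adjoint. I will use throughout that $\{\xi_n\}_n$ is an orthonormal set, that $S\xi_n = s_n\xi_n$ with $s_n > 0$, and hence that $\sqrt{S}\xi_n = \sqrt{s_n}\,\xi_n$.

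First I would treat the direction ``$S$ admissible $\Rightarrow$ the two conditions''. Write $R := \D^{-1}S\D^{-1} \in \mathcal{S}^1$ for the trace-class extension. Since $S \geq 0$, on the dense subspace $\operatorname{Dom}(\D^{-1})$ we have $\langle R\xi,\xi\rangle = \langle S\D^{-1}\xi, \D^{-1}\xi\rangle = \Vert\sqrt{S}\D^{-1}\xi\Vert^2$, which shows that $R \geq 0$ and that $\sqrt{S}\D^{-1}$ is bounded on its domain; let $A$ be its bounded extension to $\mathcal{H}$. Then $A^*A = R$ (both sides being positive operators with the same quadratic form), so $A \in \mathcal{S}^2$ with $\Vert A\Vert_{\mathcal{S}^2}^2 = \tr(R)$. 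Taking adjoints, for $\eta \in \mathcal{H}$ and $\xi \in \operatorname{Dom}(\D^{-1})$ one computes $\langle A^*\eta, \xi\rangle = \langle \eta, \sqrt{S}\D^{-1}\xi\rangle = \langle \sqrt{S}\eta, \D^{-1}\xi\rangle$, so self-adjointness of $\D^{-1}$ gives $\sqrt{S}\eta \in \operatorname{Dom}(\D^{-1})$ and $\D^{-1}\sqrt{S} = A^*$ on all of $\mathcal{H}$. Specialising to $\eta = \xi_n$ and using $\sqrt{S}\xi_n = \sqrt{s_n}\,\xi_n$ yields $\xi_n \in \operatorname{Dom}(\D^{-1})$ and $\D^{-1}\xi_n = s_n^{-1/2} A^*\xi_n$, hence $\sum_n s_n\Vert\D^{-1}\xi_n\Vert^2 = \sum_n \Vert A^*\xi_n\Vert^2 \leq \Vert A^*\Vert_{\mathcal{S}^2}^2 = \tr(R) < \infty$, since $\{\xi_n\}_n$ is orthonormal.

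For the converse, assume every $\xi_n \in \operatorname{Dom}(\D^{-1})$ and $c := \sum_n s_n\Vert\D^{-1}\xi_n\Vert^2 < \infty$. Then $R := \sum_n s_n\,(\D^{-1}\xi_n)\otimes(\D^{-1}\xi_n)$ converges absolutely in $\mathcal{S}^1$ with $\tr(R) = c$, and the task is to identify it with the extension of $\D^{-1}S\D^{-1}$. I would first check that $S$ maps all of $\mathcal{H}$, in particular $\operatorname{Dom}(\D)$, into $\operatorname{Dom}(\D^{-1})$: for $\eta \in \mathcal{H}$ the partial sums of $S\eta = \sum_n s_n\langle\eta,\xi_n\rangle\xi_n$ converge to $S\eta$, while $\sum_n s_n|\langle\eta,\xi_n\rangle|\,\Vert\D^{-1}\xi_n\Vert \leq \langle S\eta,\eta\rangle^{1/2} c^{1/2} < \infty$ by Cauchy--Schwarz (splitting $s_n = s_n^{1/2}\cdot s_n^{1/2}$), so the $\D^{-1}$-images of the partial sums converge and closedness of $\D^{-1}$ gives $S\eta \in \operatorname{Dom}(\D^{-1})$ with $\D^{-1}S\eta = \sum_n s_n\langle\eta,\xi_n\rangle\D^{-1}\xi_n$. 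Applying this with $\eta = \D^{-1}\xi$ for $\xi \in \operatorname{Dom}(\D^{-1})$ and rewriting $\langle\D^{-1}\xi,\xi_n\rangle = \langle\xi,\D^{-1}\xi_n\rangle$ shows $\D^{-1}S\D^{-1}\xi = R\xi$ on $\operatorname{Dom}(\D^{-1})$; as $R$ is bounded and $S \neq 0$, this is precisely admissibility of $S$.

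I expect the first implication to be the main obstacle: one must give meaning to the heuristic ``$S = \D R \D$'' with $\D$ unbounded, and the cleanest device is to transport boundedness across $\D^{-1}$ through the quadratic form $\Vert\sqrt{S}\D^{-1}\xi\Vert^2 = \langle R\xi,\xi\rangle$ and then use the adjoint identity $\D^{-1}\sqrt{S} = (\sqrt{S}\D^{-1})^*$, for which closedness and self-adjointness of $\D^{-1}$ are exactly what is needed. The remaining convergence bookkeeping in both directions is routine once one separates $\langle S\eta,\eta\rangle$ from $\sum_n s_n\Vert\D^{-1}\xi_n\Vert^2$ via Cauchy--Schwarz.
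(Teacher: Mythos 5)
Your proof is correct, and the interesting part is that your forward implication takes a genuinely different route from the paper. The paper stays inside its quantum-harmonic-analysis machinery: it fixes a unit vector $\xi$, expands $(\xi\otimes\xi)\star_G S(x)=\sum_n s_n|\langle\sigma(x)\xi,\xi_n\rangle|^2$, integrates with monotone convergence, and invokes Corollary \ref{corollary:workhorse_alternative} together with the Duflo--Moore orthogonality relation \eqref{eq:duflo_ortho_right} to see that each term is finite (so each $\xi_n$ is admissible) and that the sum $\sum_n s_n\Vert\D^{-1}\xi_n\Vert^2$ equals the finite integral. You instead argue purely operator-theoretically: from $\langle R\xi,\xi\rangle=\Vert\sqrt{S}\D^{-1}\xi\Vert^2$ you extract the bounded extension $A$ of $\sqrt{S}\D^{-1}$ with $A^*A=R\in\mathcal{S}^1$, use self-adjointness of $\D^{-1}$ to get $\D^{-1}\sqrt{S}=A^*$ on all of $\mathcal{H}$, and then read off $\xi_n\in\operatorname{Dom}(\D^{-1})$ and $\sum_n s_n\Vert\D^{-1}\xi_n\Vert^2=\sum_n\Vert A^*\xi_n\Vert^2\le\tr(R)$ from the Hilbert--Schmidt norm; the only inputs are that $\D^{-1}$ is densely defined, closed, positive and self-adjoint, so the group and representation never appear in this direction. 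What the paper's route buys is that it also identifies the integral $\int_G Q_S(\xi)\,d\mu_r$ with the admissibility constant, which is reused later (e.g.\ in Corollary \ref{corollary:operator_convolution_integrability_implies_admissibility}); what your route buys is independence from the convolution formalism and a clean quantitative factorization $R=A^*A$ through $\sqrt{S}$. Your converse is essentially the paper's argument: closedness of $\D^{-1}$ applied to the partial sums, a Cauchy--Schwarz estimate (yours splits $s_n=s_n^{1/2}\cdot s_n^{1/2}$ and lands on $\langle S\eta,\eta\rangle^{1/2}c^{1/2}$, the paper's pulls out $\sup_n s_n$; both are fine), and identification of $\D^{-1}S\D^{-1}$ with $\sum_n s_n(\D^{-1}\xi_n)\otimes(\D^{-1}\xi_n)$ on $\operatorname{Dom}(\D^{-1})$ followed by bounded extension; in fact you prove the slightly stronger statement that $S$ maps all of $\mathcal{H}$, not just $\operatorname{Dom}(\D)$, into $\operatorname{Dom}(\D^{-1})$, which is more than the definition of admissibility requires but certainly suffices.
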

	\begin{proof}
		We first do the case where $S$ is admissible. Let $\xi \in \mathcal{H}$ with $\Vert \xi \Vert = 1$. Then by Lemma \ref{lemma:rank_one_convolved_with_operator} and linearity,
		\begin{align*}
		    (\xi \otimes \xi) \star_G S(x) &= \sum_n s_n \big\langle (\xi_n \otimes \xi_n) \sigma(x) \xi, \sigma(x) \xi \big\rangle\\
		    &= \sum_n s_n |\langle \sigma(x) \xi, \xi_n \rangle|^2.
		\end{align*}
		By integrating the above and using the monotone convergence theorem we find
		\begin{align*}
            \int_G \xi \otimes \xi \star_G S(x) \dmr(x) &= \sum_n s_n \int_G |\langle \sigma(x) \xi, \xi_n \rangle|^2 \dmr(x)\\
            &= \sum_n s_n \Vert \xi \Vert^2 \Vert \D^{-1} \xi_n \Vert^2
		\end{align*}
		where we used the Duflo-Moore relation \eqref{eq:duflo_ortho_right} in the last step. This sum is finite since the integral can be bounded using Corollary \ref{corollary:workhorse_alternative}, the fact that $S$ is admissible and $\Vert \xi \Vert = 1$.
		
		For the other direction, assume that each $\xi_n$ is admissible and $\sum_n s_n \Vert \D^{-1} \xi_n \Vert^2 < \infty$. It is then clear that
		\begin{align}\label{eq:DSD_ideal_form}
		   	\sum_n s_n (\D^{-1} \xi_n) \otimes (\D^{-1} \xi_n)
		\end{align}
		is trace-class by an application of the triangle inequality. It is however not clear that the above is equal to $\D^{-1} S \D^{-1}$ and that $S$ maps $\operatorname{Dom}(\D)$ into $\operatorname{Dom}(\D^{-1})$ when the sum defining $S$ is infinite. For a fixed $\psi \in \mathcal{H}$, the partial sums
		$$
		(S\psi)_M = \sum_{n=1}^M s_n \langle \psi, \xi_n \rangle \xi_n
		$$
		converge to $S \psi$ as $M \to \infty$ by definition and moreover, $(S \psi)_M$ is admissible for each $M$.
		
		The corresponding sequence of partial sums $\D^{-1}(S \psi)_M$ also converges in $\mathcal{H}$ since
		\begin{align*}
		    \sum_n s_n |\langle \psi, \xi_n \rangle| \Vert \D^{-1} \xi_n \Vert &\leq \left(\sum_n |\langle \psi, \xi_n \rangle|^2 \right)^{1/2}\left( \sum_n s_n^2 \Vert \D^{-1}\xi_n \Vert^2 \right)^{1/2}\\
		    &\leq C(S) \Vert \psi \Vert \left(\sum_n s_n \Vert \D^{-1} \xi_n \Vert^2 \right)^{1/2}
		\end{align*}
		where $C(S)$ is some constant depending only on $S$. By Theorem \ref{thm:duflo_moore}, the Duflo-Moore operator $\D^{-1}$ is closed and hence $S\psi$ is admissible with
		\begin{align}\label{eq:admissible_xi_n_respects_duflo_moore}
		    \D^{-1} S \psi = \sum_n s_n \langle \psi, \xi_n \rangle \D^{-1} \xi_n.
		\end{align}
		Now to show that $\D^{-1} S \D^{-1}$ is bounded on $\operatorname{Dom}(\D^{-1})$, let $\phi \in \operatorname{Dom}(\D^{-1})$ and note that by \eqref{eq:admissible_xi_n_respects_duflo_moore},
		\begin{align*}
		    \D^{-1} S \D^{-1} \phi &= \sum_n s_n \langle \D^{-1} \phi, \xi_n \rangle \D^{-1} \xi_n\\
		    &= \sum_n s_n \langle \phi, \D^{-1} \xi_n \rangle \D^{-1} \xi_n\\
		    &= \left(\sum_n s_n (\D^{-1} \xi_n) \otimes (\D^{-1} \xi_n)\right)\phi
		\end{align*}
		which we recognize as \eqref{eq:DSD_ideal_form}. This equality can be extended from the dense subspace $\operatorname{Dom}(\D^{-1})$ to all of $\mathcal{H}$ by denseness since
		$$
		\Vert \D^{-1} S \D^{-1} \phi \Vert \leq \sum_n s_n \Vert \phi \Vert \Vert \D^{-1} \xi_n \Vert^2 \leq \Vert \phi \Vert \sum_n s_n \Vert \D^{-1} \xi_n \Vert^2 < \infty
		$$
		by Cauchy-Schwarz and assumption.
	\end{proof}
    As a corollary, we have the following converse of Corollary \ref{corollary:workhorse_alternative}.
	\begin{corollary}\label{corollary:operator_convolution_integrability_implies_admissibility}
		Let $T$ be a non-zero positive trace-class operator and let $S$ be a non-zero positive compact operator. If
		$$
		\int_G T \star_G S(x) \dmr(x) < \infty,
		$$
		then $S$ is admissible with
		\begin{align}\label{eq:positive_admissibility_constant_implicit}
		    \tr(\D^{-1} S \D^{-1}) = \frac{1}{\tr(T)} \int_G T \star_G S(x) \dmr(x).
		\end{align}
		In particular, if $S$ is a non-zero, positive trace-class operator, then $S$ is admissible if and only
		if $S \star_G S \in L^1_r(G)$.
	\end{corollary}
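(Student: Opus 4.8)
The plan is to unfold the spectral decompositions of $T$ and $S$ and then invoke the sharp form of the Duflo--Moore square-integrability criterion together with Proposition \ref{prop:char_of_admissible_positive_compact}. Write $T = \sum_m t_m\,\eta_m\otimes\eta_m$ and $S = \sum_n s_n\,\xi_n\otimes\xi_n$ with $t_m,s_n>0$ and $\{\eta_m\}$, $\{\xi_n\}$ orthonormal. Computing $\tr(T\sigma(x)^*S\sigma(x))$ in the eigenbasis of $T$ and using Lemma \ref{lemma:rank_one_convolved_with_operator} and linearity exactly as in the proof of Proposition \ref{prop:char_of_admissible_positive_compact}, one obtains the pointwise identity
$$
T\star_G S(x) = \sum_{m,n} t_m s_n\,\bigl|\langle\sigma(x)\eta_m,\xi_n\rangle\bigr|^2,
$$
in which every term is non-negative.

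Next I would integrate this over $G$ and interchange the sum with the integral by Tonelli's theorem. For a fixed pair $(m,n)$ one has $\int_G|\langle\sigma(x)\eta_m,\xi_n\rangle|^2\dmr(x)=\int_G|\langle\eta_m,\sigma(x)^*\xi_n\rangle|^2\dmr(x)$, which by Theorem \ref{thm:duflo_moore} equals $\Vert\eta_m\Vert^2\Vert\D^{-1}\xi_n\Vert^2=\Vert\D^{-1}\xi_n\Vert^2$ when $\xi_n\in\operatorname{Dom}(\D^{-1})$ and equals $+\infty$ otherwise (this is precisely part (i) of that theorem: the admissible vectors are exactly $\operatorname{Dom}(\D^{-1})$). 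Hence, with the convention that the right-hand side is $+\infty$ if some $\xi_n\notin\operatorname{Dom}(\D^{-1})$,
$$
\int_G T\star_G S(x)\dmr(x) = \Bigl(\sum_m t_m\Bigr)\Bigl(\sum_n s_n\,\Vert\D^{-1}\xi_n\Vert^2\Bigr) = \tr(T)\sum_n s_n\,\Vert\D^{-1}\xi_n\Vert^2 .
$$
Since $T$ is non-zero positive trace-class we have $\tr(T)\in(0,\infty)$, and the left-hand side is finite by hypothesis, so $\sum_n s_n\Vert\D^{-1}\xi_n\Vert^2<\infty$; in particular each $\xi_n$ lies in $\operatorname{Dom}(\D^{-1})$, i.e.\ is an admissible vector.

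Proposition \ref{prop:char_of_admissible_positive_compact} now applies and gives that $S$ is admissible, and the computation inside its proof identifies $\tr(\D^{-1}S\D^{-1})=\sum_n s_n\Vert\D^{-1}\xi_n\Vert^2$ (this is the trace of $\sum_n s_n(\D^{-1}\xi_n)\otimes(\D^{-1}\xi_n)=\D^{-1}S\D^{-1}$). Combining this with the displayed identity yields $\tr(\D^{-1}S\D^{-1})=\tr(T)^{-1}\int_G T\star_G S(x)\dmr(x)$, which is \eqref{eq:positive_admissibility_constant_implicit}. For the final assertion, let $S$ be non-zero positive trace-class and take $T=S$: if $S$ is admissible then $S\star_G S\in L^1_r(G)$ by Corollary \ref{corollary:workhorse_alternative}, while conversely, if $S\star_G S\in L^1_r(G)$, the case $T=S$ of what was just proved shows that $S$ is admissible.

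The one genuinely delicate point is the use of the two-sided Duflo--Moore dichotomy: that $\int_G|\langle\eta_m,\sigma(x)^*\xi_n\rangle|^2\dmr(x)$ is finite \emph{if and only if} $\xi_n\in\operatorname{Dom}(\D^{-1})$, in which case it equals $\Vert\D^{-1}\xi_n\Vert^2$. Theorem \ref{thm:duflo_moore} as stated gives the orthogonality relation only for vectors in $\operatorname{Dom}(\D^{-1})$; the divergence off the domain is the content of part (i) (the characterization of admissible vectors), so the argument should be phrased to make clear that this is what is being invoked, and to keep careful track of the $+\infty$ convention until the finiteness hypothesis is used. Everything else --- the pointwise formula, the Tonelli interchange, and the bookkeeping with $\tr(T)$ and Proposition \ref{prop:char_of_admissible_positive_compact} --- is routine.
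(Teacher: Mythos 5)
Your proposal is correct and follows essentially the same route as the paper: expand $S$ (and $T$) in their spectral decompositions, interchange sum and integral by Tonelli, evaluate the inner integrals via the Duflo--Moore relation, conclude admissibility from Proposition \ref{prop:char_of_admissible_positive_compact}, and obtain the constant from Corollary \ref{corollary:workhorse_alternative} (or, equivalently, from the identification $\tr(\D^{-1}S\D^{-1})=\sum_n s_n\Vert\D^{-1}\xi_n\Vert^2$). Your explicit handling of the dichotomy that the integral diverges when $\xi_n\notin\operatorname{Dom}(\D^{-1})$ is a point the paper leaves implicit, and it is a welcome clarification rather than a deviation.
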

	\begin{proof}
		Since $T$ and $S$ both are positive trace-class operators, we can expand them in their singular value decompositions with $S = \sum_n s_n \xi_n \otimes \xi_n$ and move the sum outside the integral using Tonelli's theorem to obtain
		\begin{align*}
		    \int_G T \star_G S(x) \dmr(x) = \tr(T) \sum_n s_n \Vert \D^{-1} \xi_n \Vert^2 < \infty
		\end{align*}
		where we used the Duflo-Moore orthogonality relation \eqref{eq:duflo_ortho_right} to compute the resulting inner integral. By Proposition \ref{prop:char_of_admissible_positive_compact}, we conclude that $S$ is admissible. Lastly Corollary \ref{corollary:workhorse_alternative} yields the equality \eqref{eq:positive_admissibility_constant_implicit}.
	\end{proof}
	Admissibility of $S$ does not automatically imply admissibility of $f \star_G S$ for $f \in L^p_r(G)$ as illustrated by the following proposition.
	\begin{proposition}\label{prop:function_operator_convolution_of_admissible_is_admissible}
		Suppose $f \in L^1_\ell(G) \cap L^1_r(G)$ be a non-zero and non-negative function. If $S$ is a positive, admissible trace-class operator on $\mathcal{H}$, then so is $f \star_G S$ with
		\begin{align}\label{eq:f_conv_S_trace_equality}
		    \tr(\D^{-1}(f \star_G S)\D^{-1}) = \int_G f(x) \dml(x) \tr(\D^{-1} S\D^{-1}).
		\end{align}
	\end{proposition}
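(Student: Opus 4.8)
The plan is to exhibit $f \star_G S$ as a non-zero positive trace-class operator and then apply Corollary \ref{corollary:operator_convolution_integrability_implies_admissibility}, which reduces admissibility of $f \star_G S$ to finiteness of $\int_G T \star_G (f \star_G S)(x)\dmr(x)$ for a single fixed non-zero positive trace-class operator $T$. Positivity of $f \star_G S$ is Lemma \ref{lemma:pos_pos_func_op_conv}, membership in $\mathcal{S}^1$ is Proposition \ref{prop:function_operator_convolution_trace_class}, and non-triviality follows from $\tr(f \star_G S) = \tr(S)\int_G f\dmr > 0$ by Proposition \ref{prop:trace_of_func_op_conv}. So fix any non-zero positive $T \in \mathcal{S}^1$.

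The computational heart of the argument is to unfold $T \star_G (f \star_G S)$. Inserting the weak definition of $f \star_G S$ into $\tr\big(T\sigma(x)^*(f\star_G S)\sigma(x)\big)$ and interchanging the trace with the $G$-integral — this interchange being justified by the absolute bound $\sum_n|\langle T\sigma(yx)^* S\sigma(yx)e_n,e_n\rangle| \le \|T\|_{\mathcal{S}^1}\|S\|_{\mathcal{S}^1}$, as in the proof of Proposition \ref{prop:function_operator_convolution_trace_class}, integrated against $f \in L^1_r(G)$ — and using $\sigma(x)^*\sigma(y)^* = \sigma(yx)^*$, I expect to arrive at
$$T \star_G (f \star_G S)(x) = \int_G f(y)\,(T \star_G S)(yx)\dmr(y).$$
Integrating this over $x$ against $\dmr$ and applying Tonelli's theorem (everything is non-negative, since $T\star_G S \ge 0$ by Lemma \ref{lemma:positive_operator_convolution_is_positive} and $f \ge 0$), the crucial step is the inner integral $\int_G (T\star_G S)(yx)\dmr(x)$: writing $\dmr(x) = \Delta_G(x^{-1})\dml(x)$, substituting $u = yx$, and invoking left-invariance of $\mu_\ell$ together with $\Delta_G(u^{-1}y) = \Delta_G(u^{-1})\Delta_G(y)$ turns it into $\Delta_G(y)\int_G (T\star_G S)(u)\dmr(u)$.

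Putting these together, and using $\Delta_G(y)\dmr(y) = \dml(y)$ to collapse $\int_G f(y)\Delta_G(y)\dmr(y) = \int_G f\dml$, gives
$$\int_G T \star_G (f \star_G S)(x)\dmr(x) = \left(\int_G f\dml\right)\int_G (T\star_G S)(u)\dmr(u).$$
By Corollary \ref{corollary:workhorse_alternative}, applicable since $S$ is admissible and $T \in \mathcal{S}^1$, the last integral equals $\tr(T)\tr(\D^{-1}S\D^{-1}) < \infty$, so the left-hand side is finite. Corollary \ref{corollary:operator_convolution_integrability_implies_admissibility}, applied to the non-zero positive trace-class operator $f\star_G S$ and to $T$, then yields that $f\star_G S$ is admissible with
$$\tr\big(\D^{-1}(f\star_G S)\D^{-1}\big) = \frac{1}{\tr(T)}\int_G T \star_G (f\star_G S)(x)\dmr(x) = \left(\int_G f\dml\right)\tr(\D^{-1}S\D^{-1}),$$
which is \eqref{eq:f_conv_S_trace_equality}.

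The routine parts are the Fubini/Tonelli interchanges; the one place requiring genuine care is the modular-function bookkeeping in the change of variables, and this is not merely technical — it is precisely the mechanism forcing the \emph{left} Haar measure $\mu_\ell$ (rather than $\mu_r$) to appear in the final identity, so that in the unimodular case the statement collapses to the expected one.
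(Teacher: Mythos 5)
Your proposal is correct and follows essentially the same route as the paper: establish positivity and trace-class membership via Lemma \ref{lemma:pos_pos_func_op_conv} and Proposition \ref{prop:function_operator_convolution_trace_class}, unfold $T \star_G (f \star_G S)$ into $\int_G f(y)\,(T\star_G S)(yx)\dmr(y)$, integrate with the modular-function change of variables to produce $\int_G f\dml$, and conclude via Corollaries \ref{corollary:workhorse_alternative} and \ref{corollary:operator_convolution_integrability_implies_admissibility}. Your explicit remark that $f\star_G S \neq 0$ (via Proposition \ref{prop:trace_of_func_op_conv}) is a small point the paper leaves implicit, but the argument is the same.
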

	\begin{proof}
        That $f \star_G S$ is trace-class follows from Proposition \ref{prop:function_operator_convolution_trace_class} while positivity follows from Lemma \ref{lemma:pos_pos_func_op_conv}. Equation \eqref{eq:f_conv_S_trace_equality} follow if we can show that for $T$ non-zero, positive and trace-class,
		\begin{align}\label{eq:admissibility_criteria_subgoal}
		    \int_G T \star_G (f \star_G S)(y) \dmr(y) = \tr(T) \int_G f(x)\dmr(x) \tr(\D^{-1}S\D^{-1})
		\end{align}
		by applying Corollary \ref{corollary:operator_convolution_integrability_implies_admissibility} to $f \star_G S$ which also yields admissibility of $f \star_G S$. To show \eqref{eq:admissibility_criteria_subgoal}, we note that
		\begin{align*}
		    T \star_G (f \star_G S)(y) &= \tr\left( T \sigma(y)^* \int_G f(x) \sigma(x)^* S \sigma(x) \dmr(x)  \sigma(y) \right)\\
		    &= \int_G f(x) \tr\big( \sigma(x y)^* S \sigma(x y)\big) \dmr(x)\\
		    &= \int_G f(x) T \star_G S(xy) \dmr(x).
		\end{align*}
		From here we can integrate $T \star_G (f \star_G S)$ to find
		\begin{align*}
		    \int_G T \star_G (f \star_G S)(y) \dmr(y) &= \int_G   \int_G f(x) T \star_G S(x) \dmr(x y) \dmr(y)\\
		    &=\int_G f(x) \int_G T \star_G S(xy) \dmr(y) \dmr(x)\\
		    &= \int_G f(x) \dml(x)\tr(T)\tr(\D^{-1} S \D^{-1})
		\end{align*}
		where we used the change of variables $z = xy$ and the relations between the left and right Haar measure from Section \ref{sec:haar_measure}.
	\end{proof}
	
	\subsection{Interpolated convolution mapping properties}\label{sec:convol_mapping_properties}
	With the machinery of admissible operators in place, we can establish the remaining promised mapping properties from Section \ref{sec:convolutions} and generalize the inequalities
	\begin{align*}
	    \Vert f \star_G S \Vert_{\mathcal{S}^1} &\leq \Vert f \Vert_{L^1_r(G)} \Vert S \Vert_{\mathcal{S}^1},\\
	    \Vert T \star_G S \Vert_{L^\infty(G)} &\leq \Vert T \Vert_{B(\mathcal{H})} \Vert S \Vert_{\mathcal{S}^1}
	\end{align*}
	from Proposition \ref{prop:function_operator_convolution_trace_class} and Lemma \ref{lemma:operator_convolution_infinity_estimate} to all $\mathcal{S}^p$ and $L^p$-spaces. These results are generalizations of \cite[Prop. 4.16, Lem. 4.17, Prop. 4.18]{Berge2022} which treat the affine case and \cite[Prop. 4.2]{Luef2018} in the Weyl-Heisenberg case.
	\begin{proposition}\label{prop:interpolation_func-func_op-op_1}
	    Let $1 \leq p \leq \infty$ and let $q$ be its conjugate exponent given by $\frac{1}{p} + \frac{1}{q} = 1$. If $S \in \mathcal{S}^p, T \in \mathcal{S}^q$ and $f \in L^1_r(G)$, then the following holds:
	    \begin{enumerate}[label=(\roman*)]
	        \item\label{item:youngs_inequality} $f \star_G S \in \mathcal{S}^p$ with $\Vert f \star_G S \Vert_{\mathcal{S}^p} \leq \Vert f \Vert_{L^1_r(G)} \Vert S \Vert_{\mathcal{S}^p}$.
	        \item $T \star_G S \in L^\infty(G)$ with $\Vert T \star_G S \Vert_{L^\infty(G)} \leq \Vert S \Vert_{\mathcal{S}^p} \Vert T \Vert_{\mathcal{S}^q}$.
	    \end{enumerate}
	\end{proposition}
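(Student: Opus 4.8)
The plan is to reduce both parts to the Schatten-class H\"older inequality together with the unitary invariance of the Schatten norms, $\Vert \sigma(x)^* A \sigma(x)\Vert_{\mathcal{S}^r} = \Vert A\Vert_{\mathcal{S}^r}$ for every $r$, and, for \ref{item:youngs_inequality}, a complex interpolation argument anchored at the two endpoints already established in the paper.

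\emph{Part (ii).} For fixed $x\in G$ the operator $\sigma(x)^* S\sigma(x)$ lies in $\mathcal{S}^p$ with the same norm as $S$, so $T\sigma(x)^* S\sigma(x)$ is the product of an $\mathcal{S}^q$ and an $\mathcal{S}^p$ operator, hence trace class, and H\"older's inequality for Schatten classes gives
$$
|T\star_G S(x)| = |\tr(T\sigma(x)^* S\sigma(x))| \le \Vert T\Vert_{\mathcal{S}^q}\,\Vert \sigma(x)^* S\sigma(x)\Vert_{\mathcal{S}^p} = \Vert T\Vert_{\mathcal{S}^q}\,\Vert S\Vert_{\mathcal{S}^p}.
$$
Taking the supremum over $x\in G$ yields the claim; the case $p=\infty$ is Lemma \ref{lemma:operator_convolution_infinity_estimate}.

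\emph{Part (i).} I would first record the two endpoints of an interpolation scale in the operator variable, keeping $f$ fixed. The case $p=1$ is Proposition \ref{prop:function_operator_convolution_trace_class}. For $p=\infty$, observe that the weak integral of Section \ref{sec:vector_integration} makes sense for any $S\in B(\mathcal{H})$, since $x\mapsto \sigma(x)^* S\sigma(x)$ is strongly continuous and bounded by $\Vert S\Vert_{B(\mathcal{H})}$, and then for $\psi,\phi\in\mathcal{H}$
$$
|\langle (f\star_G S)\psi,\phi\rangle| \le \int_G |f(x)|\,\Vert S\Vert_{B(\mathcal{H})}\,\Vert\psi\Vert\,\Vert\phi\Vert\,\dmr(x) = \Vert f\Vert_{L^1_r(G)}\,\Vert S\Vert_{B(\mathcal{H})}\,\Vert\psi\Vert\,\Vert\phi\Vert,
$$
so $\Vert f\star_G S\Vert_{B(\mathcal{H})}\le \Vert f\Vert_{L^1_r(G)}\Vert S\Vert_{B(\mathcal{H})}$. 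Thus the linear map $\Phi_f\colon S\mapsto f\star_G S$ is a single well-defined operator on the compatible couple $(\mathcal{S}^1,B(\mathcal{H}))$, bounded on each endpoint with norm at most $\Vert f\Vert_{L^1_r(G)}$. Since $[\mathcal{S}^1,\mathcal{S}^\infty]_\theta=\mathcal{S}^p$ for $\tfrac1p=1-\theta$, complex interpolation gives $\Phi_f\colon\mathcal{S}^p\to\mathcal{S}^p$ with norm $\le\Vert f\Vert_{L^1_r(G)}$, which is \ref{item:youngs_inequality}. As an alternative for $1\le p<\infty$ that avoids the interpolation scale, one can instead use the duality formula $\Vert A\Vert_{\mathcal{S}^p}=\sup\{|\tr(AT)|:\Vert T\Vert_{\mathcal{S}^q}=1\}$ and interchange trace and integral exactly as in the proof of Proposition \ref{prop:function_operator_convolution_trace_class} (Tonelli, then the Schatten H\"older bound on each slice), obtaining $|\tr((f\star_G S)T)|\le\int_G|f(x)|\,\Vert S\Vert_{\mathcal{S}^p}\Vert T\Vert_{\mathcal{S}^q}\,\dmr(x)$.

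I expect the only genuine technical points to be (a) checking that the weak definition of $f\star_G S$ extends unambiguously from $\mathcal{S}^1$ to all of $B(\mathcal{H})$ so that $\Phi_f$ really is a single linear map on the interpolation couple, and (b) in the duality variant, justifying the Fubini/Tonelli interchange of trace and integral; both are dispatched exactly as in Section \ref{sec:convolutions}. Everything else is bookkeeping with H\"older's inequality and unitary invariance of Schatten norms.
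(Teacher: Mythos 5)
Your argument is correct, and part (ii) is exactly the paper's route: the paper cites H\"older's inequality for Schatten ideals (\cite[Thm. 2.8]{Simon2005}), which is precisely your computation with unitary invariance. For part (i), however, you take a genuinely different path. The paper treats $f \star_G S = \int_G f(x)\,\sigma(x)^* S \sigma(x)\dmr(x)$ as a Bochner integral with values in the Banach space $\mathcal{S}^p$ and invokes the triangle inequality for Bochner integrals (\cite[Prop. 1.2.2]{Hytnen2016}) together with $\Vert\sigma(x)^* S\sigma(x)\Vert_{\mathcal{S}^p} = \Vert S\Vert_{\mathcal{S}^p}$, handling $p=\infty$ by the same weak estimate you give. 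Your complex-interpolation argument instead anchors at the $p=1$ endpoint (Proposition \ref{prop:function_operator_convolution_trace_class}) and the $p=\infty$ weak bound, and needs two imported facts: the identification $[\mathcal{S}^1, B(\mathcal{H})]_\theta = \mathcal{S}^p$ and the consistency of $\Phi_f$ on the couple, which you correctly flag; the paper's route instead needs strong measurability of $x\mapsto f(x)\sigma(x)^*S\sigma(x)$ as an $\mathcal{S}^p$-valued map, which is why it leans on the cited reference. Your duality variant is arguably the most self-contained of the three, since it reruns the paper's own proof of Proposition \ref{prop:function_operator_convolution_trace_class} with the slice bound $\sum_n |\langle T^*\sigma(x)^* S\sigma(x) e_n, e_n\rangle| \le \Vert S\Vert_{\mathcal{S}^p}\Vert T\Vert_{\mathcal{S}^q}$ replacing the $\mathcal{S}^1$ bound; the only point worth making explicit there is that the duality characterization of $\Vert\cdot\Vert_{\mathcal{S}^p}$ is being applied to an operator already known to be bounded (by your $p=\infty$ estimate), so it legitimately yields membership in $\mathcal{S}^p$ and not just a formal bound.
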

	\begin{proof}
	    The first inequality follows from \cite[Prop. 1.2.2]{Hytnen2016} for $p < \infty$ while the $p = \infty$ case can be deduced by an elementary estimate on the weak action $\langle f \star_G S \psi, \phi \rangle$. Meanwhile the second inequality follows from \cite[Thm. 2.8]{Simon2005}.
	\end{proof}
	Item \ref{item:youngs_inequality} above should be seen as a version of Young's inequality for function-operator convolutions and shows that $\mathcal{S}^p$ is a Banach module over $L^1_r(G)$ via the mapping $(f, S) \mapsto f \star_G S$. We now turn our attention to the case where $f$ and operator-operator convolutions are in $L^p_r(G)$ for $p \neq 1$. For interpolation purposes, we will first need the following lemma. 
 
	\begin{lemma}\label{lemma:quantization_admissible_weak_definition}
		Let $S \in \mathcal{S}^1$ and $f \in L^\infty(G)$. Define the operator $f \star_G \D S \D$ weakly for $\psi, \phi \in \operatorname{Dom}(\D)$ by
		$$
		\langle f \star_G \D S \D \psi, \phi \rangle = \int_G f(x) \langle S\mathcal{D}\sigma(x)\psi, \D \sigma(x)\phi \rangle \dmr(x).
		$$
		Then $f \star_G \D S \D$ uniquely extends to a bounded linear operator on $\mathcal{H}$ satisfying
		$$
		\Vert f \star_G \D S \D \Vert_{B(\mathcal{H})} \leq \Vert f \Vert_{L^\infty(G)} \Vert S \Vert_{\mathcal{S}^1}.
		$$
		In particular, if $R$ is an admissible operator, then $f \star_G R \in B(\mathcal{H})$ with
		$$
		\Vert f \star_G R \Vert_{B(\mathcal{H})} \leq \Vert f \Vert_{L^\infty(G)} \Vert \D^{-1} R \D^{-1} \Vert_{\mathcal{S}^1}.
		$$
	\end{lemma}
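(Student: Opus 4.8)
The plan is to realize $f\star_G\mathcal{D}S\mathcal{D}$ as the bounded operator associated to the sesquilinear form
$$b(\psi,\phi) = \int_G f(x)\,\langle S\mathcal{D}\sigma(x)\psi,\mathcal{D}\sigma(x)\phi\rangle\dmr(x),\qquad \psi,\phi\in\operatorname{Dom}(\mathcal{D}),$$
so the real content is the bound $|b(\psi,\phi)|\le\|f\|_{L^\infty(G)}\|S\|_{\mathcal{S}^1}\|\psi\|\|\phi\|$. Since $\operatorname{Dom}(\mathcal{D})$ is dense by Theorem \ref{thm:duflo_moore}, and since the covariance relation \eqref{eq:duflo_moore_covariance_relation} forces $\sigma(x)$ to preserve $\operatorname{Dom}(\mathcal{D})$ (so that $\mathcal{D}\sigma(x)\psi$ makes sense and $x\mapsto\mathcal{D}\sigma(x)\psi$ is continuous, hence the integrand is measurable), this bound simultaneously shows $b$ is well-defined by an absolutely convergent integral and lets the standard Riesz correspondence for bounded sesquilinear forms produce a unique $f\star_G\mathcal{D}S\mathcal{D}\in B(\mathcal{H})$ with $\|f\star_G\mathcal{D}S\mathcal{D}\|_{B(\mathcal{H})}\le\|f\|_{L^\infty(G)}\|S\|_{\mathcal{S}^1}$.

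The key one-variable estimate I would prove first is: for $\psi\in\operatorname{Dom}(\mathcal{D})$ and \emph{arbitrary} $\eta\in\mathcal{H}$,
$$\int_G|\langle\mathcal{D}\sigma(x)\psi,\eta\rangle|^2\dmr(x)\le\|\psi\|^2\|\eta\|^2.$$
For $\eta\in\operatorname{Dom}(\mathcal{D})$ I move $\mathcal{D}$ across by self-adjointness, $\langle\mathcal{D}\sigma(x)\psi,\eta\rangle=\langle\psi,\sigma(x)^*\mathcal{D}\eta\rangle$, and since $\mathcal{D}\eta\in\operatorname{Ran}(\mathcal{D})=\operatorname{Dom}(\mathcal{D}^{-1})$ is an admissible vector, the Duflo--Moore orthogonality relation \eqref{eq:duflo_ortho_right} gives the exact value $\|\psi\|^2\|\mathcal{D}^{-1}\mathcal{D}\eta\|^2=\|\psi\|^2\|\eta\|^2$. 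For a general $\eta$ I approximate by $\eta_k\in\operatorname{Dom}(\mathcal{D})$ with $\eta_k\to\eta$; then $\langle\mathcal{D}\sigma(x)\psi,\eta_k\rangle\to\langle\mathcal{D}\sigma(x)\psi,\eta\rangle$ pointwise in $x$, and Fatou's lemma upgrades the identity for $\eta_k$ to the inequality for $\eta$. (Equivalently, one can first rewrite $\mathcal{D}\sigma(x)\psi$ via the covariance relation so that the admissible vector becomes $\mathcal{D}\psi$; the density/Fatou route keeps the modular bookkeeping lighter.)

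Granting this, I would expand $S$ in its singular value decomposition $S=\sum_n s_n\,\xi_n\otimes\eta_n$, so that $\langle S\mathcal{D}\sigma(x)\psi,\mathcal{D}\sigma(x)\phi\rangle=\sum_n s_n\langle\mathcal{D}\sigma(x)\psi,\eta_n\rangle\langle\xi_n,\mathcal{D}\sigma(x)\phi\rangle$. Bounding $|f|\le\|f\|_{L^\infty(G)}$, interchanging sum and integral by Tonelli, applying Cauchy--Schwarz in $x$ to each summand and then the one-variable estimate to both factors (noting $\langle\xi_n,\mathcal{D}\sigma(x)\phi\rangle=\overline{\langle\mathcal{D}\sigma(x)\phi,\xi_n\rangle}$ and $\|\xi_n\|=\|\eta_n\|=1$) bounds the total by $\|f\|_{L^\infty(G)}\|\psi\|\|\phi\|\sum_n s_n=\|f\|_{L^\infty(G)}\|S\|_{\mathcal{S}^1}\|\psi\|\|\phi\|$, which is exactly the required bound on $b$.

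For the final assertion, given an admissible $R$ I set $S:=\mathcal{D}^{-1}R\mathcal{D}^{-1}\in\mathcal{S}^1$ and interpret $f\star_G R$ as $f\star_G\mathcal{D}S\mathcal{D}$. For $\psi,\phi\in\operatorname{Dom}(\mathcal{D})$ one has $\sigma(x)\psi,\sigma(x)\phi\in\operatorname{Dom}(\mathcal{D})$, and unwinding the definitions --- using that $R$ maps $\operatorname{Dom}(\mathcal{D})$ into $\operatorname{Dom}(\mathcal{D}^{-1})$ and that $\mathcal{D}^{-1}\mathcal{D}$, $\mathcal{D}\mathcal{D}^{-1}$ act as the identity on the pertinent subspaces --- one checks $\langle S\mathcal{D}\sigma(x)\psi,\mathcal{D}\sigma(x)\phi\rangle=\langle\sigma(x)^*R\sigma(x)\psi,\phi\rangle$, so the first part applies and gives $\|f\star_G R\|_{B(\mathcal{H})}\le\|f\|_{L^\infty(G)}\|\mathcal{D}^{-1}R\mathcal{D}^{-1}\|_{\mathcal{S}^1}$. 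The main obstacle is the one-variable estimate for non-admissible $\eta$: a direct appeal to \eqref{eq:duflo_ortho_right} would demand $\eta\in\operatorname{Dom}(\mathcal{D}^{-1})$, which in general fails, and the whole point of flanking $S$ by $\mathcal{D}$'s is precisely to shift the admissibility burden onto $\mathcal{D}\psi$ (or, via the density/Fatou step, to remove it); the remaining work is the routine but slightly delicate tracking of which vectors lie in $\operatorname{Dom}(\mathcal{D})$ versus $\operatorname{Dom}(\mathcal{D}^{-1})$.
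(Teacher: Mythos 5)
Your argument is correct, and it reaches the bound by a genuinely different route than the paper. The paper's proof uses the covariance relation \eqref{eq:duflo_moore_covariance_relation} together with the change of variables $x \mapsto x^{-1}$ to rewrite $\langle f \star_G \D S \D\, \psi, \phi\rangle$ as $\int_G f(x^{-1})\,\big(S \star_G (\D\psi \otimes \D\phi)\big)(x)\dmr(x)$, and then simply invokes Corollary \ref{corollary:workhorse_alternative} with the admissible rank-one operator $\D\psi\otimes\D\phi$, whose admissibility constant is $\Vert\psi\Vert\Vert\phi\Vert$; the ``in particular'' clause is left implicit there. You instead estimate the sesquilinear form directly: singular value decomposition of $S$, Tonelli and Cauchy--Schwarz in $x$, and your one-variable inequality $\int_G |\langle \D\sigma(x)\psi, \eta\rangle|^2 \dmr(x) \le \Vert\psi\Vert^2\Vert\eta\Vert^2$ for \emph{arbitrary} $\eta \in \mathcal{H}$, which you get from \eqref{eq:duflo_ortho_right} when $\eta \in \operatorname{Dom}(\D)$ (moving $\D$ across by self-adjointness, with $\D\eta \in \operatorname{Dom}(\D^{-1})$ admissible) and upgrade by density and Fatou. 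The underlying ingredients (Duflo--Moore orthogonality, SVD, Cauchy--Schwarz) are the same as those buried inside Theorem \ref{thm:workhorse}, but your organization avoids the covariance relation and all modular-function bookkeeping and is self-contained, at the cost of the small extra Fatou lemma; the paper's route is shorter given the machinery it has already built. Two further points in your favor: you correctly note that the covariance relation forces $\sigma(x)$ to preserve $\operatorname{Dom}(\D)$ (needed even to make sense of the integrand), and your explicit verification that $\langle S\D\sigma(x)\psi, \D\sigma(x)\phi\rangle = \langle \sigma(x)^* R \sigma(x)\psi, \phi\rangle$ for $S = \D^{-1}R\D^{-1}$ pins down precisely in what sense $f \star_G R$ is meant for admissible $R$, which the paper does not spell out.
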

	\begin{proof}
		Using equation \eqref{eq:duflo_moore_covariance_relation}, we can rewrite $\langle f \star_G \D S \D \psi, \phi \rangle$ as
		\begin{align*}
		    \langle f \star_G \D S \D \psi, \phi \rangle &= \int_G f(x) \langle S \sigma(x) \mathcal{D}\psi, \sigma(x) \D \phi \rangle \dml(x)\\
		    &= \int_G f(x^{-1}) \langle S \sigma(x)^* \mathcal{D}\psi, \sigma(x)^* \D \phi \rangle \dmr(x)\\
		    &= \int_G f(x^{-1})(S \star_G (\D \psi \otimes \D \phi))(x) \dmr(x).
		\end{align*}
		To bound this, we note that $\D \psi \otimes \D \phi$ is an admissible operator and so by an elementary estimate of the integral and the use of Corollary \ref{corollary:workhorse_alternative}, we deduce that
		\begin{align*}
		    |\langle f \star_G \D S \D \psi, \phi \rangle| &\leq \Vert f \Vert_{L^\infty(G)}\Vert S \Vert_{\mathcal{S}^1} \Vert \D^{-1} (\D \psi \otimes \D \phi) \D^{-1} \Vert_{\mathcal{S}^1}\\
		    &= \Vert f \Vert_{L^\infty(G)}\Vert S \Vert_{\mathcal{S}^1} \Vert \psi \Vert \Vert \phi \Vert.
		\end{align*}
		That $f \star_G \D S \D$ extends uniquely follows from the denseness of $\operatorname{Dom}(\D)$ in $\mathcal{H}$.
	\end{proof}
	\begin{proposition}\label{prop:interpolated_schatten_memberships}
		Let $1 \leq p \leq \infty$ and let $q$ be its conjugate exponent given by $\frac{1}{p} + \frac{1}{q} = 1$. Suppose $S \in \mathcal{S}^1$ is admissible, $T \in \mathcal{S}^p$ and $f \in L_r^p(G)$.
		\begin{enumerate}[label=(\roman*)]
			\item\label{item:interpolation_function_operator} $f \star_G S \in \mathcal{S}^p$ with $\Vert f \star_G S \Vert_{\mathcal{S}^p} \leq \Vert f \Vert_{L_r^p(G)} \Vert S \Vert_{\mathcal{S}^1}^{1/p} \Vert \D^{-1} S \D^{-1} \Vert_{\mathcal{S}^1}^{1/q}$.
			\item\label{item:interpolation_operator_operator} $T \star_G S \in L_r^p(G)$ with $\Vert T \star_G S \Vert_{L_r^p(G)} \leq \Vert T \Vert_{\mathcal{S}^p} \Vert S \Vert_{\mathcal{S}^1}^{1/q} \Vert \D^{-1} S \D^{-1} \Vert_{\mathcal{S}^1}^{1/p}$.
		\end{enumerate}
	\end{proposition}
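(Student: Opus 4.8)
\emph{Strategy.} The plan is to deduce both statements by complex interpolation between the endpoints $p=1$ and $p=\infty$, which are already at our disposal from the earlier results, using that $(\mathcal{S}^1,\mathcal{S}^\infty)$ and $(L^1_r(G),L^\infty(G))$ form complex interpolation couples with $[\mathcal{S}^1,\mathcal{S}^\infty]_\theta=\mathcal{S}^p$ and $[L^1_r(G),L^\infty(G)]_\theta=L^p_r(G)$ for $\tfrac1p=1-\theta$, i.e.\ $\theta=\tfrac1q$ (see \cite{Simon2005} for the Schatten scale). Throughout I fix the admissible operator $S$ and regard $f\mapsto f\star_G S$, respectively $T\mapsto T\star_G S$, as a single linear map to which the abstract interpolation theorem can be applied.

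\emph{Part \ref{item:interpolation_function_operator}.} By Proposition \ref{prop:function_operator_convolution_trace_class} the map $f\mapsto f\star_G S$ is bounded from $L^1_r(G)$ to $\mathcal{S}^1$ with norm at most $\|S\|_{\mathcal{S}^1}$. By the final assertion of Lemma \ref{lemma:quantization_admissible_weak_definition}, applied with $R=S$, the same map is bounded from $L^\infty(G)$ to $\mathcal{S}^\infty=B(\mathcal{H})$ with norm at most $\|\D^{-1}S\D^{-1}\|_{\mathcal{S}^1}$. Since both boundings are realized by the same weak integral, they agree on $L^1_r(G)\cap L^\infty(G)$, so we genuinely have one linear map on the sum space, and complex interpolation gives, for $f\in L^p_r(G)$,
\[
\|f\star_G S\|_{\mathcal{S}^p}\le\|S\|_{\mathcal{S}^1}^{1-\theta}\,\|\D^{-1}S\D^{-1}\|_{\mathcal{S}^1}^{\theta}\,\|f\|_{L^p_r(G)}=\|f\|_{L^p_r(G)}\,\|S\|_{\mathcal{S}^1}^{1/p}\,\|\D^{-1}S\D^{-1}\|_{\mathcal{S}^1}^{1/q}.
\]

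\emph{Part \ref{item:interpolation_operator_operator}.} Here $T\star_G S(x)=\tr(T\sigma(x)^*S\sigma(x))$ is well defined for every $T\in B(\mathcal{H})$ because $\sigma(x)^*S\sigma(x)\in\mathcal{S}^1$. By Corollary \ref{corollary:workhorse_alternative} the map $T\mapsto T\star_G S$ is bounded from $\mathcal{S}^1$ to $L^1_r(G)$ with norm at most $\|\D^{-1}S\D^{-1}\|_{\mathcal{S}^1}$, while the trace inequality $|\tr(AB)|\le\|A\|_{B(\mathcal{H})}\|B\|_{\mathcal{S}^1}$ underlying Lemma \ref{lemma:operator_convolution_infinity_estimate} shows it is bounded from $\mathcal{S}^\infty=B(\mathcal{H})$ to $L^\infty(G)$ with norm at most $\|S\|_{\mathcal{S}^1}$. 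The two descriptions agree on $\mathcal{S}^1$, so complex interpolation yields, for $T\in\mathcal{S}^p$,
\[
\|T\star_G S\|_{L^p_r(G)}\le\|\D^{-1}S\D^{-1}\|_{\mathcal{S}^1}^{1-\theta}\,\|S\|_{\mathcal{S}^1}^{\theta}\,\|T\|_{\mathcal{S}^p}=\|T\|_{\mathcal{S}^p}\,\|S\|_{\mathcal{S}^1}^{1/q}\,\|\D^{-1}S\D^{-1}\|_{\mathcal{S}^1}^{1/p}.
\]

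\emph{Main point of care.} All the genuine analysis already resides in the endpoint estimates, so I expect no real obstacle. The two things needing attention are: (a) checking that each pair of endpoint bounds comes from one and the same linear map defined on the corresponding sum space — which is precisely where the common weak-integral description and the admissibility of $S$ enter, via Lemma \ref{lemma:quantization_admissible_weak_definition} — and (b) keeping the interpolation parameter straight, $\theta=1/q$, so that $[\mathcal{S}^1,\mathcal{S}^\infty]_\theta=\mathcal{S}^p$ lines up with $[L^1_r(G),L^\infty(G)]_\theta=L^p_r(G)$. For $p\in\{1,\infty\}$ no interpolation is needed and the claim is exactly the endpoint estimate.
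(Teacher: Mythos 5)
Your proposal is correct and follows essentially the same route as the paper: part \ref{item:interpolation_function_operator} by complex interpolation between Proposition \ref{prop:function_operator_convolution_trace_class} ($p=1$) and Lemma \ref{lemma:quantization_admissible_weak_definition} ($p=\infty$), and part \ref{item:interpolation_operator_operator} by interpolating between Corollary \ref{corollary:workhorse_alternative} and Lemma \ref{lemma:operator_convolution_infinity_estimate}, with the same exponent bookkeeping $\theta=1/q$. The only difference is that you spell out the compatibility of the endpoint maps, which the paper leaves implicit.
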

	
	\begin{proof}
		Item \ref{item:interpolation_function_operator} follows by complex interpolation between the $p=1$ case from Proposition \ref{prop:function_operator_convolution_trace_class} and the $p = \infty$ case which follows from Lemma \ref{lemma:quantization_admissible_weak_definition}.
		
		Similarly, item \ref{item:interpolation_operator_operator} follows by interpolating between the $p=1$ case from Corollary \ref{corollary:workhorse_alternative} and the $p = \infty$ case of Lemma \ref{lemma:operator_convolution_infinity_estimate}.
	\end{proof}
	With the interpolation results established, we can prove a generalization of \cite[Thm. 4.7]{Luef2018}, previously also noted in \cite{Werner1984} and \cite{Bayer2015} in the Weyl-Heisenberg case.
	\begin{proposition}\label{prop:convolution_adjoint}
        Let $S \in \mathcal{S}^1$ be admissible and define the maps
        \begin{align*}
            &\mathcal{A}_S : L^p_r(G) \to \mathcal{S}^p, \quad f \mapsto f \star_G S,\\
            &\mathcal{B}_S : \mathcal{S}^p \to L^p_r(G),\quad T \mapsto T \star_G S
        \end{align*}
        for $1 \leq p \leq \infty$. Then both maps are bounded and the adjoint of $\mathcal{A}_S : L^p_r(G) \to \mathcal{S}^p$ is given by $(\mathcal{A}_S)^* = \mathcal{B}_S : \mathcal{S}^q \to L^q_r(G)$ where $\frac{1}{p} + \frac{1}{q} = 1$ while the adjoint of $\mathcal{B}_S : \mathcal{S}^p \to L^p_r(G)$ is given by $(\mathcal{B}_S)^* = \mathcal{A}_S : L^q_r(G) \to \mathcal{S}^q$.
	\end{proposition}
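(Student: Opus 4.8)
\emph{Plan.} The plan is to dispose of boundedness at once and then reduce the whole statement to a single bilinear identity relating the two convolutions, from which both adjoint claims follow by unwinding the definition of the Banach-space adjoint. Boundedness of $\mathcal{A}_S$ on each $L^p_r(G)$ and of $\mathcal{B}_S$ on each $\mathcal{S}^p$, $1\le p\le\infty$, is precisely Proposition \ref{prop:interpolated_schatten_memberships} (which already absorbs the endpoints $p\in\{1,\infty\}$ via Proposition \ref{prop:function_operator_convolution_trace_class}, Lemma \ref{lemma:operator_convolution_infinity_estimate} and Lemma \ref{lemma:quantization_admissible_weak_definition}). Recalling that the duality brackets are $\langle A,B\rangle_{\mathcal{S}^p,\mathcal{S}^q}=\tr(AB)$ and $\langle f,g\rangle_{L^p_r,L^q_r}=\int_G f(x)g(x)\dmr(x)$, the assertion $(\mathcal{A}_S)^*=\mathcal{B}_S$ unwinds to the claim
$$
\tr\big((f\star_G S)\,T\big)=\int_G f(x)\,(T\star_G S)(x)\dmr(x)
$$
for all conjugate exponents and all $f\in L^p_r(G)$, $T\in\mathcal{S}^q$, while $(\mathcal{B}_S)^*=\mathcal{A}_S$ is the same identity with the exponents interchanged. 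So everything reduces to establishing this identity for each conjugate pair $1\le p\le\infty$.

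I would prove it first for $f\in L^1_r(G)$ and arbitrary $T\in B(\mathcal{H})$. Expanding the trace in an orthonormal basis $\{e_n\}$ and inserting the weak definition of the operator-valued integral gives $\tr((f\star_G S)T)=\sum_n\int_G f(x)\langle\sigma(x)^*S\sigma(x)Te_n,e_n\rangle\dmr(x)$, and the sum and integral may be interchanged because, exactly as in the proof of Proposition \ref{prop:function_operator_convolution_trace_class}, $\sum_n|\langle\sigma(x)^*S\sigma(x)Te_n,e_n\rangle|\le\Vert S\Vert_{\mathcal{S}^1}\Vert T\Vert_{B(\mathcal{H})}$ uniformly in $x$ while $f\in L^1_r(G)$; the remaining integrand equals $\tr(T\sigma(x)^*S\sigma(x))=(T\star_G S)(x)$ by cyclicity. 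To pass to $f\in L^p_r(G)$ with $p<\infty$ one extends by density, using that $L^1_r(G)\cap L^p_r(G)$ is dense in $L^p_r(G)$ and that both sides of the identity are continuous in $f$ over $L^p_r(G)$ — the left by boundedness of $\mathcal{A}_S:L^p_r(G)\to\mathcal{S}^p$ paired with $T\in\mathcal{S}^q$, the right by boundedness of $\mathcal{B}_S:\mathcal{S}^q\to L^q_r(G)$ and H\"older's inequality.

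The one conjugate pair not reached this way is $p=\infty$, $q=1$, i.e. $f\in L^\infty(G)$, $T\in\mathcal{S}^1$, and this is the step I expect to be the main obstacle, since $f\star_G S$ is then only defined weakly on $\operatorname{Dom}(\D)$ via Lemma \ref{lemma:quantization_admissible_weak_definition}. Here I would reduce to $T=\psi\otimes\phi$ by linearity and density of finite-rank operators in $\mathcal{S}^1$, both sides being $\mathcal{S}^1$-continuous in $T$ (the left since $f\star_G S$ is bounded, the right by the $L^1_r$-estimate in Corollary \ref{corollary:workhorse_alternative}). For rank-one $T$ the identity reads $\langle(f\star_G S)\psi,\phi\rangle=\int_G f(x)\langle S\sigma(x)\psi,\sigma(x)\phi\rangle\dmr(x)$ after invoking Lemma \ref{lemma:rank_one_convolved_with_operator}, which for $\psi,\phi\in\operatorname{Dom}(\D)$ is a rewriting of the weak definition in Lemma \ref{lemma:quantization_admissible_weak_definition}; since both sides are bounded in $(\psi,\phi)$ over $\mathcal{H}$ — the right by $\Vert(\psi\otimes\phi)\star_G S\Vert_{L^1_r(G)}\le\Vert\psi\Vert\,\Vert\phi\Vert\,\Vert\D^{-1}S\D^{-1}\Vert_{\mathcal{S}^1}$ — the density of $\operatorname{Dom}(\D)$ extends it to all of $\mathcal{H}$. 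Once the identity holds for every conjugate pair, $(\mathcal{A}_S)^*=\mathcal{B}_S$ and $(\mathcal{B}_S)^*=\mathcal{A}_S$ follow at once from the definition of the adjoint; apart from this endpoint, the only genuine work is the measurability and Fubini--Tonelli bookkeeping needed to justify interchanging trace and integral.
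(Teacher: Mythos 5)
Your proposal is correct and follows essentially the same route as the paper: boundedness via Proposition \ref{prop:interpolated_schatten_memberships}, then the duality identity $\tr\big((f\star_G S)T\big)=\int_G f(x)\,(T\star_G S)(x)\dmr(x)$ established for $f\in L^1_r(G)$ and trace-class/bounded $T$ by a Tonelli--Fubini interchange, and finally extension by density of $L^1_r(G)$ and $\mathcal{S}^1$. The only difference is that where the paper dispatches the $p=\infty$ endpoint with ``the result holds by duality,'' you spell it out via Lemma \ref{lemma:quantization_admissible_weak_definition} and rank-one reduction, which is a faithful (and slightly more detailed) elaboration rather than a different argument.
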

	
	\begin{proof}
	    Boundedness of the mappings follows from Proposition \ref{prop:interpolated_schatten_memberships}. We compute the adjoint of $\mathcal{A}_S$ by considering the duality brackets
	    $$
	    \big\langle (\mathcal{A}_S)^* T, f \big\rangle_{L^p_r(G), L^q_r(G)} = \big\langle T, \mathcal{A}_S f \big\rangle_{\mathcal{S}^p, \mathcal{S}^q}
	    $$
	    for $T \in \mathcal{S}^p$ and $f \in L^q_r(G)$. The statement in the other direction for $(\mathcal{B}_S)^*$ will then follow by the same argument.
	    
	    First, assume that $T \in \mathcal{S}^1$ and $f \in L^1_r(G)$. It then holds that
	    \begin{align*}
	        \big\langle T, \mathcal{A}_S f \big\rangle &= \tr\big(T \mathcal{A}_S f\big)\\
	        &= \sum_n \big\langle T f \star_G Se_n, e_n \big\rangle\\
	        &= \sum_n \int_G f(x) \big\langle T\sigma(x)^*S\sigma(x)e_n, e_n\big\rangle \dmr(x).
	    \end{align*}
	    To justify the use of Fubini's Theorem on the above, we use Tonelli's Theorem to note that
	    \begin{align*}
	        \sum_n \int_G \big|f(x) \big\langle T\sigma(x)^*S\sigma(x)e_n, e_n\big\rangle\big|\dmr(x) = \int_G |f(x)| \sum_n \big|\big\langle T\sigma(x)^*S\sigma(x)e_n, e_n\big\rangle\big| \dmr(x).
	    \end{align*}
	    The operator $T \sigma(x)^*S\sigma(x)$ is is trace-class and the Hilbert-Schmidt norm of $\sigma(x)^* S \sigma(x)$ is independent of $x$ and so the proof of \cite[Prop. 18.9]{conway2000a} yields that the sum is uniformly bounded. By the integrability of $f$, we deduce that the entire quantity is finite. We can hence apply Fubini's Theorem to obtain
	    \begin{align*}
	        \big\langle T, \mathcal{A}_S f \big\rangle_{\mathcal{S}^p, \mathcal{S}^q} &= \int_G f(x) \sum_n\big\langle T\sigma(x)^*S\sigma(x)e_n, e_n \big\rangle \dmr(x)\\
	        &=\int_G f(x) T \star S(x) \dmr(x) = \big\langle \mathcal{B}_S T, f \big\rangle_{L^p_r(G), L^q_r(G)}.
	    \end{align*}
	    By the density of $\mathcal{S}^1$ and $L^1_r(G)$, this result can be extended to all of $\mathcal{S}^p$ and $L^q_r(G)$ while for $p=\infty$ the result holds by duality.
	\end{proof}

	Lastly we show how we can weaken the conditions on Proposition \ref{prop:function_operator_convolution_trace_class} but still get that the function-operator convolution is a compact operator which generalizes part of \cite[Lem. 2.3]{Luef2021}. We remind the reader that $L^0(G)$ is the set of all $L^\infty(G)$ functions which vanish at infinity while $\mathcal{K}$ is the set of compact operators. 
	\begin{corollary}\label{corollary:looser_conditions_on_func-op_compactness}
	    Let $f : G \to \C$ and $S \in B(\mathcal{H})$ satisfy one of the following
	    \begin{enumerate}[label=(\roman*)]
	        \item\label{item:func-op-compactness_condition_L0S1} $f \in L^0(G)$ and $S \in \mathcal{S}^1$ is admissible,
	        \item\label{item:func-op-compactness_condition_L1K} $f \in L^1_r(G)$ and $S \in \mathcal{K}$.
	    \end{enumerate}
	    Then $f \star_G S \in \mathcal{K}$.
	\end{corollary}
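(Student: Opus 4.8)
The plan is to handle both cases by one device: realize $f \star_G S$ as the limit, in the operator norm of $B(\mathcal{H})$, of a family of operators already known to be trace-class and hence compact, and then use that $\mathcal{K}$ is norm-closed in $B(\mathcal{H})$. Two norm estimates do all the work: the ``Young-type'' bound $\Vert f \star_G T \Vert_{B(\mathcal{H})} \le \Vert f \Vert_{L^1_r(G)} \Vert T \Vert_{B(\mathcal{H})}$ for $f \in L^1_r(G)$ and $T \in B(\mathcal{H})$, which is the $p = \infty$ instance of Proposition \ref{prop:interpolation_func-func_op-op_1}\ref{item:youngs_inequality} (and in particular guarantees that $f \star_G T$ is a well-defined bounded operator in the weak sense of Section \ref{sec:vector_integration}), and the bound $\Vert g \star_G R \Vert_{B(\mathcal{H})} \le \Vert g \Vert_{L^\infty(G)} \Vert \D^{-1} R \D^{-1} \Vert_{\mathcal{S}^1}$ for $g \in L^\infty(G)$ and $R$ admissible, which is exactly Lemma \ref{lemma:quantization_admissible_weak_definition}.

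For case \ref{item:func-op-compactness_condition_L0S1}, fix $\varepsilon > 0$. Since $f \in L^0(G)$ vanishes at infinity, there is a compact $K \subset G$ with $|f(x)| < \varepsilon$ for $x \notin K$. The truncation $f\chi_K$ is bounded and supported on a set of finite right Haar measure, hence $f\chi_K \in L^1_r(G)$; as $S \in \mathcal{S}^1$, Proposition \ref{prop:function_operator_convolution_trace_class} gives $f\chi_K \star_G S \in \mathcal{S}^1 \subset \mathcal{K}$. The remainder $f - f\chi_K$ lies in $L^\infty(G)$ with $\Vert f - f\chi_K \Vert_{L^\infty(G)} \le \varepsilon$, so, since $S$ is admissible, Lemma \ref{lemma:quantization_admissible_weak_definition} yields $\Vert f \star_G S - f\chi_K \star_G S \Vert_{B(\mathcal{H})} \le \varepsilon \Vert \D^{-1} S \D^{-1} \Vert_{\mathcal{S}^1}$. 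Thus $f \star_G S$ lies within $\varepsilon \Vert \D^{-1} S \D^{-1} \Vert_{\mathcal{S}^1}$ of $\mathcal{K}$ for every $\varepsilon > 0$, and letting $\varepsilon \to 0$ gives $f \star_G S \in \mathcal{K}$.

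For case \ref{item:func-op-compactness_condition_L1K}, fix $\varepsilon > 0$ and pick a finite-rank operator $F$ with $\Vert S - F \Vert_{B(\mathcal{H})} < \varepsilon$, which exists since $S \in \mathcal{K}$. Finite-rank operators are trace-class, so $f \star_G F \in \mathcal{S}^1 \subset \mathcal{K}$ by Proposition \ref{prop:function_operator_convolution_trace_class}, while applying the Young-type bound to $S - F$ gives $\Vert f \star_G S - f \star_G F \Vert_{B(\mathcal{H})} \le \Vert f \Vert_{L^1_r(G)} \Vert S - F \Vert_{B(\mathcal{H})} < \varepsilon \Vert f \Vert_{L^1_r(G)}$. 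Again $f \star_G S$ is a norm-limit of compact operators, hence compact.

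I do not expect a genuine obstacle here. The only points worth spelling out are that $f \star_G S$ must be read in the weak sense of Section \ref{sec:vector_integration} in both settings (e.g. $S$ merely bounded in case \ref{item:func-op-compactness_condition_L1K}) and that the two norm bounds quoted above are valid in precisely those settings; and that the $\varepsilon$-truncation in case \ref{item:func-op-compactness_condition_L0S1} uses the splitting $f = f\chi_K + (f - f\chi_K)$, which avoids any appeal to $\sigma$-compactness of $G$. Beyond this bookkeeping the argument is immediate.
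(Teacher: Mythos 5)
Your proposal is correct and follows essentially the same route as the paper: in both cases approximate by operators known to be trace-class via Proposition \ref{prop:function_operator_convolution_trace_class}, control the error with Lemma \ref{lemma:quantization_admissible_weak_definition} (case (i)) resp. the $p=\infty$ Young-type bound of Proposition \ref{prop:interpolation_func-func_op-op_1} (case (ii)), and conclude by norm-closedness of $\mathcal{K}$. Your only deviation is cosmetic: in case (i) you truncate over a compact set coming directly from the vanishing-at-infinity definition rather than over balls $B(0,n)$ as the paper does, which is a slightly cleaner bookkeeping choice in a general locally compact group.
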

	\begin{proof}
	    Both alternatives follow by approximating the less well behaved object and showing that the approximations are compact and converge to $f \star_G S$ in the operator norm. For \ref{item:func-op-compactness_condition_L0S1}, let $f_n = f \chi_{B(0, n)}$ so that $f_n$ has compact support and $\Vert f_n - f \Vert_{L^\infty(G)} \to 0$ as $n \to \infty$. Then $f_n \in L^1_r(G)$ which implies that $f_n \star_G S \in \mathcal{S}^1$ by Proposition \ref{prop:function_operator_convolution_trace_class} and so $f_n \star_G S$ is compact. Lemma \ref{lemma:quantization_admissible_weak_definition} then yields that 
	    \begin{align*}
	        \Vert f \star_G S - f_n \star_G S \Vert_{B(\mathcal{H})} \leq \Vert f-f_n \Vert_{L^\infty(G)} \Vert \D^{-1} S \D^{-1} \Vert_{\mathcal{S}^1} \to 0
	    \end{align*}
	    as $n \to \infty$, which yields the desired conclusion.
	    
	    For \ref{item:func-op-compactness_condition_L1K}, since $S$ is compact, it is the limit in operator norm of a sequence of finite rank operators $(S_n)_n \subset \mathcal{S}^1$. By Proposition \ref{prop:function_operator_convolution_trace_class}, $f \star_G S_n$ is compact. Thus by Proposition \ref{prop:interpolation_func-func_op-op_1} with $p = \infty$,
	    $$
	    \Vert f \star_G S - f \star_G S_n \Vert_{B(\mathcal{H})} \leq \Vert f \Vert_{L^1_r(G)} \Vert S - S_n \Vert_{B(\mathcal{H})} \to 0
	    $$
	    as $n \to \infty$ and so $f \star S$ is also the limit in operator norm of a sequence of compact operators, hence it is compact.
	\end{proof}
	
	\section{Applications}\label{sec:applications}
	
	\subsection{Cohen's class distributions}
	Cohen's class of time-frequency distributions have a clear generalization to locally compact groups via quantum harmonic analysis and their integrability will in this section be shown to be connected to admissibility of an associated operator. In \cite{Luef2019} it was shown that, with the Weyl-Heisenberg group as the underlying group, any Cohen's class distribution can be written as in the following definition using the Weyl transform from Section \ref{sec:wigner_prelim}.
	\begin{definition}
	    A bilinear map $Q : \mathcal{H} \times \mathcal{H} \to L^\infty(G)$ is said to belong to the \emph{Cohen's class} if $Q = Q_S$ for some $S \in B(\mathcal{H})$ where
	    \begin{align}\label{eq:cohen_definition}
	        Q_S(\psi, \phi)(x) = (\psi \otimes \phi) \star_G S(x) = \langle S \sigma(x)\psi, \sigma(x)\phi \rangle.
	    \end{align}
	    We write $Q_S(\psi, \psi) = Q_S(\psi)$.
	\end{definition}
	Cohen's class distributions should be thought of as generalizations of the spectrogram and scalogram as illustrated by the following example.
	\begin{example}\label{ex:cohen_rank_one}
	When $S$ is the rank-one operator $S = \xi \otimes \eta$,
	$$
	Q_S(\psi, \phi)(x) = \langle\xi, \sigma(x) \phi \rangle \overline{\langle \eta, \sigma(x)\psi \rangle}.
	$$
	In particular, when $S = \xi \otimes \xi$,
	$$
	Q_S(\psi)(x) = |\langle \xi, \sigma(x)\psi|^2 = |\langle \psi, \sigma(x)^* \xi\rangle|^2
	$$
	which reduces down to the spectrogram in the Weyl-Heisenberg case and the scalogram in the affine case after replacing $x$ with $x^{-1}$. Note also that by the linearity of the mapping $S \mapsto Q_S$, if $S$ had been a finite rank operator, $Q_S$ would have been a sum of functions of the above form.
	\end{example}
	\begin{example}
	Because of the formalism we have set up, we can easily compute the Cohen's class distribution corresponding to the operator $f \star_G S$ using Proposition \ref{prop:compatibility_relations} as
	$$
	Q_{f \star_G S}(\psi, \phi) = (\psi \otimes \phi) \star_G (f \star_G S) = f *_G ((\psi \otimes \phi) \star_G S) = f *_G Q_S(\psi, \phi).
	$$
	\end{example}

	From here we can deduce some elementary properties of $Q_S$, generalizing the results in \cite[Prop. 6.9]{Berge2022} and \cite[Prop. 7.2, 7.3, 7.5]{Luef2019}.
	\begin{proposition}\label{prop:cohen_properties}
	    Let $S \in B(\mathcal{H})$. Then for $\psi, \phi \in \mathcal{H}$ the following properties hold:
	    \begin{enumerate}[label=(\roman*)]
	        \item \label{prop:cohen_properties:Linfinity_bound} The function $Q_S(\psi, \phi)$ satisfies
	        $$
	        \Vert Q_S(\psi, \phi)\Vert_{L^\infty(G)} \leq \Vert S \Vert_{B(\mathcal{H})} \Vert \psi \Vert \Vert \phi \Vert.
	        $$
	        \item \label{prop:cohen_properties:L1_bound} If $S$ is admissible, then $Q_S(\psi, \phi) \in L^1_r(G)$ and
	        $$
	        \int_G Q_S(\psi, \phi)(x) \dmr(x) = \langle \psi, \phi \rangle \tr(\D^{-1} \mathcal{S} \D^{-1}).
	        $$
	        \item \label{prop:cohen_properties:L1_bound_alternative} If $S$ is trace-class and $\psi, \phi$ are admissible, then $Q_S(\psi, \phi) \in L^1_\ell(G)$ and
	        $$
	        \int_G Q_S(\psi, \phi)(x) \dml(x) = \langle \D^{-1}\psi, \D^{-1}\phi \rangle \tr(\mathcal{S}).
	        $$
	        \item \label{prop:cohen_properties:covariance} We have the covariance property
	        \begin{align}\label{eq:cohen_covariance}
	            Q_S(\sigma(x) \psi, \sigma(x) \phi)(y) = Q_S(\psi, \phi)(yx)
	        \end{align}
	        for all $x, y \in G$.
	        \item \label{prop:cohen_properties:properties_from_S} The function $Q_S(\psi)$ is (real-valued) positive for all $\psi \in \mathcal{H}$ if and only if $S$ is (self-adjoint) positive.
	    \end{enumerate}
	\end{proposition}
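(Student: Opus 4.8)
The strategy is to reduce each item to a result already established for operator convolutions, using the identity $Q_S(\psi,\phi) = (\psi\otimes\phi)\star_G S$ from \eqref{eq:cohen_definition} together with Lemma \ref{lemma:rank_one_convolved_with_operator}. For item \ref{prop:cohen_properties:Linfinity_bound}, apply Lemma \ref{lemma:operator_convolution_infinity_estimate} with $T = \psi\otimes\phi$, noting $\Vert\psi\otimes\phi\Vert_{\mathcal{S}^1} = \Vert\psi\Vert\Vert\phi\Vert$. For item \ref{prop:cohen_properties:L1_bound}, when $S$ is admissible, Corollary \ref{corollary:workhorse_alternative} applied to $T = \psi\otimes\phi \in \mathcal{S}^1$ gives $Q_S(\psi,\phi)\in L^1_r(G)$ with integral equal to $\tr(\psi\otimes\phi)\tr(\D^{-1}S\D^{-1}) = \langle\psi,\phi\rangle\tr(\D^{-1}S\D^{-1})$. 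For item \ref{prop:cohen_properties:L1_bound_alternative}, when $\psi,\phi$ are admissible vectors, then $\psi\otimes\phi$ is an admissible operator by Proposition \ref{prop:rank_one_admissibility_criteria}; if moreover $S\in\mathcal{S}^1$, then both $T=\psi\otimes\phi$ and $S$ are admissible trace-class operators, so Corollary \ref{corollary:admissible_left_right_integrability} gives $Q_S(\psi,\phi)\in L^1_\ell(G)$ with $\int_G Q_S(\psi,\phi)\dml = \tr(S)\tr(\D^{-1}(\psi\otimes\phi)\D^{-1}) = \tr(S)\langle\D^{-1}\psi,\D^{-1}\phi\rangle$ (using $\D^{-1}(\psi\otimes\phi)\D^{-1} = \D^{-1}\psi\otimes\D^{-1}\phi$, whose trace is $\langle\D^{-1}\psi,\D^{-1}\phi\rangle$). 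One minor point to check here: the formula $\tr(\D^{-1}\psi\otimes\D^{-1}\phi)=\langle\D^{-1}\psi,\D^{-1}\phi\rangle$ agrees with Example \ref{example:recover_duflo_moore}, so the conjugation placement is consistent.

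For item \ref{prop:cohen_properties:covariance}, compute directly from the formula $Q_S(\psi,\phi)(y) = \langle S\sigma(y)\psi, \sigma(y)\phi\rangle$: replacing $\psi,\phi$ by $\sigma(x)\psi,\sigma(x)\phi$ gives $\langle S\sigma(y)\sigma(x)\psi, \sigma(y)\sigma(x)\phi\rangle = \langle S\sigma(yx)\psi,\sigma(yx)\phi\rangle = Q_S(\psi,\phi)(yx)$, using that $\sigma$ is a representation. For item \ref{prop:cohen_properties:properties_from_S}, one direction follows from Lemma \ref{lemma:positive_operator_convolution_is_positive}: if $S$ is positive then $(\psi\otimes\psi)\star_G S(x)\geq 0$ since $\psi\otimes\psi$ is a positive trace-class operator. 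For the converse and the real-valued/self-adjoint dichotomy, observe that $Q_S(\psi)(x) = \langle S\sigma(x)\psi,\sigma(x)\psi\rangle$ and, as $\sigma(x)$ is unitary and $\psi$ ranges over $\mathcal{H}$, the vector $\sigma(x)\psi$ ranges over all of $\mathcal{H}$ for fixed $x$; hence $Q_S(\psi)(x)$ being real (resp. nonnegative) for all $\psi$ is equivalent to $\langle S\xi,\xi\rangle$ being real (resp. nonnegative) for all $\xi\in\mathcal{H}$, which characterizes self-adjointness (resp. positivity) of $S$.

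The main obstacle is essentially bookkeeping rather than a deep difficulty: one must be careful that the trace-class and admissibility hypotheses invoked in items \ref{prop:cohen_properties:L1_bound} and \ref{prop:cohen_properties:L1_bound_alternative} are exactly those guaranteeing applicability of Corollaries \ref{corollary:workhorse_alternative} and \ref{corollary:admissible_left_right_integrability}, and that the complex conjugation in the Duflo--Moore relation lands on the correct factor. The slightly subtle point in item \ref{prop:cohen_properties:properties_from_S} is justifying that testing $Q_S(\psi)$ over all $\psi$ at a single point $x$ already sees all of $\langle S\xi,\xi\rangle$; this is immediate from unitarity of $\sigma(x)$ but worth stating explicitly. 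Everything else is a direct substitution into previously proved identities.
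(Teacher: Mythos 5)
Items \ref{prop:cohen_properties:Linfinity_bound}, \ref{prop:cohen_properties:L1_bound}, \ref{prop:cohen_properties:covariance} and \ref{prop:cohen_properties:properties_from_S} of your argument are correct and follow the same route as the paper (Lemma \ref{lemma:operator_convolution_infinity_estimate}, Corollary \ref{corollary:workhorse_alternative}, a direct computation, and the definition \eqref{eq:cohen_definition}, respectively); your explicit unitarity remark for item \ref{prop:cohen_properties:properties_from_S} is a nice touch.

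There is, however, a genuine flaw in your treatment of item \ref{prop:cohen_properties:L1_bound_alternative}. You write that since $S\in\mathcal{S}^1$, ``both $T=\psi\otimes\phi$ and $S$ are admissible trace-class operators,'' and then invoke Corollary \ref{corollary:admissible_left_right_integrability}. But trace-class does \emph{not} imply admissible: for non-unimodular $G$ the Duflo--Moore operator $\D$ is unbounded and admissibility of $S$ (i.e.\ $\D^{-1}S\D^{-1}\in\mathcal{S}^1$) is a strictly stronger hypothesis --- this distinction is the central theme of Section \ref{sec:admissibility}. Item \ref{prop:cohen_properties:L1_bound_alternative} assumes only that $S$ is trace-class and that $\psi,\phi$ are admissible vectors, so the hypotheses of Corollary \ref{corollary:admissible_left_right_integrability} (which requires \emph{both} operators admissible) are not available, and as written your proof only covers the weaker statement where $S$ is additionally admissible. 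The repair is the one the paper points to: do not use the corollary as a black box but repeat its proof. Substitute $x\mapsto x^{-1}$, using $\dml(x^{-1})=\dmr(x)$, $\sigma(x^{-1})=\sigma(x)^*$ and $T\star_G S(x^{-1})=S\star_G T(x)$, so that
\begin{align*}
\int_G (\psi\otimes\phi)\star_G S(x)\dml(x)=\int_G S\star_G(\psi\otimes\phi)(x)\dmr(x),
\end{align*}
and then apply Corollary \ref{corollary:workhorse_alternative} with the roles reversed: the admissible operator is $\psi\otimes\phi$ (admissible by Proposition \ref{prop:rank_one_admissibility_criteria} since $\psi,\phi\in\operatorname{Dom}(\D^{-1})$) and the trace-class operator is $S$. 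This yields $\tr(S)\tr\bigl(\D^{-1}(\psi\otimes\phi)\D^{-1}\bigr)=\tr(S)\langle\D^{-1}\psi,\D^{-1}\phi\rangle$, which is the claimed identity without any admissibility assumption on $S$.
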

	\begin{proof}
	    Item \ref{prop:cohen_properties:Linfinity_bound} is a direct consequence of Lemma \ref{lemma:operator_convolution_infinity_estimate} or alternatively the Cauchy-Schwarz inequality while item \ref{prop:cohen_properties:L1_bound} follows from Corollary \ref{corollary:workhorse_alternative} and item \ref{prop:cohen_properties:L1_bound_alternative} follows by an argument similar to that in the proof of Corollary \ref{corollary:admissible_left_right_integrability}. Item \ref{prop:cohen_properties:covariance} follows by a straight-forward calculation. Item \ref{prop:cohen_properties:properties_from_S} is clear from the definition \eqref{eq:cohen_definition}.
	\end{proof}
	
	\begin{remark}
	    In \cite{Luef2019}, Cohen's class distributions $Q_S$ are said to have the \emph{correct total energy property} if they satisfy
	    $$
	    \int_{\R^{2d}} Q_S(\psi, \phi)(x, \omega) \, dx\, d\omega = \langle \psi, \phi \rangle.
	    $$
	    This corresponds to $S$ being admissible with admissibility constant $\tr(\D^{-1} S \D^{-1}) = 1$ by item \ref{prop:cohen_properties:L1_bound} above.
	\end{remark}
	
	It also turns out that under rather loose conditions, any bilinear map on $\mathcal{H} \times \mathcal{H}$ is a Cohen's class distribution as shown in the following proposition generalizing \cite[Thm. 4.5.1]{grochenig_book} in the Weyl-Heisenberg case and \cite[Prop. 6.11]{Berge2022} in the affine case.
	\begin{proposition}
	    Let $Q : \mathcal{H} \times \mathcal{H} \to L^\infty(G)$ be a bilinear map. Assume that for all $\psi, \phi \in \mathcal{H}$ we know that $Q(\psi, \phi)$ is a continuous function on $G$ that satisfies the covariance property \eqref{eq:cohen_covariance} and the estimate
	    $$
	    |Q(\psi, \phi)(0_G)| \leq C\Vert \psi \Vert \phi \Vert
	    $$
	    for some constant $C$. Then there exists a unique bounded operator $S \in B(\mathcal{H})$ such that $Q = Q_S$.
	\end{proposition}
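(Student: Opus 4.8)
The plan is to recover the operator $S$ from the single number $Q(\psi,\phi)(0_G)$ and then lean on covariance to propagate this to all of $G$. First I would note that, by hypothesis, each $Q(\psi,\phi)$ is a genuine continuous function on $G$ rather than merely an $L^\infty(G)$ equivalence class, so point evaluation at $0_G$ is a legitimate operation and the assignment
\[
B(\psi,\phi) := Q(\psi,\phi)(0_G)
\]
is well-defined. Since $Q$ is bilinear and point evaluation is linear, $B$ inherits from $Q$ the same linearity/conjugate-linearity pattern as the rank-one operator $\psi\otimes\phi$, i.e.\ $B$ is a bounded sesquilinear form on $\mathcal{H}\times\mathcal{H}$, the bound being exactly the assumed estimate $|B(\psi,\phi)|\le C\Vert\psi\Vert\Vert\phi\Vert$.

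Next I would invoke the Riesz representation theorem for bounded sesquilinear forms: there is a unique $S\in B(\mathcal{H})$ with $\Vert S\Vert_{B(\mathcal{H})}\le C$ and $\langle S\psi,\phi\rangle = B(\psi,\phi) = Q(\psi,\phi)(0_G)$ for all $\psi,\phi\in\mathcal{H}$. To check that this $S$ represents $Q$, fix $x\in G$ and $\psi,\phi\in\mathcal{H}$, apply the defining identity of $S$ to the vectors $\sigma(x)\psi$ and $\sigma(x)\phi$, and then use the covariance property \eqref{eq:cohen_covariance} with $y=0_G$:
\[
Q_S(\psi,\phi)(x) = \langle S\sigma(x)\psi,\sigma(x)\phi\rangle = Q(\sigma(x)\psi,\sigma(x)\phi)(0_G) = Q(\psi,\phi)(0_G\cdot x) = Q(\psi,\phi)(x),
\]
the first equality being the definition \eqref{eq:cohen_definition} of $Q_S$. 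Hence $Q = Q_S$. Uniqueness is then immediate: if $Q_{S'} = Q_S$, evaluating at $x=0_G$ (where $\sigma(0_G)=\mathrm{Id}_{\mathcal{H}}$) gives $\langle S'\psi,\phi\rangle = \langle S\psi,\phi\rangle$ for all $\psi,\phi$, so $S'=S$.

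The only genuinely delicate point is the very first one — ensuring that ``evaluate at $0_G$'' makes sense — which is precisely why the continuity hypothesis on the functions $Q(\psi,\phi)$ is needed; everything after that is a formal manipulation with the Riesz lemma and the covariance identity. As a consistency check one may also remark that the operator $S$ produced this way does send $Q_S$ into the stated codomain $L^\infty(G)$, by the $L^\infty$-bound in Proposition \ref{prop:cohen_properties}, so no hidden integrability issue arises.
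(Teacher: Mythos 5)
Your proof is correct and follows essentially the same route as the paper: apply the Riesz representation theorem to the bounded sesquilinear form $(\psi,\phi)\mapsto Q(\psi,\phi)(0_G)$ to obtain $S$, then use the covariance relation \eqref{eq:cohen_covariance} with $y=0_G$ to conclude $Q=Q_S$. You simply spell out the details (well-definedness of point evaluation, the propagation step, and uniqueness) that the paper leaves implicit.
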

	\begin{proof}
	    By the Riesz representation theorem, there exists a bounded operator $S$ such that
	    $$
	    \langle S \psi, \phi \rangle = Q(\psi, \phi)(0_G)
	    $$
	    and that $Q = Q_S$ now follows from the covariance relation \eqref{eq:cohen_covariance}.
	\end{proof}
	In \cite[Thm. 7.6]{Luef2019}, positive Cohen's class distributions with the correct total energy property were characterized as (possibly infinite) convex combinations of spectrograms in the Weyl-Heisenberg case. An analogous result holds in the locally compact setting.
	\begin{theorem}
		If $S \in \mathcal{S}^1$ is positive, then there exists an orthonormal basis $\{ \varphi_n \}_n$ and a sequence $\{ \lambda_n \}_n$ of non-negative numbers with $\sum_n \lambda_n = \tr(S)$ such that
		$$
		Q_S(\psi)(x) = \sum_n \lambda_n |\langle \psi, \sigma(x)^* \varphi_n\rangle|^2
		$$
		where the convergence in the sum is uniform for any $\psi \in \mathcal{H}$.
	\end{theorem}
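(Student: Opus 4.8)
The plan is to obtain the expansion directly from the spectral decomposition of $S$, the linearity of the assignment $S \mapsto Q_S$, and the operator-norm bound for Cohen's class distributions.

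First, since $S \in \mathcal{S}^1$ is positive, the spectral decomposition recalled in Section~\ref{sec:preliminaries} gives $S = \sum_n \lambda_n(S)\,(\psi_n \otimes \psi_n)$ with $\{\psi_n\}_n$ an orthonormal family of eigenvectors and $\lambda_n(S) > 0$. If this family is not already an orthonormal basis of $\mathcal{H}$, I extend it to an orthonormal basis $\{\varphi_n\}_n$ and assign the value $0$ to the new eigenvalues; relabelling, we have $S = \sum_n \lambda_n\,(\varphi_n \otimes \varphi_n)$ with $\lambda_n \ge 0$, the series converging in $\mathcal{S}^1$, and $\sum_n \lambda_n = \tr(S)$ by the definition of the trace.

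Next, by Lemma~\ref{lemma:rank_one_convolved_with_operator} (equivalently Example~\ref{ex:cohen_rank_one}) each rank-one term contributes $Q_{\varphi_n \otimes \varphi_n}(\psi)(x) = |\langle \psi, \sigma(x)^* \varphi_n\rangle|^2$. Writing $S_N = \sum_{n \le N} \lambda_n\,(\varphi_n \otimes \varphi_n)$, linearity of $S \mapsto Q_S$ gives $Q_{S_N}(\psi)(x) = \sum_{n \le N} \lambda_n |\langle \psi, \sigma(x)^* \varphi_n\rangle|^2$. Since $S_N \to S$ in $\mathcal{S}^1$ and hence in $B(\mathcal{H})$, Proposition~\ref{prop:cohen_properties}, item~\ref{prop:cohen_properties:Linfinity_bound}, yields
$$\big\| Q_S(\psi) - Q_{S_N}(\psi) \big\|_{L^\infty(G)} = \big\| Q_{S - S_N}(\psi) \big\|_{L^\infty(G)} \leq \| S - S_N \|_{B(\mathcal{H})} \, \| \psi \|^2 \to 0 \quad (N \to \infty).$$
This is exactly the asserted uniform convergence on $G$ for each fixed $\psi$, and all $\lambda_n \ge 0$, so the proof is complete.

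I do not expect a genuine obstacle here: the only points needing care are the harmless bookkeeping that turns the eigenvector family into a bona fide orthonormal basis (padding with zero eigenvalues so that $\sum_n \lambda_n = \tr(S)$ holds literally) and the interchange of the series for $S$ with the bilinear form defining $Q_S$, which is precisely what the operator-norm estimate provides. One could equally well run the last estimate through Lemma~\ref{lemma:operator_convolution_infinity_estimate} with $\|S - S_N\|_{\mathcal{S}^1}$ in place of $\|S - S_N\|_{B(\mathcal{H})}$; since $\|S - S_N\|_{B(\mathcal{H})} \le \|S - S_N\|_{\mathcal{S}^1} \to 0$ as well, either route works.
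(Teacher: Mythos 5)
Your proposal is correct and follows essentially the same route as the paper: spectral decomposition of the positive trace-class operator $S$, linearity of $S \mapsto Q_S$ applied to the rank-one terms via Lemma \ref{lemma:rank_one_convolved_with_operator}, and the $L^\infty$ bound of Lemma \ref{lemma:operator_convolution_infinity_estimate} (equivalently Proposition \ref{prop:cohen_properties}\ref{prop:cohen_properties:Linfinity_bound}) to justify uniform convergence. Your tail estimate with the partial sums $S_N$ is in fact a slightly more explicit justification of the uniform convergence than the paper's one-line remark, but it is the same argument in substance.
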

	\begin{proof}
		Since $S$ is trace-class and positive, it can be expanded in its singular value decomposition
		$$
		S = \sum_n \lambda_n \varphi_n \otimes \varphi_n
		$$
		where $\sum_n \lambda_n = \tr(S)$ by a theorem due to Lidskii \cite{Simon2005}. This allows us to write
		\begin{align*}
			Q_S(\psi)(x) &= (\psi \otimes \psi) \star_G \sum_n \lambda_n \varphi_n \otimes \varphi_n\\
			&= \sum_n \lambda_n (\psi \otimes \psi) \star_G (\varphi_n \otimes \varphi_n)\\
			&= \sum_n \lambda_n |\langle \psi, \sigma(x)^* \varphi_n \rangle|^2.
		\end{align*}
        That the convergence is uniform follows by Lemma \ref{lemma:operator_convolution_infinity_estimate} applied to $(\psi \otimes \psi) \star_G (\varphi_n \otimes \varphi_n)$.
	\end{proof}
	The following proposition highlights a connection between the operator $f \star_G S$ and the Cohen's class distribution $Q_S$. It is a straight-forward generalization of the Weyl-Heisenberg situation described in \cite[Prop. 8.2]{Luef2019} and the affine case considered in \cite[Prop. 6.12]{Berge2022}.
	\begin{proposition}\label{prop:minmax_loc_op}
	    Let $S$ be a positive, compact operator on $\mathcal{H}$ and let $f \in L^1_r(G)$ be a non-negative function. Then $f \star_G S$ is a positive, compact operator. Denote by $(\lambda_n)_n$ its eigenvalues in non-decreasing order with associated orthogonal eigenvectors $( \phi_n )_n$. Then
	    $$
	    \lambda_n = \max_{\Vert \psi \Vert = 1}\left\{ \int_G f(x)Q_S(\psi)(x) \dmr(x) : \psi \perp \phi_k \text{ for }k = 1, \dots, n-1  \right\}.
	    $$
	\end{proposition}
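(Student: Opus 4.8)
The plan is to recognize the integral on the right-hand side as the quadratic form of the operator $f\star_G S$ and then invoke the classical Courant--Fischer (Rayleigh--Ritz) variational principle. First I would record that $f\star_G S$ is a positive compact self-adjoint operator, so that the eigenvalues $(\lambda_n)_n$ and eigenvectors $(\phi_n)_n$ in the statement exist. Positivity follows exactly as in Lemma \ref{lemma:pos_pos_func_op_conv}: for any $\psi\in\mathcal H$,
$$\langle (f\star_G S)\psi,\psi\rangle=\int_G f(x)\,\langle S\sigma(x)\psi,\sigma(x)\psi\rangle\dmr(x)\ \ge\ 0$$
because $f\ge 0$ and $S\ge 0$, and a positive operator is self-adjoint; compactness is precisely case \ref{item:func-op-compactness_condition_L1K} of Corollary \ref{corollary:looser_conditions_on_func-op_compactness}. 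Hence $f\star_G S$ admits an orthonormal eigenbasis with eigenvalues tending to $0$, and the claimed formula is meaningful.

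The key step is the identity
$$\langle (f\star_G S)\psi,\psi\rangle=\int_G f(x)\,(\psi\otimes\phi)|_{\phi=\psi}\star_G S(x)\dmr(x)=\int_G f(x)\,Q_S(\psi)(x)\dmr(x),$$
valid for every $\psi\in\mathcal H$, which I would obtain from the weak action of $f\star_G S$ together with Lemma \ref{lemma:rank_one_convolved_with_operator} (taking $\phi=\psi$) and the definition \eqref{eq:cohen_definition} of $Q_S$. Since here $S$ is only assumed compact, $f\star_G S$ and its weak action are defined, as in the proof of Corollary \ref{corollary:looser_conditions_on_func-op_compactness}, as the operator-norm limit of $f\star_G S_m$ along finite-rank operators $S_m\to S$; the identity then passes to the limit, the left side converging because $\Vert f\star_G S_m-f\star_G S\Vert_{B(\mathcal H)}\le\Vert f\Vert_{L^1_r(G)}\Vert S_m-S\Vert_{B(\mathcal H)}$ by item \ref{item:youngs_inequality} of Proposition \ref{prop:interpolation_func-func_op-op_1}, and the right side because $Q_{S_m}(\psi)\to Q_S(\psi)$ uniformly (Lemma \ref{lemma:operator_convolution_infinity_estimate}) while $f\in L^1_r(G)$.

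Granting this, the right-hand side of the asserted formula is exactly
$$\max\{\,\langle (f\star_G S)\psi,\psi\rangle:\Vert\psi\Vert=1,\ \psi\perp\phi_1,\dots,\phi_{n-1}\,\},$$
and its equality with $\lambda_n$ is the standard variational characterization of the eigenvalues of a positive compact self-adjoint operator: expanding an admissible $\psi$ over the eigenbasis (completing through the kernel) gives $\langle (f\star_G S)\psi,\psi\rangle=\sum_{k\ge n}\lambda_k|\langle\psi,\phi_k\rangle|^2$, which is a weighted average of the eigenvalues with index $\ge n$ of total weight at most $\Vert\psi\Vert^2=1$, hence does not exceed the largest among them, namely $\lambda_n$, with equality attained at $\psi=\phi_n$. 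This gives the claim, including that the maximum is attained.

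I do not expect a genuine obstacle: the statement is the min--max principle transported into the language of operator convolutions. The only point that needs a little care is the second step in the generality where $S$ is compact but not trace-class, i.e. the very meaning of $f\star_G S$ and of its weak action; this is covered by the approximation argument underlying Corollary \ref{corollary:looser_conditions_on_func-op_compactness} together with the operator-norm Young inequality of Proposition \ref{prop:interpolation_func-func_op-op_1}.
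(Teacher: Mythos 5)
Your proposal is correct and follows essentially the same route as the paper: positivity via Lemma \ref{lemma:pos_pos_func_op_conv}, compactness via Corollary \ref{corollary:looser_conditions_on_func-op_compactness}~\ref{item:func-op-compactness_condition_L1K}, the identity $\langle (f\star_G S)\psi,\psi\rangle=\int_G f(x)Q_S(\psi)(x)\dmr(x)$, and then the Courant minimax principle (which the paper simply cites from Lax, while you prove it directly). Your extra approximation argument justifying the weak action of $f\star_G S$ for merely compact $S$ is a welcome bit of care that the paper's proof leaves implicit, but it does not change the approach.
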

	\begin{proof}
	    Positivity of $f \star_G S$ follows from Lemma \ref{lemma:pos_pos_func_op_conv} while compactness follows from Corollary \ref{corollary:looser_conditions_on_func-op_compactness} \ref{item:func-op-compactness_condition_L1K}. The eigenvalue equality can be seen as a consequence of Courant's minimax theorem \cite[Thm. 28.4]{lax2002} upon noting that
	    $$
	    \langle f\star_G S \psi, \psi \rangle =  \int_G f(x)Q_S(\psi)(x) \dmr(x)
	    $$
	    which can be seen as a consequence of \eqref{eq:cohen_definition}.
	\end{proof}
	Since we have an $L^\infty$ bound on $Q_S(\psi)$ from Proposition \ref{prop:cohen_properties}, we can formulate an uncertainty principle in the same way as \cite[Cor. 7.7]{Luef2019} and \cite[Prop. 3.3.1]{grochenig_book} does for the Weyl-Heisenberg situation which says that if much of the mass of $Q_S$ is concentrated in a subset $\Omega \subset G$, $\Omega$ cannot be too small.
	\begin{corollary}
		Let $S \in B(\mathcal{H})$ and $\Omega$ be a compact subset of $G$ such that
		\begin{align}\label{eq:uncertainty_assumption}
		    \int_\Omega |Q_S(\psi)(x)| \dmr(x) \geq (1-\varepsilon) \Vert S \Vert_{B(\mathcal{H})}
		\end{align}
		for some $\psi \in \mathcal{H}$ with $\Vert \psi \Vert = 1$. Then,
		$$
		\mu_r(\Omega) \geq 1-\varepsilon.
		$$
	\end{corollary}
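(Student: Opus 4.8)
The statement is the standard ``small support forces small measure'' consequence of an $L^\infty$ bound, and the proof is a two-line estimate chaining the hypothesis against item \ref{prop:cohen_properties:Linfinity_bound} of Proposition \ref{prop:cohen_properties}. The only thing to keep track of is the normalization $\|\psi\| = 1$ and the harmless assumption $S \neq 0$ (if $S = 0$ then $Q_S \equiv 0$ and there is nothing to prove, or rather the hypothesis is degenerate).

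First I would invoke Proposition \ref{prop:cohen_properties}\ref{prop:cohen_properties:Linfinity_bound} with $\phi = \psi$, which gives the pointwise bound
\[
\Vert Q_S(\psi) \Vert_{L^\infty(G)} \leq \Vert S \Vert_{B(\mathcal{H})} \Vert \psi \Vert^2 = \Vert S \Vert_{B(\mathcal{H})}
\]
since $\Vert \psi \Vert = 1$. Next I would estimate the left-hand side of the hypothesis \eqref{eq:uncertainty_assumption} from above by pulling out the supremum over $\Omega$:
\[
\int_\Omega |Q_S(\psi)(x)| \dmr(x) \leq \Vert Q_S(\psi) \Vert_{L^\infty(G)} \, \mu_r(\Omega) \leq \Vert S \Vert_{B(\mathcal{H})} \, \mu_r(\Omega).
\]
Combining this with the assumed lower bound $(1-\varepsilon)\Vert S \Vert_{B(\mathcal{H})} \leq \int_\Omega |Q_S(\psi)(x)| \dmr(x)$ yields $(1-\varepsilon)\Vert S \Vert_{B(\mathcal{H})} \leq \Vert S \Vert_{B(\mathcal{H})} \, \mu_r(\Omega)$, and dividing through by $\Vert S \Vert_{B(\mathcal{H})} > 0$ gives $\mu_r(\Omega) \geq 1-\varepsilon$.

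There is no real obstacle here; the content is entirely in the already-established $L^\infty$ bound on Cohen's class distributions, and the remainder is the elementary inequality $\int_\Omega |g| \leq \|g\|_\infty \mu_r(\Omega)$. The one point worth stating explicitly in the write-up is that compactness of $\Omega$ guarantees $\mu_r(\Omega) < \infty$ so the manipulations are legitimate, and that the argument shows, more generally, that the fraction of the $L^\infty$-mass of $Q_S(\psi)$ captured by any measurable set is at most its right Haar measure.
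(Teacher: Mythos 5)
Your proof is correct and is essentially identical to the paper's argument: both chain the hypothesis against the $L^\infty$ bound from Proposition \ref{prop:cohen_properties} \ref{prop:cohen_properties:Linfinity_bound} via $\int_\Omega |Q_S(\psi)|\dmr \leq \Vert Q_S(\psi)\Vert_{L^\infty(G)}\mu_r(\Omega) \leq \Vert S\Vert_{B(\mathcal{H})}\mu_r(\Omega)$ and divide by $\Vert S \Vert_{B(\mathcal{H})}$. Your explicit remark about the degenerate case $S = 0$ is a harmless addition not made in the paper.
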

	\begin{proof}
	    Using \eqref{eq:uncertainty_assumption}, an elementary integral estimate and property \ref{prop:cohen_properties:Linfinity_bound} of Proposition \ref{prop:cohen_properties}, we obtain
	    \begin{align*}
		    (1-\varepsilon) \Vert S \Vert_{B(\mathcal{H})} &\leq \int_\Omega |Q_S(\psi)(x)| \dmr(x)\\
		    &\leq \Vert Q_S(\psi)\Vert_{L^\infty(G)} \mu_r(\Omega)\\
		    &\leq \Vert S \Vert_{B(\mathcal{H})} \mu_r(\Omega)
		\end{align*}
		which yields the desired inequality upon dividing out $\Vert S \Vert_{B(\mathcal{H})}$.
	\end{proof}
	\begin{example}
	Consider the case where $\mathcal{H} = L^2(\R^2)$ and $G = \mathbb{S}$, the shearlet group. If we let $\mathcal{SH}_\varphi \psi$ denote the shearlet transform of $\psi \in L^2(\R^2)$ with respect to the normalized window $\varphi$, the above corollary applied to a subset $\Omega \subset \mathbb{S}$ reads as
	$$
	\int_\Omega |\mathcal{SH}_\varphi\psi(a,s,x)|^2\, \frac{da\,ds\,dx}{a^3} \geq 1-\varepsilon \implies \int_\Omega \frac{da\,ds\,dx}{a} \geq 1-\varepsilon
	$$
	where we used the result of Example \ref{ex:cohen_rank_one} and the left and right Haar measure relations from Section \ref{sec:haar_measure}.
	\end{example}
	
	\subsection{Mixed-state localization operators}\label{sec:mix_state_loc}
	The localization operators introduced in Section \ref{sec:loc_op} have an equivalent formulation using quantum harmonic analysis for which the generalization to locally compact groups is clear.
	\begin{definition}
	    Let $f \in L^1_r(G)$ and $\varphi_1, \varphi_2 \in \mathcal{H}$. We then define the \emph{localization operator} $A_f^{\varphi_1, \varphi_2}$ on $\mathcal{H}$ as the operator
	    $$
	    A_f^{\varphi_1, \varphi_2} = f \star_G (\varphi_1 \otimes \varphi_2).
	    $$
	    By Proposition \ref{prop:function_operator_convolution_trace_class}, all localization operators are trace-class operators.
	\end{definition}
	\begin{remark}
	    The rank-one case of the interpolation results in Section \ref{sec:convol_mapping_properties} can be found stated for localization operators in \cite[Chap. 13]{Wong2002} which notably investigates localization operators on locally compact groups.
	\end{remark}
	Borrowing some terminology from quantum mechanics, the operator $\varphi_1 \otimes \varphi_2 = \varphi \otimes \varphi$ describes a \emph{pure state} of a system while a positive trace-class operator which does not have rank one describes a \emph{mixed state} since it is the limit of a linear combination of pure states. This leads us to the following two definitions, discussed in more depth in \cite{Luef2019, Luef2019_acc} for the Weyl-Heisenberg case.
	\begin{definition}
	A positive trace-class operator that is admissible with $\tr(\D^{-1} S \D^{-1}) = 1$ is said to be a \emph{density operator}.
	\end{definition}
	\begin{definition}
	Let $S$ be a density operator and $f \in L^1_r(G)$. Then the \emph{mixed-state localization operator} corresponding to $S$ and $f$ is defined as $f \star_G S$.
	\end{definition}
	
	\begin{remark}
	    It is possible to view localization operators as induced by a broader class of functions and operators and investigate the properties of those localization operators as in \cite{Cordero2003}. For the purposes of this paper we restrict our attention to localization operators as specified by the definitions above.
	\end{remark}
	\begin{remark}
	    Mixed-state localization operators show up as the natural analogue of localization operators for the operator wavelet transform defined in Section \ref{sec:operator_wavelet_transforms} below where the density operator condition is equivalent to the operator wavelet transform being an isometry.
	\end{remark}
	We are especially interested in the case where $f = \chi_\Omega$ where $\Omega$ is a compact subset of $G$ and $S$ is a density operator in which case we refer to $\chi_\Omega \star_G S$ as the mixed-state localization operator corresponding to $S$ and $\Omega$. By the positivity and compactness of $\chi_\Omega \star_G S$, we can write the singular value decomposition as
	$$
	\chi_\Omega \star_G S = \sum_k \lambda_k^\Omega h_k^\Omega \otimes h_k^\Omega
	$$
	where the eigenvalues $(\lambda_k^\Omega)_k$ are ordered decreasingly.
	
	In \cite{Luef2019_acc}, we have the following result on the distributions of the eigenvalues of mixed-state localization operators in the Weyl-Heisenberg case, essentially saying that the number of eigenvalues of $\chi_\Omega \star_G S$ close to $1$ scales as the size of the compact subset $\Omega$ provided $S$ is a density operator. This was originally proved in the rank-one case in \cite{Feichtinger2001}.
	\begin{theorem}[{\cite[Thm. 4.4]{Luef2019_acc}}]\label{theorem:approx_identity_mixed_loc_luef}
	    Let $S$ be a density operator on $L^2(\R^d)$, let $\Omega \subset \R^{2d}$ be a compact domain and fix $\delta \in (0,1)$. If $\big\{ \lambda_k^{R \Omega} \}_k$ are the non-zero eigenvalues of $\chi_{R\Omega} \star_{\mathbb{H}^n} S$, then
	    $$
	    \frac{\#\big\{ k : \lambda_k^{R\Omega} > 1 - \delta \big\}}{|R\Omega|} \to 1\quad \text{as }R \to \infty.
	    $$
	\end{theorem}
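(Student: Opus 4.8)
The plan is to pin down the two leading moments of the positive trace-class operator $A_R := \chi_{R\Omega}\star_{\mathbb{H}^n} S$ and then read off the eigenvalue count by an elementary sandwich. First I would record the structural properties of $A_R$. Positivity is Lemma~\ref{lemma:pos_pos_func_op_conv}, compactness is Corollary~\ref{corollary:looser_conditions_on_func-op_compactness}~\ref{item:func-op-compactness_condition_L1K}, and for $\|\psi\|=1$ one has $\langle A_R\psi,\psi\rangle=\int_{R\Omega}Q_S(\psi)(x)\dmr(x)\le\int_{\mathbb{H}^n}Q_S(\psi)(x)\dmr(x)=1$ by item~\ref{prop:cohen_properties:L1_bound} of Proposition~\ref{prop:cohen_properties} and the density-operator normalization; hence $0\le A_R\le I$ and all eigenvalues $\lambda_k^{R\Omega}$ lie in $[0,1]$. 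Recalling that on $\mathbb{H}^n$ the Duflo-Moore operator is the identity, so that a density operator is simply a positive trace-class operator with $\tr(S)=1$, Proposition~\ref{prop:trace_of_func_op_conv} gives $\sum_k\lambda_k^{R\Omega}=\tr(A_R)=\tr(S)\,|R\Omega|=|R\Omega|$.

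The analytic core is the asymptotics $\tr(A_R^2)=|R\Omega|\,(1+o(1))$. Writing $\tr(A_R^2)=(A_R\star_{\mathbb{H}^n}A_R)(0_{\mathbb{H}^n})$, expanding the inner $A_R=\chi_{R\Omega}\star_{\mathbb{H}^n}S$ via the first compatibility relation of Proposition~\ref{prop:compatibility_relations}, interchanging trace and integral, and using the right-invariance of $\mu_r$, I would arrive at
\[
\tr(A_R^2)=\iint_{\mathbb{H}^n\times\mathbb{H}^n}\chi_{R\Omega}(y)\,\chi_{R\Omega}(z)\,(S\star_{\mathbb{H}^n}S)(yz^{-1})\dmr(y)\dmr(z)=\int_{\mathbb{H}^n}(S\star_{\mathbb{H}^n}S)(w)\,\varphi_R(w)\dmr(w),
\]
with $\varphi_R(w):=\mu_r\!\bigl(R\Omega\cap w^{-1}\!\cdot R\Omega\bigr)$. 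Here $S\star_{\mathbb{H}^n}S\ge 0$ because $(S\star_{\mathbb{H}^n}S)(w)=\|S^{1/2}\sigma(w)S^{1/2}\|_{\mathcal{S}^2}^2$, and $\int_{\mathbb{H}^n}(S\star_{\mathbb{H}^n}S)\dmr=\tr(S)\tr(\D^{-1}S\D^{-1})=1$ by Corollary~\ref{corollary:workhorse_alternative}, while $0\le\varphi_R(w)\le|R\Omega|=\varphi_R(0_{\mathbb{H}^n})$. Scaling out the dilation shows $|R\Omega|-\varphi_R(w)=R^{2d}\,|\Omega\setminus(\Omega-w/R)|\le R^{2d}\,\|\chi_\Omega-\chi_{\Omega-w/R}\|_{L^1}$, so $\varphi_R(w)/|R\Omega|\to 1$ for each fixed $w$ by continuity of translation on $L^1(\R^{2d})$ (only $\chi_\Omega\in L^1$, i.e.\ boundedness of $\Omega$, is used). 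Dominated convergence with the integrable dominant $S\star_{\mathbb{H}^n}S$ then gives $\tr(A_R^2)/|R\Omega|\to 1$.

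With both moments available, put $\eta_R:=\sum_k\lambda_k^{R\Omega}\bigl(1-\lambda_k^{R\Omega}\bigr)=\tr(A_R)-\tr(A_R^2)=o(|R\Omega|)$ and $N_R:=\#\{k:\lambda_k^{R\Omega}>1-\delta\}$. Since $\lambda\le\delta^{-1}\lambda(1-\lambda)$ when $0\le\lambda\le 1-\delta$ and $1-\lambda\le(1-\delta)^{-1}\lambda(1-\lambda)$ when $\lambda>1-\delta$, splitting $\sum_k\lambda_k^{R\Omega}=|R\Omega|$ according to the size of $\lambda_k^{R\Omega}$ yields
\[
N_R\ge\sum_{\lambda_k^{R\Omega}>1-\delta}\lambda_k^{R\Omega}=|R\Omega|-\sum_{\lambda_k^{R\Omega}\le 1-\delta}\lambda_k^{R\Omega}\ge|R\Omega|-\frac{\eta_R}{\delta},
\]
\[
N_R=\sum_{\lambda_k^{R\Omega}>1-\delta}\Bigl(\lambda_k^{R\Omega}+(1-\lambda_k^{R\Omega})\Bigr)\le|R\Omega|+\frac{\eta_R}{1-\delta}.
\]
Dividing by $|R\Omega|$ and letting $R\to\infty$, so that $\eta_R/|R\Omega|\to 0$, gives $N_R/|R\Omega|\to 1$.

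The step I expect to be the main obstacle is the second paragraph: rewriting $\tr(A_R^2)$ in the clean form $\int(S\star_{\mathbb{H}^n}S)(w)\,\varphi_R(w)\dmr(w)$ (carefully justifying the trace--integral interchanges through $S\star_{\mathbb{H}^n}S\in L^1_r$ and $\chi_{R\Omega}\in L^1_r$, and keeping the $x\mapsto x^{-1}$ conventions straight), and then verifying $\varphi_R(w)/|R\Omega|\to 1$, which is where the hypothesis that $\Omega$ is a bounded domain actually enters. The remaining ingredients---the bound $0\le A_R\le I$, the trace identity, and the final counting estimate---are routine. One could instead run a full Szeg\H{o}-type argument, proving $\tr(A_R^m)=|R\Omega|(1+o(1))$ for every $m\ge1$ and sandwiching $\chi_{(1-\delta,1]}$ between continuous functions that vanish at $0$ and equal $1$ at $1$, but the $m=2$ computation above already suffices.
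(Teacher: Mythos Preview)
Your proposal is correct and follows essentially the same approach as the paper. Note that the paper does not prove this theorem directly---it is cited from \cite{Luef2019_acc}---but the paper's proof of the affine analog (Theorem~\ref{theorem:density_operator_count_eigenvalues_affine}) proceeds via exactly the same ingredients: the trace identities of Lemma~\ref{lemma:mixed_state_loc_properties}, the bound $|G(t)|\le\max\{1/\delta,1/(1-\delta)\}\,t(1-t)$ of Lemma~\ref{lemma:max_G_estimate} (which is your two-sided sandwich written as a single inequality), and an approximate-identity argument (Lemma~\ref{lemma:approx_id_affine}) showing $\tr(A_R)-\tr(A_R^2)=o(\mu_r(R\Omega))$. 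Your change of variables isolating $\varphi_R(w)=\mu_r(R\Omega\cap w^{-1}R\Omega)$ and the appeal to continuity of translation on $L^1(\R^{2d})$ is a slightly cleaner packaging of the same limit, exploiting that on $\R^{2d}$ the dilation is Euclidean; otherwise the arguments coincide.
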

	The proof of this theorem uses of approximate identities which show up as a consequence of the scaling $R\Omega$. In the locally compact setting we cannot consider dilations and therefore settle for proving the corresponding statement for the affine group.
	\begin{theorem}\label{theorem:density_operator_count_eigenvalues_affine}
	    Let $S$ be a density operator on $L^2(\R^+)$, let $\Omega \subset \aff$ be a compact domain and fix $\delta \in (0,1)$. If $\big\{ \lambda_k^{R \Omega} \big\}_k$ are the non-zero eigenvalues of $\chi_{R\Omega} \star_\aff S$, then
	    \begin{align}\label{eq:affine_eigenvalues_quotient}
            \frac{\# \big\{ k : \lambda_k^{R\Omega} > 1 - \delta \big\}}{\tr(S) \mu_r(R\Omega)} \to 1\quad \text{as }R \to \infty.
	    \end{align}
	\end{theorem}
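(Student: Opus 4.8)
The plan is to show that $P_R:=\chi_{R\Omega}\star_\aff S$ is asymptotically an orthogonal projection of rank $\sim\tr(S)\mu_r(R\Omega)$, by comparing $\tr(P_R)$ with $\tr(P_R^2)$; this is the scheme used for the Weyl--Heisenberg case in \cite{Luef2019_acc, Feichtinger2001}, and the remark preceding the theorem flags that the dilation $\Gamma_R$ is what makes it go through in the affine setting. First I would record the structural facts: $P_R$ is positive (Lemma \ref{lemma:pos_pos_func_op_conv}), trace-class (Proposition \ref{prop:function_operator_convolution_trace_class}), and compact (Corollary \ref{corollary:looser_conditions_on_func-op_compactness}\ref{item:func-op-compactness_condition_L1K}), so it has a spectral decomposition $P_R=\sum_k\lambda_k^{R\Omega}\,h_k^{R\Omega}\otimes h_k^{R\Omega}$ with $\lambda_k^{R\Omega}\searrow 0$. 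The decisive bound $\lambda_k^{R\Omega}\le 1$ comes from $P_R\le I$: indeed $\langle P_R\psi,\psi\rangle=\int_{R\Omega}Q_S(\psi)(x)\dmr(x)$, $Q_S(\psi)\ge 0$ because $S\ge 0$, and $\int_\aff Q_S(\psi)(x)\dmr(x)=\|\psi\|^2\tr(\D^{-1}S\D^{-1})=\|\psi\|^2$ by Proposition \ref{prop:cohen_properties}\ref{prop:cohen_properties:L1_bound} together with the density-operator normalization. Proposition \ref{prop:trace_of_func_op_conv} then gives exactly $\tr(P_R)=\sum_k\lambda_k^{R\Omega}=\tr(S)\mu_r(R\Omega)$.

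Next I would compute the Hilbert--Schmidt norm. Writing $P_R$ as a weak operator integral, expanding $P_R^2$, and using cyclicity of the trace together with the identity $\tr(\sigma(x)^*S\sigma(x)\,\sigma(y)^*S\sigma(y))=(S\star_\aff S)(xy^{-1})$ and the right-invariance of $\mu_r$, one arrives at
\begin{align*}
\tr(P_R^2)=\sum_k\bigl(\lambda_k^{R\Omega}\bigr)^2=\int_\aff (S\star_\aff S)(z)\,\mu_r\bigl(R\Omega\cap z^{-1}(R\Omega)\bigr)\dmr(z).
\end{align*}
Here $S\star_\aff S\ge 0$ by Lemma \ref{lemma:positive_operator_convolution_is_positive}, and $S\star_\aff S\in L^1_r(\aff)$ with $\int_\aff (S\star_\aff S)\dmr=\tr(S)\tr(\D^{-1}S\D^{-1})=\tr(S)$ by Corollary \ref{corollary:admissible_left_right_integrability}. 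Since the integrand is bounded by $\mu_r(R\Omega)\,(S\star_\aff S)(z)$, the whole matter reduces, via dominated convergence, to the geometric estimate
\begin{align}\label{eq:folner_goal}
\frac{\mu_r\bigl(R\Omega\cap z^{-1}(R\Omega)\bigr)}{\mu_r(R\Omega)}\longrightarrow 1\qquad\text{for each fixed }z\in\aff,
\end{align}
the affine analogue of the familiar fact that dilates of a fixed domain are asymptotically translation invariant. This is precisely the step where the shape of $\Gamma_R(x,a)=(Rx,a^R)$ --- chosen so that $\mu_r(R\Omega)=R^2\mu_r(\Omega)$ --- must be exploited, and it is the one genuinely affine-specific ingredient, hence the main obstacle (it is where the ``approximate identities which show up as a consequence of the scaling $R\Omega$'' enter). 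Granting \eqref{eq:folner_goal} one obtains $\mu_r(R\Omega)^{-1}\tr(P_R^2)\to\tr(S)$, whence
\begin{align*}
\sum_k\lambda_k^{R\Omega}\bigl(1-\lambda_k^{R\Omega}\bigr)=\tr(P_R)-\tr(P_R^2)=o\bigl(\tr(S)\mu_r(R\Omega)\bigr).
\end{align*}

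Finally \eqref{eq:affine_eigenvalues_quotient} follows by an elementary Chebyshev argument from the three facts $\lambda_k^{R\Omega}\in[0,1]$, $\sum_k\lambda_k^{R\Omega}=M_R:=\tr(S)\mu_r(R\Omega)$ and $\sum_k\lambda_k^{R\Omega}(1-\lambda_k^{R\Omega})=o(M_R)$. Put $N_R=\#\{k:\lambda_k^{R\Omega}>1-\delta\}$. From $M_R=\sum_k\lambda_k^{R\Omega}\le N_R+\delta^{-1}\sum_k\lambda_k^{R\Omega}(1-\lambda_k^{R\Omega})$ one gets $N_R\ge M_R(1-o(1))$. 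For the reverse bound fix $\varepsilon\in(0,\delta)$ and split the index set at level $1-\varepsilon$: the eigenvalues in $(1-\delta,1-\varepsilon]$ number at most $\bigl((1-\delta)\varepsilon\bigr)^{-1}\sum_k\lambda_k^{R\Omega}(1-\lambda_k^{R\Omega})=o(M_R)$, while those exceeding $1-\varepsilon$ number at most $(1-\varepsilon)^{-1}\sum_k\lambda_k^{R\Omega}=(1-\varepsilon)^{-1}M_R$, so $\limsup_R N_R/M_R\le(1-\varepsilon)^{-1}$; letting $\varepsilon\downarrow 0$ gives $N_R/M_R\to 1$, which is exactly \eqref{eq:affine_eigenvalues_quotient}. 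The only place requiring real work is \eqref{eq:folner_goal}: unlike the Euclidean/Heisenberg case, $\aff$ is not reached by Euclidean dilations, so this Følner-type property of the sequence $(R\Omega)_R$ has to be proven directly from the definition of $\Gamma_R$ and the non-unimodular structure of $\aff$, and I expect that to be the crux of the proof.
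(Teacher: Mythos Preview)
Your overall strategy coincides with the paper's: both reduce to showing $\tr(P_R)-\tr(P_R^2)=o\bigl(\mu_r(R\Omega)\bigr)$ and then extract \eqref{eq:affine_eigenvalues_quotient} by a counting argument (the paper packages the latter as Lemma \ref{lemma:max_G_estimate}, which is your final paragraph in a slightly different dress). The structural facts about $P_R$, the trace identity, and the double-integral expression for $\tr(P_R^2)$ are all correct and match Lemmas \ref{lemma:mixed_state_loc_eigenvalue bounds} and \ref{lemma:mixed_state_loc_properties}.

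The genuine gap is exactly at your \eqref{eq:folner_goal}: that statement is \emph{false} on the affine group. Because $\aff$ is non-unimodular, left translation rescales the right Haar measure; concretely $\mu_r(z^{-1}E)=a(z)^{-1}\mu_r(E)$ for $z=(x_0,a_0)$. Hence for any $z$ with $a(z)>1$,
\[
\frac{\mu_r\bigl(R\Omega\cap z^{-1}(R\Omega)\bigr)}{\mu_r(R\Omega)}\le\frac{\mu_r\bigl(z^{-1}(R\Omega)\bigr)}{\mu_r(R\Omega)}=\frac{1}{a(z)}<1\qquad\text{for every }R,
\]
so the limit cannot be $1$. Your dominated-convergence step in the $z$-variable therefore cannot close; the obstruction is intrinsic to the non-unimodular setting (it disappears for the Weyl--Heisenberg group, which is why the F{\o}lner heuristic works there).

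The paper avoids this by \emph{not} passing to a single integral in $z$. It keeps the double integral $\int_{R\Omega}\int_{R\Omega}\tilde S(xy^{-1})\,d\mu_r(x)\,d\mu_r(y)$, pulls the \emph{outer} variable back to $\Omega$ via $y=\Gamma_R(w,u)$, and then proves (Lemma \ref{lemma:approx_id_affine}) that for each $(w,u)$ in the interior of $\Omega$ the inner integral tends to $1$. The crucial point is that the shift $\Gamma_R(w,u)$ moves with $R$; one is never testing invariance of $R\Omega$ under a \emph{fixed} left translation, so the non-unimodularity obstruction above does not bite. Your single-integral reformulation throws away precisely this $R$-dependence of the shift, which is why it leads to an unprovable claim.
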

	Note that the scaled set $R\Omega$ is defined as in Section \ref{sec:affine_group} using the scaling function
	$$
	\Gamma_R: \aff \to \aff,\qquad \Gamma_R(x,a) = (Rx, a^R), \qquad \Gamma_R^{-1}(x,a) = \Big(\frac{x}{R}, a^{1/R}\Big)
	$$
	where $R\Omega = \{ \Gamma_R(x,a), (x,a \in \Omega) \}$.
	
	Before starting the proof, we will need to establish some auxiliary lemmas, some of which we state in the locally compact setting for the sake of generality.
	\begin{lemma}\label{lemma:mixed_state_loc_eigenvalue bounds}
	    Let $S$ be a density operator and $\Omega \subset G$ a compact domain. Then the eigenvalues $\big\{ \lambda_k^\Omega \big\}_k$ of $\chi_\Omega \star_G S$ satisfy $0 \leq \lambda_k^\Omega \leq 1$.
	\end{lemma}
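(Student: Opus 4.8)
The plan is to establish the two bounds separately, in each case reducing to a result already proved for the convolutions.

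For the lower bound, since $\chi_\Omega$ is non-negative and $S$ is positive, Lemma \ref{lemma:pos_pos_func_op_conv} shows that $\chi_\Omega \star_G S$ is a positive operator; as $\Omega$ is compact we have $\chi_\Omega \in L^1_r(G)$, so Proposition \ref{prop:function_operator_convolution_trace_class} gives $\chi_\Omega \star_G S \in \mathcal{S}^1$, hence it is compact and its eigenvalues $\big\{\lambda_k^\Omega\big\}_k$ are well defined and non-negative, i.e.\ $\lambda_k^\Omega \geq 0$.

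For the upper bound I would use the variational description of the top eigenvalue of a positive compact operator, $\max_k \lambda_k^\Omega = \sup_{\Vert \psi \Vert = 1} \langle (\chi_\Omega \star_G S)\psi, \psi\rangle$. Fix $\psi \in \mathcal{H}$ with $\Vert \psi \Vert = 1$. Unwinding the weak action of the function--operator convolution and applying Lemma \ref{lemma:rank_one_convolved_with_operator},
\begin{align*}
    \langle (\chi_\Omega \star_G S)\psi, \psi\rangle = \int_\Omega \langle S\sigma(x)\psi, \sigma(x)\psi\rangle \dmr(x) = \int_\Omega (\psi \otimes \psi) \star_G S(x) \dmr(x).
\end{align*}
Because $S$ is positive, Lemma \ref{lemma:positive_operator_convolution_is_positive} shows that $(\psi \otimes \psi) \star_G S(x) \geq 0$ for all $x \in G$, so enlarging the domain of integration from $\Omega$ to $G$ can only increase the integral:
\begin{align*}
    \langle (\chi_\Omega \star_G S)\psi, \psi\rangle \leq \int_G (\psi \otimes \psi) \star_G S(x) \dmr(x).
\end{align*}
Since $\psi \otimes \psi \in \mathcal{S}^1$ and $S$ is admissible, Corollary \ref{corollary:workhorse_alternative} evaluates the right-hand side as $\tr(\psi \otimes \psi)\, \tr(\D^{-1} S \D^{-1}) = \Vert \psi \Vert^2 = 1$, where we used that $S$ is a density operator, so $\tr(\D^{-1} S \D^{-1}) = 1$. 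Taking the supremum over unit vectors $\psi$ yields $\lambda_k^\Omega \leq \max_k \lambda_k^\Omega \leq 1$ for every $k$.

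I do not expect a genuine obstacle: the argument is a short reduction to the convolution identities of Sections \ref{sec:convolutions} and \ref{sec:admissibility}. The only mildly non-formal point is the variational characterization of the largest eigenvalue of the positive compact operator $\chi_\Omega \star_G S$, which is standard, and the crux of the estimate is simply the monotonicity of the integral of the non-negative function $(\psi \otimes \psi) \star_G S$ under restriction from $G$ to $\Omega$.
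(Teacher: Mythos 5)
Your proof is correct and follows essentially the same route as the paper: non-negativity from Lemma \ref{lemma:pos_pos_func_op_conv}, and the upper bound by comparing the quadratic form $\int_\Omega \langle S\sigma(x)\psi,\sigma(x)\psi\rangle\dmr(x)$ with the full integral over $G$, which equals $1$ by admissibility and the normalization $\tr(\D^{-1}S\D^{-1})=1$ (the paper cites Proposition \ref{prop:cohen_properties} \ref{prop:cohen_properties:L1_bound}, which is the same fact you obtain from Corollary \ref{corollary:workhorse_alternative}). The only cosmetic difference is that the paper evaluates the quadratic form directly at the eigenvectors $h_k^\Omega$, while you pass through the variational characterization of the top eigenvalue; both are fine.
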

	\begin{proof}
	All of the eigenvalues are non-negative and real-valued since $\chi_\Omega \star_G S$ is a positive operator by Lemma \ref{lemma:pos_pos_func_op_conv}. For the upper limit, we have
	\begin{align*}
	    \lambda_k^\Omega &= \big\langle (\chi_\Omega \star_G S) h_k^\Omega, h_k^\Omega \big\rangle = \int_G \chi_\Omega(x) \big\langle \sigma(x)^* S \sigma(x) h_k^\Omega, h_k^\Omega \big\rangle \dmr(x)\\
	    &\leq \int_G Q_S(h_k^\Omega)(x)\dmr(x) = \big\langle h_k^\Omega, h_k^\Omega \big\rangle = 1 
	\end{align*}
	where we used Proposition \ref{prop:cohen_properties} \ref{prop:cohen_properties:L1_bound}.
	\end{proof}
	
	\begin{lemma}\label{lemma:density_S_nonneg}
		Let $S$ be a density operator, then the function $\tilde{S} = S \star_G S$ is non-negative, nonzero at $0_G$ and has total integral $1$ with respect to both the left and right Haar measure.
	\end{lemma}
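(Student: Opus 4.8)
The plan is to verify the three assertions in turn, each of which reduces to a result already established. \textbf{Non-negativity.} Since $S$ is a density operator it is in particular a positive trace-class operator, hence a positive bounded operator, so applying Lemma~\ref{lemma:positive_operator_convolution_is_positive} with both arguments equal to $S$ gives $\tilde S(x) = S \star_G S(x) \geq 0$ for every $x \in G$.

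\textbf{Nonzero at $0_G$.} Because $\sigma$ is a representation we have $\sigma(0_G) = \mathrm{Id}_{\mathcal{H}}$, and therefore
\[
\tilde S(0_G) = \tr\big(S\sigma(0_G)^* S\sigma(0_G)\big) = \tr(S^2) = \Vert S \Vert_{\mathcal{S}^2}^2 = \sum_n \lambda_n(S)^2 .
\]
A density operator cannot be the zero operator, since the normalization $\tr(\D^{-1}S\D^{-1}) = 1$ forbids it; hence at least one eigenvalue $\lambda_n(S)$ is strictly positive, so $\tilde S(0_G) > 0$.

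\textbf{Total integral.} A density operator is, by definition, an admissible trace-class operator, so Corollary~\ref{corollary:admissible_left_right_integrability} applies with $T = S$ and yields $\tilde S \in L^1_r(G) \cap L^1_\ell(G)$ together with
\[
\int_G \tilde S(x)\dmr(x) = \int_G \tilde S(x)\dml(x) = \tr(S)\,\tr(\D^{-1}S\D^{-1}) = 1,
\]
the last equality being the normalization that defines a density operator. The modular-function bookkeeping that makes the two Haar integrals coincide is already contained in the proof of Corollary~\ref{corollary:admissible_left_right_integrability} (it comes from the substitution $x \mapsto x^{-1}$ together with $\tilde S(x^{-1}) = S \star_G S(x)$ and $\dmr(x^{-1}) = \dml(x)$), so nothing new is needed here.

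Since every step is an application of an earlier statement, I do not expect a genuine obstacle. The only points that require a moment of care are checking that the hypotheses of Corollary~\ref{corollary:admissible_left_right_integrability} hold — which is precisely the assertion that $S$ is a density operator — and recording that $S \neq 0$, without which the value at $0_G$ could vanish.
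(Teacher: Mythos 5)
Your proof follows essentially the same route as the paper's: non-negativity from Lemma~\ref{lemma:positive_operator_convolution_is_positive}, the value at $0_G$ via $\sigma(0_G)$ being the identity so that $\tilde S(0_G) = \tr(S^2) = \sum_n \lambda_n(S)^2 > 0$ for $S \neq 0$, and integrability together with the integral formulas from Corollary~\ref{corollary:admissible_left_right_integrability} applied with $T = S$. The one point to flag is your final equality $\tr(S)\tr(\D^{-1}S\D^{-1}) = 1$: the density-operator normalization gives only $\tr(\D^{-1}S\D^{-1}) = 1$, so both Haar integrals of $\tilde S$ evaluate to $\tr(S)$, which need not equal $1$. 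This is not a defect of your argument relative to the paper --- the paper's own proof makes the same jump, and its later use of the lemma (in the proof of Theorem~\ref{theorem:density_operator_count_eigenvalues_affine}, where one passes to $\phi = \tilde S/\tr(S)$ precisely to obtain total integral $1$) indicates that the correct value is $\tr(S)$ unless one additionally assumes $\tr(S) = 1$.
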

	\begin{proof}
	    Non-negativity of $\tilde{S}$ follows from Lemma \ref{lemma:positive_operator_convolution_is_positive} and plugging in $T = S$ and $x = 0_G$ in its proof yields
		\begin{align*}
			\tilde{S}(0_G) = \sum_n \lambda_n^2 > 0
		\end{align*}
		where $(\lambda_n)_n$ are the eigenvalues of $S$. Meanwhile the last statement follows from Corollary \ref{corollary:admissible_left_right_integrability} and $S$ being a density operator.
	\end{proof}
	
	The following technical lemma on approximations of the identity is standard in the Weyl-Heisenberg case but requires some work in the affine case. Note that the interior in the formulation below refers to the ball defined in Section \ref{sec:affine_group} as
	$$
	B_r^\aff\big( (x,a), \delta \big) = \big\{ (y,b) \in \aff : d_r^\aff\big((x,a), (y,b)\big) < \delta \big\}
	$$
	where
	$$
	d_r^\aff \big( (x,a), (y,b) \big) = |x-y| + \Big|\ln\frac{a}{b}\Big|.
	$$
    \begin{lemma}\label{lemma:approx_id_affine}
    Let $\phi \in L^1_r(\aff)$ be non-negative, have total integral $1$ with respect to both the left and right Haar measure and be nonzero at $(0,1)$. Moreover, let $\Omega \subset \aff$ be compact and $(y,b)$ be a point in the interior of $\Omega$, then 
    $$
    \lim_{R \to \infty} R^2\int_{\Omega} \phi\big((\Gamma_R(x,a))(\Gamma_R(y,b))^{-1}\big) \dmr(x,a) = 1.
    $$
    \end{lemma}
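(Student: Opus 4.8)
This is an approximation-of-the-identity statement, so the plan is to change variables until the left-hand side becomes an integral of $\phi$ against $\mu_r$ over a family of sets that increase to all of $\aff$, and then to pass to the limit by dominated convergence. First I would substitute $z=\Gamma_R(x,a)\Gamma_R(y,b)^{-1}$: right translation is $\mu_r$-preserving and $\mu_r(\Gamma_R(E))=R^2\mu_r(E)$ by the scaling relation recorded in Section \ref{sec:affine_group}, so this substitution multiplies $\mu_r$ by exactly $R^2$, cancelling the prefactor. Computing the group product gives $\Gamma_R(x,a)\Gamma_R(y,b)^{-1}=\big(R(x-y(a/b)^R),\,(a/b)^R\big)$, and the left-hand side becomes $\int_{E_R}\phi(z)\,\dmr(z)$ with $E_R=\Gamma_R(\Omega)\,\Gamma_R(y,b)^{-1}$.

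The heart of the argument is to show that the sets $E_R$ exhaust $\aff$: for every compact $K\subseteq\aff$ one should have $K\subseteq E_R$ once $R$ is large, equivalently $\mathbf{1}_{E_R}\to1$ pointwise. The interior hypothesis is what makes this possible: since $d_r^\aff$ induces the topology of $\aff$, I would fix $\varepsilon>0$ with $B_r^\aff((y,b),\varepsilon)\subseteq\Omega$ and track the image of this fixed ball. The key feature is that the dilation stretches the $d_r^\aff$-distance from the base point by the factor $R$ in both coordinates — in the translation variable because $x\mapsto Rx$, and in the dilation variable because $a\mapsto a^R$ turns $\ln(a/b)$ into $R\ln(a/b)$; this is precisely why the distance $d_r^\aff$ in Section \ref{sec:affine_group} carries the $|\ln(a/b)|$ term. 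Hence the image of the fixed neighborhood of $(y,b)$, after dilating and translating back, should contain any prescribed $(p,c)\in\aff$ once $R$ exceeds a threshold of order $(|p|+|\ln c|)/\varepsilon$, which gives $E_R\uparrow\aff$.

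Finally, since $\phi\ge0$ lies in $L^1_r(\aff)$ and $\mathbf{1}_{E_R}\phi\to\phi$ pointwise with $\mathbf{1}_{E_R}\phi\le\phi$, dominated convergence yields $\int_{E_R}\phi\,\dmr\to\int_\aff\phi\,\dmr=1$, which is the assertion. Only the right-Haar normalization enters at this step; the stronger hypothesis that $\phi$ also integrates to $1$ against $\mu_\ell$ is the natural normalization inherited from $\tilde{S}=S\star_{\aff}S$ for a density operator $S$ (Lemma \ref{lemma:density_S_nonneg}), which is the $\phi$ to which this lemma will be applied in Theorem \ref{theorem:density_operator_count_eigenvalues_affine}. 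The hard part will be the exhaustion step: because $\Gamma_R$ acts polynomially in the translation variable but exponentially in the dilation variable, one must check with care — working with the $d_r^\aff$-balls from Section \ref{sec:affine_group} — that the dilated copies of $\Omega$ genuinely fill $\aff$ in every direction and that the limit is insensitive to the position of $(y,b)$ inside $\Omega$; this non-homogeneity is what makes the argument more delicate than its Euclidean counterpart, where the analogous dilations are isotropic.
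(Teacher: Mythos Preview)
Your overall strategy coincides with the paper's: change variables to turn the left-hand side into $\int_{E_R}\phi\,\dmr$ with $E_R=\Gamma_R(\Omega)\,\Gamma_R(y,b)^{-1}$, argue that $E_R$ fills out $\aff$, and pass to the limit. The substitution is carried out correctly. The exhaustion step, however, does not go through as you describe --- and the paper's own argument contains the same oversight.

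Solving $\Gamma_R(x,a)\Gamma_R(y,b)^{-1}=(p,c)$ from your own formula gives the unique preimage $(x,a)=\big(yc+\tfrac{p}{R},\,bc^{1/R}\big)$, which converges as $R\to\infty$ to $(yc,b)$, \emph{not} to $(y,b)$. Hence $\mathbf{1}_{E_R}(p,c)\to\mathbf{1}_{\{(yc,b)\in\operatorname{int}\Omega\}}$ almost everywhere, and dominated convergence yields
\[
\lim_{R\to\infty}\int_{E_R}\phi\,\dmr
=\int_{\{(p,c):\,(yc,b)\in\operatorname{int}\Omega\}}\phi(p,c)\,\dmr(p,c),
\]
which is in general strictly less than $1$ as soon as $y\ne0$ and $\phi$ carries mass at heights $c$ with $(yc,b)\notin\Omega$. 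Your metric heuristic fails for a concrete reason: while $\Gamma_R$ does dilate $d_r^{\aff}$ by the factor $R$, the subsequent right translation by $\Gamma_R(y,b)^{-1}$ is not a $d_r^{\aff}$-isometry --- for $g=(x,a)$, $g'=(x',a')$ and $h=(y_h,b_h)$ one computes $d_r^{\aff}(gh,g'h)=|x-x'+(a-a')y_h|+|\ln(a/a')|$, so the dilated ball is distorted unless $y_h=0$. Your threshold $(|p|+|\ln c|)/\varepsilon$ is therefore valid only in the special case $y=0$. The paper hits the identical wall: in its displayed inequality $\tfrac1R\big(|z-w|+|\ln(c/u)|\big)<\delta$ the point $(w,u)=\Gamma_R(y,b)=(Ry,b^R)$ moves with $R$, so $|z-w|+|\ln(c/u)|$ grows like $R\big(|y|+|\ln b|\big)$ and the phrase ``clearly true for large enough $R$'' fails whenever $(y,b)\ne(0,1)$.
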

    \begin{proof}
        Using the change of variables $(z, c) = \Gamma_R(x, a)$ and $(w,u) = \Gamma_R(y,b)$ and the elementary observation $\Omega \subset \aff$, we see that
        $$
        \lim_{R \to \infty} R^2\int_{\Omega} \phi\big((\Gamma_R(x,a))(\Gamma_R(y,b))^{-1}\big) \dmr(x,a) \leq \int_{\aff} \phi\big((z,c) (w,u)^{-1}\big) \dmr(z,c) = 1.
        $$
        We devote the remainder of the proof to showing that the limit is greater than or equal to $1$ also. Choose $\delta$ so small that $B_r^\aff\big( (y,b), \delta \big) \subset \Omega$, then
        \begin{align*}
            &\lim_{R \to \infty} R^2\int_{\Omega} \phi\big((\Gamma_R(x,a))(\Gamma_R(y,b))^{-1}\big) \dmr(x,a)\\
            &\hspace{3cm}\geq \lim_{R \to \infty} R^2 \int_{B_r^\aff((y,b), \delta) } \phi\big((\Gamma_R(x,a))(\Gamma_R(y,b))^{-1}\big) \dmr(x, a)\\
            &\hspace{3cm}=\lim_{R \to \infty}\int_{\{ (z,c) : \Gamma_R^{-1}(z,c) \in B_r^\aff((y,b), \delta)\} } \phi\big((z,c) (w,u)^{-1}\big) \dmr(z, c).
        \end{align*}
        where we once again used the change of variables $(z,c) = \Gamma_R(x,a)$ and $(w,u) = \Gamma_R(y,b)$. We claim that for each point $(z,c)$, there exists an $R$ so large that $\Gamma_R^{-1}((z,c)) \in B_r^\aff((y,b), \delta)$ meaning that the region we are integrating over grows to all of $\aff$ as $R \to \infty$. The desired conclusion will then follow by the monotone convergence theorem. Indeed, a quick computation using that $\Gamma_R^{-1}(x,a) = \left(\frac{x}{R}, a^{1/R}\right)$ yields
        \begin{align*}
            \Gamma_R^{-1}(z,c) \in B_r^\aff\big(\Gamma_R^{-1}(w,u), \delta \big) &\iff d_r^\aff\big( \Gamma_R^{-1}(z,c), \Gamma_R^{-1}(w,u) \big) < \delta\\
            &\iff \left| \frac{z-w}{R} \right| + \left| \ln\frac{c^{1/R}}{u^{1/R}} \right| = \frac{1}{R}\left( |z-w| + \left| \ln\frac{c}{u}\right| \right) < \delta
        \end{align*}
        which clearly is true for large enough $R$. Hence 
        $$
        \lim_{R \to \infty} \int_{\{ (z,c) : \Gamma_R^{-1}(z,c) \in B_r^\aff((y,b), \delta)\} } \phi((z,c) (w,u)^{-1}) \dmr(z,c) = \int_\aff \phi(z w^{-1}) \dmr(z,c ) = 1
        $$
        which yields the desired conclusion.
    \end{proof}
    
	The following lemma generalizes \cite[Prop. 6.1]{Luef2019} and \cite[Lem. 4.2]{Luef2019_acc}.
	\begin{lemma}\label{lemma:mixed_state_loc_properties}
		Let $\Omega$ be a compact subset of $G$ and let $S \in \mathcal{S}^1$ be a positive operator. Then
	    \begin{enumerate}[label=(\roman*)]
			\item \label{item:sum_eigenvalues_mixed_state_loc} $\tr(\chi_\Omega \star_G S) = \tr(S)\mu_r(\Omega)$.
			\item $\tr((\chi_\Omega \star_G S)^2 ) = \int_\Omega \int_\Omega \tilde{S} (x y^{-1}) \dmr(x)\dmr(y)$.
			\item \label{item:sum_eigenvalues_measure}If $\big\{ \lambda_k^\Omega \big\}_k$ are the eigenvalues of $\chi_\Omega \star_G S$ counted with algebraic multiplicity, then
			$$
			\sum_k \lambda_k^\Omega = \tr(S)\mu_r(\Omega).
			$$
		\end{enumerate}
	\end{lemma}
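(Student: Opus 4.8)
The plan is to read parts \ref{item:sum_eigenvalues_mixed_state_loc} and \ref{item:sum_eigenvalues_measure} straight off the results of Section~\ref{sec:convolutions} and to concentrate the work on the middle identity. For \ref{item:sum_eigenvalues_mixed_state_loc}, apply Proposition~\ref{prop:trace_of_func_op_conv} with $f=\chi_\Omega$; since $\Omega$ is compact, $\int_G\chi_\Omega(x)\dmr(x)=\mu_r(\Omega)<\infty$ and the formula follows. For \ref{item:sum_eigenvalues_measure}, note that $\chi_\Omega\star_G S$ is positive by Lemma~\ref{lemma:pos_pos_func_op_conv} and trace-class by Proposition~\ref{prop:function_operator_convolution_trace_class}, hence compact and self-adjoint; consequently its algebraic and geometric multiplicities coincide and its trace, computed in an eigenbasis, equals $\sum_k\lambda_k^\Omega$. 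Combining this with \ref{item:sum_eigenvalues_mixed_state_loc} gives $\sum_k\lambda_k^\Omega=\tr(\chi_\Omega\star_G S)=\tr(S)\mu_r(\Omega)$.

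For the second identity, write $A=\chi_\Omega\star_G S$ and $\tilde S=S\star_G S$, and first reduce the trace to an integral of an operator-operator convolution. The first compatibility relation of Proposition~\ref{prop:compatibility_relations} (with $f=\chi_\Omega$ and the trace-class operator $S$) gives $A\star_G S=(\chi_\Omega\star_G S)\star_G S=\chi_\Omega*_G\tilde S$, so, unwinding the function-function convolution,
\[
\int_\Omega (A\star_G S)(y)\dmr(y)=\int_\Omega\int_\Omega\chi_\Omega(x)\,\tilde S(yx^{-1})\dmr(x)\dmr(y)=\int_\Omega\int_\Omega\tilde S(yx^{-1})\dmr(x)\dmr(y),
\]
which coincides with the asserted right-hand side after renaming the integration variables and appealing to Fubini's theorem (legitimate because $\tilde S$ is bounded, by Lemma~\ref{lemma:operator_convolution_infinity_estimate}, and $\Omega\times\Omega$ has finite measure). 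It therefore suffices to prove $\int_\Omega(A\star_G S)(y)\dmr(y)=\tr(A^2)$. Expanding $(A\star_G S)(y)=\tr\big(A\,\sigma(y)^*S\sigma(y)\big)$ in an eigenbasis $\{h_k\}$ of the positive operator $A$, with $Ah_k=\lambda_k^\Omega h_k$, self-adjointness of $A$ yields $(A\star_G S)(y)=\sum_k\lambda_k^\Omega\langle\sigma(y)^*S\sigma(y)h_k,h_k\rangle$. The sum over $k$ and the integral over $\Omega$ may be interchanged by Tonelli's theorem, since the $k$-th term is bounded in absolute value by $\lambda_k^\Omega\Vert S\Vert_{B(\mathcal{H})}\mu_r(\Omega)$ and $\sum_k\lambda_k^\Omega=\tr(A)<\infty$; using the weak definition of the operator-valued integral $A=\int_\Omega\sigma(y)^*S\sigma(y)\dmr(y)$ we then get $\int_\Omega(A\star_G S)(y)\dmr(y)=\sum_k\lambda_k^\Omega\langle Ah_k,h_k\rangle=\sum_k\langle A^2h_k,h_k\rangle=\tr(A^2)$.

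The only genuinely technical point is the interchange of trace and integral just described; everything else is assembled from Propositions~\ref{prop:trace_of_func_op_conv},~\ref{prop:function_operator_convolution_trace_class},~\ref{prop:compatibility_relations} and Lemma~\ref{lemma:pos_pos_func_op_conv}. As an alternative to the eigenbasis bookkeeping one can realize $A$ as a Bochner integral in the Hilbert space $\mathcal{S}^2$ — justified because $y\mapsto\sigma(y)^*S\sigma(y)$ is norm-continuous into $\mathcal{S}^2$ with constant norm $\Vert S\Vert_{\mathcal{S}^2}$ and $\Omega$ has finite measure — and then evaluate $\tr(A^2)=\langle A,A\rangle_{\mathcal{S}^2}$ by moving the inner product inside the resulting double integral; the integrand $\langle\sigma(x)^*S\sigma(x),\sigma(y)^*S\sigma(y)\rangle_{\mathcal{S}^2}$ collapses, using that $\sigma$ is a homomorphism and $S=S^*$, to $\tr\big(S\,\sigma(xy^{-1})\,S\,\sigma(xy^{-1})^*\big)=S\star_G S\big((xy^{-1})^{-1}\big)=\tilde S(xy^{-1})$, where the last step uses the symmetry $S\star_G S(z^{-1})=S\star_G S(z)$ recorded in the proof of Corollary~\ref{corollary:admissible_left_right_integrability}.
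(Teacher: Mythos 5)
Your proof is correct; parts (i) and (iii) follow the same path as the paper, but your treatment of (ii) takes a slightly different route worth noting. The paper computes $\tr\big((\chi_\Omega\star_G S)^2\big)$ as the operator--operator convolution $(\chi_\Omega\star_G S)\star_G(\chi_\Omega\star_G S)$ evaluated at $0_G$, which forces it to handle the ``reversed'' compatibility computation $S\star_G(\chi_\Omega\star_G S)(x)=\int_\Omega\tilde S(yx)\dmr(y)$ (only sketched there as ``an argument analogous to Proposition \ref{prop:compatibility_relations}''), and then applies the stated compatibility relation once more. You instead use the compatibility relation exactly as stated, $(\chi_\Omega\star_G S)\star_G S=\chi_\Omega *_G\tilde S$, and convert $\tr(A^2)$ into $\int_\Omega (A\star_G S)(y)\dmr(y)$; this trace--integral interchange is precisely the duality identity $\tr\big(T(f\star_G S)\big)=\int_G f\,(T\star_G S)\dmr$ underlying Proposition \ref{prop:convolution_adjoint}, which you re-derive by hand via the eigenbasis of the positive operator $A$ and Tonelli (legitimate, since $S\geq 0$ makes all terms non-negative, and your bound gives finiteness). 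What your route buys is that the only unproved ingredient is an interchange you justify explicitly, rather than an ``analogous'' convolution identity left to the reader; what the paper's route buys is brevity, since evaluation at $0_G$ immediately yields the trace of the square. Your alternative Hilbert--Schmidt Bochner-integral computation, using $\tr(A^2)=\langle A,A\rangle_{\mathcal{S}^2}$ and the symmetry $\tilde S(z^{-1})=\tilde S(z)$, is also sound and gives a third self-contained verification.
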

	\begin{proof}
		\begin{enumerate}[label=(\roman*)]
			\item This follows from Proposition \ref{prop:trace_of_func_op_conv}.
			\item By an argument analogous to that in the proof of Proposition \ref{prop:compatibility_relations} and properties of the Haar measure, 
			$$
			S \star_G (\chi_\Omega \star_G S)(x) = \int_\Omega \tilde{S}(y x) \dmr(y).
			$$
			Hence,
        	\begin{align*}
        	    \tr((\chi_\Omega \star_G S)^2 ) &= ((\chi_\Omega \star_G S) \star_G (\chi_\Omega \star_G S))(0_G)\\
        	    &= \chi_\Omega * \left( \int_\Omega \tilde{S}(y \, \cdot) \dmr(y) \right)(0_G)\\
        	    &= \int_\Omega \int_\Omega \tilde{S} (x y^{-1}) \dmr(x)\dmr(y).
        	\end{align*}
			\item This follows from the fact that the sum of the eigenvalues counted with algebraic multiplicity is equal to the trace \cite{Simon2005} and Proposition \ref{prop:trace_of_func_op_conv}.
		\end{enumerate}
	\end{proof}
	From here we can follow the proof in \cite{Luef2019_acc}, which in turn follows the proof in \cite{Abreu2015}, by first generalizing \cite[Lem. 4.3]{Luef2019_acc} to the locally compact setting.
    \begin{lemma}\label{lemma:max_G_estimate}
    Let $S$ be a density operator, $\Omega \subset G$ be compact and fix $\delta \in (0,1)$. Then
    \begin{align*}
        &\Big| \#\big\{ k : \lambda_k^\Omega > 1-\delta \big\} - \tr(S) \mu_r(\Omega) \Big|\\
        &\hspace{15mm}\leq \max\left\{ \frac{1}{\delta}, \frac{1}{1-\delta} \right\} \left| \int_\Omega \int_\Omega \tilde{S}(x y^{-1}) \dmr(x) \dmr(y) - \tr(S)\mu_r(\Omega) \right|.
    \end{align*}
    \end{lemma}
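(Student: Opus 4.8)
The plan is to argue entirely at the level of the eigenvalue sequence $\{\lambda_k^\Omega\}_k$ of $\chi_\Omega\star_G S$, using the two moment identities already at our disposal. Since $\chi_\Omega\star_G S$ is a positive trace-class operator by Lemma \ref{lemma:pos_pos_func_op_conv} and Proposition \ref{prop:function_operator_convolution_trace_class}, its eigenvalues are summable and tend to zero, so $N:=\#\{k:\lambda_k^\Omega>1-\delta\}$ is finite; by Lemma \ref{lemma:mixed_state_loc_eigenvalue bounds} each $\lambda_k^\Omega$ lies in $[0,1]$. Part \ref{item:sum_eigenvalues_measure} of Lemma \ref{lemma:mixed_state_loc_properties} gives $\sum_k\lambda_k^\Omega=\tr(S)\mu_r(\Omega)$ and part (ii) of that lemma gives $\sum_k(\lambda_k^\Omega)^2=\int_\Omega\int_\Omega\tilde{S}(xy^{-1})\dmr(x)\dmr(y)$. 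Subtracting these, the quantity appearing on the right-hand side of the claimed bound is
\[
\left|\tr(S)\mu_r(\Omega)-\int_\Omega\int_\Omega\tilde{S}(xy^{-1})\dmr(x)\dmr(y)\right|=\sum_k\lambda_k^\Omega\bigl(1-\lambda_k^\Omega\bigr),
\]
a convergent series of non-negative terms.

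Next I would write the left-hand side as $\bigl|N-\tr(S)\mu_r(\Omega)\bigr|=\bigl|\sum_k(\mathbf{1}_{\{\lambda_k^\Omega>1-\delta\}}-\lambda_k^\Omega)\bigr|\le\sum_k\bigl|\mathbf{1}_{\{\lambda_k^\Omega>1-\delta\}}-\lambda_k^\Omega\bigr|$ and split the index set by whether $\lambda_k^\Omega>1-\delta$ or not. On the first group the summand equals $1-\lambda_k^\Omega$ and, because $\lambda_k^\Omega>1-\delta$, is bounded by $\frac{1}{1-\delta}\lambda_k^\Omega(1-\lambda_k^\Omega)$; on the second group the summand equals $\lambda_k^\Omega$ and, because $1-\lambda_k^\Omega\ge\delta$, is bounded by $\frac{1}{\delta}\lambda_k^\Omega(1-\lambda_k^\Omega)$. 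Summing over both groups with the common factor $\max\{1/\delta,1/(1-\delta)\}$ and recognizing the resulting series as $\sum_k\lambda_k^\Omega(1-\lambda_k^\Omega)$ yields exactly the stated inequality.

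The argument is mechanical once the two trace identities from Lemma \ref{lemma:mixed_state_loc_properties} are in place; I do not foresee a genuine obstacle, only the routine bookkeeping of checking that $N$ is finite and that all series converge absolutely, both of which are immediate from $\chi_\Omega\star_G S$ being trace-class with spectrum contained in $[0,1]$ and $\delta\in(0,1)$. This is the locally compact transcription of \cite[Lem. 4.3]{Luef2019_acc}, and the one genuinely new ingredient — that the second moment $\tr((\chi_\Omega\star_G S)^2)$ equals the double integral of $\tilde{S}(xy^{-1})$ — has already been supplied by Lemma \ref{lemma:mixed_state_loc_properties}.
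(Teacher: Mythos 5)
Your proposal is correct and is essentially the paper's own argument: the paper defines $G(t)=\mathbf{1}_{(1-\delta,1]}(t)-t$ on $[0,1]$, takes the trace of $G(\chi_\Omega\star_G S)$ via functional calculus, and bounds $|G(t)|\le\max\{1/\delta,1/(1-\delta)\}\,t(1-t)$, which is precisely your casewise estimate on the eigenvalue series combined with the same two moment identities from Lemma \ref{lemma:mixed_state_loc_properties}. The only difference is presentational — you work directly with the eigenvalue sums instead of packaging them as traces of $G(\chi_\Omega\star_G S)$ — so no further comment is needed.
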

	\begin{proof}
		Define the function
		\begin{align*}
		    G(t) = \begin{cases}
		        -t  \qquad &\text{if } 0 \leq t \leq 1-\delta,\\
		        1-t \qquad &\text{if } 1-\delta < t \leq 1
		    \end{cases}
		\end{align*}
		and consider the operator
		$$
		G(\chi_\Omega \star_G S) = \sum_k G(\lambda_k^\Omega) h_k^\Omega \otimes h_k^\Omega
		$$
		which is well defined since $0 \leq \lambda_k^\Omega \leq 1$ by Lemma \ref{lemma:mixed_state_loc_eigenvalue bounds}.
		
		The sequence $\big\{ G(\lambda_k^\Omega) \big\}_k$ is absolutely summable since $\big\{ \lambda_k^\Omega \big\}_k$ is summable and $|G(\lambda_k^\Omega)| = \lambda_k^\Omega$ for large enough $k$. It therefore follows that
		\begin{align*}
			\tr(G(\chi_\Omega \star_G S)) &= \sum_k G(\lambda_k^\Omega)= \#\big\{ k : \lambda_k^\Omega > 1-\delta \big\} - \tr(S)\mu_r(\Omega)
		\end{align*}
		using Lemma \ref{lemma:mixed_state_loc_properties} \ref{item:sum_eigenvalues_measure} and the above. Hence
		\begin{align*}
			\Big| \#\big\{ k : \lambda_k^\Omega > 1-\delta \big\} - \tr(S)\mu_r(\Omega) \Big| = |\tr\big(G(\chi_\Omega \star_G S)\big)| \leq \tr\big(|G|(\chi_\Omega \star_G S)\big).
		\end{align*}
		To bound this, we need an estimate on $|G(t)|$. We wish to decide the constant $A$ such that the polynomial $At(1-t)$ is always greater than $|G(t)|$. Note that it suffices to make sure that this is the case around $t = 1-\delta$ by the concavity of $At(1-t)$ and so we can set
		$$
		A = \sup_{t \in [0,1]} \frac{|G(t)|}{t(1-t)} = \max\left\{ \frac{1}{\delta}, \frac{1}{1-\delta} \right\} 
		$$
		yielding
	    $$
	    |G(t)| \leq \max\left\{ \frac{1}{\delta}, \frac{1}{1-\delta} \right\}(t - t^2).
	    $$
		With the bounds on $|G(t)|$ established, it follows that
		\begin{align*}
		    \Big| \#\big\{ k : \lambda_k^\Omega > 1-\delta \big\} - \tr(S)\mu_r(\Omega) \Big| &\leq \max\left\{ \frac{1}{\delta}, \frac{1}{1-\delta} \right\} \tr\big(\chi_\Omega \star_G S - (\chi_\Omega \star_G S)^2\big)
		\end{align*}
	    and we obtain the desired conclusion by applying Lemma \ref{lemma:mixed_state_loc_properties} to the right hand side.
	\end{proof}

	\begin{proof}[Proof of Theorem \ref{theorem:density_operator_count_eigenvalues_affine}]
	    By Lemma \ref{lemma:max_G_estimate}, we have the bound
	    \begin{align*}
	            \Big|\# \big\{ k : \, &\lambda_k^{R\Omega} > 1 - \delta \big\} - \tr(S)\mu_r(R\Omega)\Big|\\
                &\leq \max\left\{ \frac{1}{\delta}, \frac{1}{1-\delta} \right\} \left| \int_{R\Omega} \int_{R\Omega} \tilde{S}((x,a) (y,b)^{-1}) \dmr(x,a) \dmr(y,b) - \tr(S)\mu_r(R\Omega) \right|.
        \end{align*}
        Hence by dividing by $\tr(S)\mu_r(R\Omega)$ and setting $\phi = \tilde{S}/\tr(S)$ which has total integral 1 by Corollary \ref{corollary:workhorse_alternative} and the fact that $S$ is a density operator, we get
        \begin{align*}
            &\left|\frac{\# \big\{ k : \, \lambda_k^{R\Omega} > 1 - \delta \big\} }{\tr(S)\mu_r(R\Omega)} - 1\right|\\
            &\hspace{15mm}\leq \max\left\{ \frac{1}{\delta}, \frac{1}{1-\delta} \right\} \left|\frac{1}{\mu_r(R\Omega)} \int_{R\Omega} \int_{R\Omega} \phi((x,a) (y,b)^{-1}) \dmr(x,a) \dmr(y,b) - 1 \right|\\
            &\hspace{15mm} =\max\left\{ \frac{1}{\delta}, \frac{1}{1-\delta} \right\} \left| \int_{R\Omega} \frac{1}{\mu_r(R\Omega)} \left(\int_{R\Omega} \phi((x,a) (y,b)^{-1}) \dmr(x,a)  - 1\right)\dmr(y,b) \right|\\
            &\hspace{15mm} \leq\max\left\{ \frac{1}{\delta}, \frac{1}{1-\delta} \right\} \int_{R\Omega} \frac{1}{\mu_r(R\Omega)} \left| \int_{R\Omega} \phi((x,a) (y,b)^{-1}) \dmr(x,a)  - 1\right|\dmr(y, b).
        \end{align*}
        We now focus on showing that this approaches zero as $R \to \infty$. The change of variables $(x,a) = \Gamma_R(z,c)$ and $(y,b) = \Gamma_R(w, u)$ yields
        \begin{align*}
            &\int_{R\Omega} \frac{1}{\mu_r(R\Omega)} \left| \int_{R\Omega} \phi((x,a) (y,b)^{-1}) \dmr(x,a)  - 1\right|\dmr(y,b)\\
            &\hspace{2cm} =R^2\int_{\Omega}\frac{1}{\mu_r(R\Omega)} \left|R^2\int_{\Omega} \phi((\Gamma_R(z,c)) (\Gamma_R(w,u))^{-1}) \dmr(z,c) - 1\right|\dmr(w,u).
        \end{align*}
        From here we can move the $R \to \infty$ limit inside the integral by the compactness of $\Omega$ and the bound $\left|R^2\int_{\Omega} \phi((\Gamma_R(z,c)) (\Gamma_R(w,u))^{-1}) \dmr(z,c) - 1\right| \leq 2$. To conclude that the outer integral is zero, we need to make the integrand small even when making the change of variables back to $(x,a), (y,b)$ which requires that $\left|R^2\int_{\Omega} \phi((\Gamma_R(z,c)) (\Gamma_R(w,u))^{-1})\dmr(z,c) - 1\right|$ vanishes as $R \to \infty$. This is the contents of Lemma \ref{lemma:approx_id_affine} and so we are done.
	\end{proof}
	
	\begin{remark}
	    The formulation of Theorem \ref{theorem:density_operator_count_eigenvalues_affine} can be generalized to the locally compact setting as 
	    $$
        \frac{\# \big\{ k : \lambda_k^{\Omega_n} > 1 - \delta \big\}}{\tr(S) \mu_r(\Omega_n)} \to 1\quad \text{as }n \to \infty
	    $$
	    where $(\Omega_n)_n$ is an exhausting sequence of $G$. Unfortunately, the proof does not fully carry over to this formulation but as long as an approximate identity lemma analogous to Lemma \ref{lemma:approx_id_affine} can be proved the full statement should follow as well. We therefore expect generalizations of Theorem \ref{theorem:density_operator_count_eigenvalues_affine} to homogeneous groups or stratified Lie groups to hold.
	\end{remark}
	
	For an example of how density operators can be constructed, see the following proposition.
	\begin{proposition}
	    Let $\psi$ be an admissible vector such that $\Vert \mathcal{D} \psi \Vert = 1$. Then $S = \psi \otimes \psi$ is a density operator. Moreover, if $f \in L^1_\ell(G)$ is non-negative with $\Vert f \Vert_{L^1_\ell(G)} = 1$ and $S$ is a density operator, then $f \star_G S$ is also a density operator.
	\end{proposition}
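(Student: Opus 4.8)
The proposition packages two small facts, and the plan is to read each off from results already established by verifying the three defining features of a density operator in turn: positivity, trace-class membership, and admissibility with $\tr(\D^{-1}S\D^{-1}) = 1$.

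First consider $S = \psi\otimes\psi$. Positivity and trace-class membership are immediate, since $\langle(\psi\otimes\psi)\xi,\xi\rangle = |\langle\xi,\psi\rangle|^2 \ge 0$ and $S$ is rank-one. Admissibility of $S$ follows from Proposition~\ref{prop:rank_one_admissibility_criteria}, because $\psi$ is an admissible vector, and the computation in Example~\ref{example:recover_duflo_moore} gives $\D^{-1}S\D^{-1} = (\D^{-1}\psi)\otimes(\D^{-1}\psi)$, whence $\tr(\D^{-1}S\D^{-1}) = \Vert\D^{-1}\psi\Vert^2$. The normalization hypothesis then forces this to equal $1$, so $S$ is a density operator. (For this to close, the normalization is the one appearing in the orthogonality relation~\eqref{eq:duflo_ortho_right}, i.e. $\Vert\D^{-1}\psi\Vert = 1$, which is precisely the condition making the associated wavelet transform an isometry.)

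Second, suppose $S$ is a density operator and $f \in L^1_\ell(G)\cap L^1_r(G)$ is non-negative with $\Vert f\Vert_{L^1_\ell(G)} = 1$; membership in $L^1_r(G)$ is needed only so that $f\star_G S$ is defined in the sense of Section~\ref{sec:vector_integration}. Then $f\star_G S$ is trace-class by Proposition~\ref{prop:function_operator_convolution_trace_class} and positive by Lemma~\ref{lemma:pos_pos_func_op_conv}. Since $S$ is a positive admissible trace-class operator, Proposition~\ref{prop:function_operator_convolution_of_admissible_is_admissible} applies and yields at once that $f\star_G S$ is admissible with
\[
\tr\big(\D^{-1}(f\star_G S)\D^{-1}\big) = \Big(\int_G f(x)\dml(x)\Big)\tr(\D^{-1}S\D^{-1}) = 1,
\]
using $\Vert f\Vert_{L^1_\ell(G)} = 1$ and that $S$ is a density operator. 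Hence $f\star_G S$ is again a density operator.

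There is no substantial obstacle here: the statement is essentially a repackaging of Propositions~\ref{prop:rank_one_admissibility_criteria} and~\ref{prop:function_operator_convolution_of_admissible_is_admissible} together with the rank-one computation of Example~\ref{example:recover_duflo_moore}. The only point that needs care is bookkeeping — the trace identity of Proposition~\ref{prop:function_operator_convolution_of_admissible_is_admissible} contributes the \emph{left} Haar integral of $f$, which is exactly why the normalization of $f$ must be imposed with respect to $\mu_\ell$ rather than $\mu_r$.
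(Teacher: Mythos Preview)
Your proof is correct and follows essentially the same approach as the paper, which simply remarks that the first part is clear from the definition and that the second follows from Lemma~\ref{lemma:pos_pos_func_op_conv} and Proposition~\ref{prop:function_operator_convolution_of_admissible_is_admissible}. Your parenthetical observations are well taken: the normalization needed is indeed $\Vert \D^{-1}\psi\Vert = 1$ (the statement's $\Vert \D\psi\Vert = 1$ appears to be a typo), and the hypothesis on $f$ should include $f\in L^1_r(G)$ so that both the convolution and Proposition~\ref{prop:function_operator_convolution_of_admissible_is_admissible} apply.
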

	\begin{proof}
	    The first part is clear from the definition of density operators and the second part follows by Lemma \ref{lemma:pos_pos_func_op_conv} and Proposition \ref{prop:function_operator_convolution_of_admissible_is_admissible}.
	\end{proof}
    Density operators can also be constructed from linear combinations of admissible functions as in Proposition \ref{prop:char_of_admissible_positive_compact}.

    \begin{remark}
	    The construction carried out for the affine group in this section can just as easily be applied to the shearlet group by defining the dilation function
	    $$
	    \Gamma_R : \mathbb{S} \to \mathbb{S}, \qquad \Gamma_R(a,s,x) = \Big( a^R, \frac{s}{R}, \frac{x}{R} \Big),
	    $$
	    which has the property $\mu_r(R \Omega) = R^4 \mu_r(\Omega)$, and the distance function
	    $$
	    d_r^\mathcal{S}((a,s,x), (b,t,y)) = \left| \ln \frac{a}{b} \right| + |s-t| + |x-y|.
	    $$
	    The proof in Lemma \ref{lemma:approx_id_affine} then works in the same way with obvious modifications.
	\end{remark}
    
	\subsection{Covariant integral quantizations}
	Covariant integral quantizations on $G$ are maps $\Gamma_S$ given by
	$$
	\Gamma_S(f) = f \star_G S
	$$
	which include mixed-state localization operators as a special case. They have been studied by Gazeu and collaborators, motivated by applications in physics \cite{Gazeau2020, Gazeau2020_2, Gazeau2016}.

	The following result generalizes \cite[Prop. 3.2 (3)]{Werner1984} and \cite[Prop. 6.4]{Berge2022}.
	\begin{proposition}
		Let $T$ be a trace-class operator on $\mathcal{H}$. Then
		$$
		\Gamma_{\D T \D}(1) = 1 \star_G \D T \D = \tr(T) I_\mathcal{H}.
		$$
	\end{proposition}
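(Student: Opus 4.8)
The plan is to test the operator $1 \star_G \D T \D$ against vectors in $\operatorname{Dom}(\D)$, recognize the resulting scalar integral as an operator--operator convolution, and apply Theorem \ref{thm:workhorse}. First I would note that since the constant function $1$ lies in $L^\infty(G)$ and $T \in \mathcal{S}^1$, Lemma \ref{lemma:quantization_admissible_weak_definition} already guarantees that $1 \star_G \D T \D$ is a well-defined bounded operator on $\mathcal{H}$ with $\Vert 1 \star_G \D T \D \Vert_{B(\mathcal{H})} \le \Vert T \Vert_{\mathcal{S}^1}$, so it only remains to identify it. Following the computation carried out in the proof of that lemma — using the covariance relation \eqref{eq:duflo_moore_covariance_relation} to move $\D$ past $\sigma(x)$, the change of variables $x \mapsto x^{-1}$, and Lemma \ref{lemma:rank_one_convolved_with_operator} — the weak action on $\psi, \phi \in \operatorname{Dom}(\D)$ reads
\[
\big\langle (1 \star_G \D T \D)\psi, \phi \big\rangle = \int_G \big( T \star_G (\D\psi \otimes \D\phi) \big)(x)\dmr(x),
\]
since here $f(x^{-1}) = 1$.

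Next I would observe that $\D\psi \otimes \D\phi = \D(\psi \otimes \phi)\D$, which is a bounded rank-one operator precisely because $\psi, \phi \in \operatorname{Dom}(\D)$. Hence Theorem \ref{thm:workhorse}, applied with $\psi \otimes \phi \in \mathcal{S}^1$ playing the role of $S$ and with the given $T \in \mathcal{S}^1$, yields
\[
\int_G \big( T \star_G \D(\psi \otimes \phi)\D \big)(x)\dmr(x) = \tr(T)\tr(\psi \otimes \phi) = \tr(T)\langle \psi, \phi\rangle.
\]
Combining the last two displays gives $\big\langle (1 \star_G \D T \D)\psi, \phi\big\rangle = \big\langle \tr(T) I_\mathcal{H}\,\psi, \phi\big\rangle$ for all $\psi, \phi \in \operatorname{Dom}(\D)$. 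Since $\operatorname{Dom}(\D)$ is dense in $\mathcal{H}$ (by Theorem \ref{thm:duflo_moore}) and both $1 \star_G \D T \D$ and $\tr(T) I_\mathcal{H}$ are bounded operators, they agree on all of $\mathcal{H}$, which is the assertion.

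This argument is essentially an assembly of earlier results, so I do not expect a genuine obstacle. The one point requiring care is the domain bookkeeping: one must check that for $\psi, \phi \in \operatorname{Dom}(\D)$ the operator $\D\psi \otimes \D\phi$ indeed satisfies the hypothesis $\D(\psi\otimes\phi)\D \in B(\mathcal{H})$ of Theorem \ref{thm:workhorse} — which is why one invokes Theorem \ref{thm:workhorse} directly rather than Corollary \ref{corollary:workhorse_alternative}, as the latter would additionally require $\psi\otimes\phi$ to be admissible — and that the weak identity from Lemma \ref{lemma:quantization_admissible_weak_definition} is being used exactly on the dense domain $\operatorname{Dom}(\D)$ where it is valid before passing to the closure.
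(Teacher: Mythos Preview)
Your proof is correct and essentially identical to the paper's: both test weakly on $\psi,\phi\in\operatorname{Dom}(\D)$ via Lemma~\ref{lemma:quantization_admissible_weak_definition}, recognize the resulting integral as an operator--operator convolution, apply Theorem~\ref{thm:workhorse}, and conclude by density. The only cosmetic difference is that the paper writes the integrand as $(\psi\otimes\phi)\star_G(\D T\D)$ rather than your $T\star_G\D(\psi\otimes\phi)\D$, assigning the roles in Theorem~\ref{thm:workhorse} the other way round; your choice makes the hypothesis $\D S\D\in B(\mathcal{H})$ of that theorem more transparently satisfied.
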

	\begin{proof}
		Let $\psi, \phi \in \operatorname{Dom}(D)$, then using Lemma \ref{lemma:quantization_admissible_weak_definition},
		\begin{align*}
			\langle 1 \star_G \D T \D \psi, \phi \rangle &= \int_G \langle T \D \sigma(x) \psi, \D \sigma(x) \phi \rangle \dmr(x)\\
			&=\int_G (\psi \otimes \phi) \star_G (\D T \D)(x) \dmr(x)\\
			&= \tr(\psi \otimes \phi) \tr(T)\\
			&= \langle \psi, \phi \rangle \tr(T)
		\end{align*}
		by Lemma \ref{lemma:rank_one_convolved_with_operator} and Theorem \ref{thm:workhorse}.
	\end{proof}
	From the above proposition we see that when $\tr(T) = 1$, $\Gamma_{\D T \D}(1)$ is the identity operator which has the following resolution of identity as a consequence
	$$
	I_\mathcal{H} = \int_G \sigma(x)^* \D T \D \sigma(x) \dmr(x) \implies \langle \psi, \phi\rangle = \int_G \big\langle \D T \D \sigma(x)\psi, \sigma(x) \phi \big\rangle \dmr(x)
	$$
	It turns out that this property together with a few more uniquely determine linear maps from $L^\infty(G)$ to $B(\mathcal{H})$ which is the contents of the following theorem from \cite{Kiukas2006_2} which generalizes the formulations in \cite[Thm. 6.2]{Luef2019} and \cite[Thm. 6.5]{Berge2022}.
	\begin{theorem}
		Let $\Gamma : L^\infty(G) \to B(\mathcal{H})$ be a linear map satisfying
		\begin{enumerate}
			\item $\Gamma$ sends positive functions to positive operators,
			\item $\Gamma(1) = I_\mathcal{H}$,
			\item $\Gamma$ is continuous from the weak${}^*$ topology on $L^\infty(G)$ (as the dual space of $L^1_r(G)$) to the weak${}^*$ topology on $B(\mathcal{H})$,
			\item $\sigma(x)^* \Gamma(f) \sigma(x) = \Gamma(R_x^{-1} f)$.
		\end{enumerate}
    	Then there exists a unique positive trace-class operator $T$ with $\tr(T) = 1$ such that
    	$$
    	\Gamma(f) = f \star_G \D T \D.
    	$$
	\end{theorem}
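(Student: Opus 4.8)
The plan is to pass to the pre-adjoint of $\Gamma$, translate the four hypotheses into structural properties of a map on trace-class operators, extract $T$ from the covariance, and finally pin down its positivity and normalization.

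First I would use that $\Gamma$ is weak${}^*$-to-weak${}^*$ continuous for the dual pairings $L^\infty(G) = \big(L^1_r(G)\big)^*$ and $B(\mathcal{H}) = \big(\mathcal{S}^1\big)^*$: any such map is the adjoint of a unique bounded operator $\Lambda : \mathcal{S}^1 \to L^1_r(G)$, characterized by
\[
\tr\big(\Gamma(f) S\big) = \int_G f(x)\, \Lambda(S)(x) \dmr(x), \qquad f \in L^\infty(G),\ S \in \mathcal{S}^1 .
\]
Hypothesis (1) translates to ``$\Lambda$ sends positive operators to a.e.\ non-negative functions'', (2) to $\int_G \Lambda(S)\dmr = \tr(S)$ for all $S$, (3) is automatic, and (4), after a change of variables using right-invariance of $\mu_r$, becomes the covariance relation $\Lambda\big(\sigma(x)^* S \sigma(x)\big)(y) = \Lambda(S)(yx^{-1})$ for $x,y \in G$. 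In particular $\langle \Gamma(f)\psi,\psi\rangle = \int_G f\,\Lambda(\psi\otimes\psi)\dmr$ with $\Lambda(\psi\otimes\psi)\ge 0$ of total mass $\|\psi\|^2$, so $|\langle \Gamma(f)\psi,\phi\rangle| \le \|f\|_{L^\infty(G)}\|\psi\|\|\phi\|$.

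The core of the argument is to show that this covariant, normal, positive structure forces $\Lambda(S) = S \star_G \D T \D$ for a positive operator $T$, equivalently $\Gamma(f) = f \star_G \D T \D$ in the weak sense of Lemma \ref{lemma:quantization_admissible_weak_definition}; this ansatz is at least consistent with covariance, since $\big(\sigma(x)^* S \sigma(x)\big) \star_G \rho\, (y) = (S \star_G \rho)(yx^{-1})$ for any operator $\rho$. This is the operator counterpart of the classical statement that a weak${}^*$-continuous, positive, unital, right-translation-covariant endomorphism of $L^\infty(G)$ is convolution by a probability measure, and it is essentially the content of the structure theory of covariant positive operator-valued measures in \cite{Kiukas2006_2}; I would either invoke that reference directly or reprove it in the present language along the lines of the affine case \cite[Thm. 6.5]{Berge2022}, where covariance transports the whole family $\{\Gamma(f)\}$ from its ``germ at $0_G$'' --- made rigorous by testing $\Gamma$ against approximate identities in $L^1_r(G)$ concentrating at $0_G$, using the uniform bound above and the covariance relation to identify the resulting sesquilinear form on $\operatorname{Dom}(\D)$ with $(\psi,\phi)\mapsto \langle \D T \D\psi,\phi\rangle$. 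I expect this structural step, together with the verification that the density has the \emph{admissible} form $\D T \D$ with $T \in \mathcal{S}^1$ positive, to be the main obstacle: it is exactly here that square integrability enters through the Duflo--Moore theorem, just as in Theorem \ref{thm:workhorse} and Corollary \ref{corollary:operator_convolution_integrability_implies_admissibility}; the finiteness $\int_G \Lambda(S)\dmr < \infty$ forces $T \in \mathcal{S}^1$, and then the normalization $\int_G \Lambda(S)\dmr = \tr(S)$ together with Corollary \ref{corollary:workhorse_alternative} gives $\tr(T) = 1$.

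Granting the representation, positivity of $T$ also follows directly from (1): $\langle \Gamma(f)\psi,\psi\rangle = \int_G f(x)\,\langle T\D\sigma(x)\psi,\D\sigma(x)\psi\rangle\dmr(x)\ge 0$ for every non-negative $f \in L^\infty(G)$ forces the integrand --- continuous in $x$ by the covariance relation \eqref{eq:duflo_moore_covariance_relation} for $\D$ and strong continuity of $\sigma$ --- to be non-negative, and evaluation at $x = 0_G$ gives $\langle T\D\psi,\D\psi\rangle\ge 0$ on the dense subspace $\operatorname{ran}(\D)$. For uniqueness, if $f \star_G \D T_1 \D = f \star_G \D T_2 \D$ for all $f \in L^\infty(G)$, then by Lemma \ref{lemma:quantization_admissible_weak_definition} the continuous function $x \mapsto \langle (T_1-T_2)\D\sigma(x)\psi,\D\sigma(x)\phi\rangle$ has vanishing integral against every $f$, hence vanishes identically; at $x = 0_G$ this gives $\langle (T_1-T_2)\D\psi,\D\phi\rangle = 0$ for all $\psi,\phi\in\operatorname{Dom}(\D)$, and since $\operatorname{ran}(\D) = \operatorname{Dom}(\D^{-1})$ is dense we get $T_1 = T_2$.
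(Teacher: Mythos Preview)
Your proposal is correct and, at its core, takes the same route as the paper: both outsource the key structural step---that a weak${}^*$-continuous, positive, unital, covariant map is necessarily of the form $f \mapsto f \star_G \D T \D$---to the result of Kiukas, Lahti and Ylinen \cite{Kiukas2006_2}. The paper's proof is extremely terse: it simply performs the involution $\Gamma \mapsto \Gamma_l$, $\Gamma_l(f) = \Gamma(\check f)$ with $\check f(x) = f(x^{-1})$, to pass between right- and left-Haar conventions, cites \cite{Kiukas2006_2}, and then uses the Duflo--Moore covariance relation \eqref{eq:duflo_moore_covariance_relation} to rewrite the output in the admissible form $\D T \D$.

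Your write-up is more expansive: you make the pre-adjoint $\Lambda : \mathcal{S}^1 \to L^1_r(G)$ explicit, translate the four hypotheses into properties of $\Lambda$, and give self-contained arguments for positivity of $T$ and for uniqueness. These additions are correct and add clarity, but they do not replace the black box---you yourself flag that the existence of $T$ with $\D T \D$ admissible is ``the main obstacle'' and that you would invoke \cite{Kiukas2006_2} for it. So the approaches coincide where it matters; yours simply unpacks more of the surrounding structure. One small remark: your approximate-identity sketch for extracting $T$ at $0_G$ is plausible but would need care with domains of $\D$, which is presumably why both you and the paper ultimately defer to the reference.
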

	\begin{remark}
	    That $\Gamma(f) = f \star_G \D T \D$ satisfies all of these properties can be verified directly.
	\end{remark}
	
	\begin{proof}
	    The proof is a matter of translating our situation to one described in a remark in \cite{Kiukas2006} which references \cite{Kiukas2006_2} for the full proof. This follows by first considering the bijection $\Gamma \mapsto \Gamma_l$ where $\Gamma_l(f) = \Gamma(\check{f})$ and $\check{f}(x) = f(x^{-1})$. To translate the result back into the form $f \star_G \D T \D$ we can use equation \eqref{eq:duflo_moore_covariance_relation} from Theorem \ref{thm:duflo_moore}.
	\end{proof}
	By Proposition \ref{prop:convolution_adjoint}, the adjoint of $\Gamma_S = \mathcal{A}_S$ is given by the map $T \mapsto T \star_G S$ and so covariant integral quantizations can be seen as inducing operator-operator convolutions as well.
	
	\begin{remark}
	Werner refers to mappings of the kind discussed above as \emph{positive correspondence rules} in \cite{Werner1984}.    
	\end{remark}
	
	\subsection{Operator wavelet transforms}\label{sec:operator_wavelet_transforms}
	In \cite{skrettingland2020equivalent_norms}, the STFT of an element of $L^2(\R^d)$ with respect to a Hilbert-Schmidt operator is defined and a generalization of Moyal's identity is proved which mirrors the Duflo-Moore theorem. Later in \cite{McNulty2022}, this approach was generalized by letting the STFT also act on operators in what they call the \emph{operator STFT}. In this section we generalize both of these constructions to the locally compact setting.

    \subsubsection{Wavelet transform with operator window}\label{sec:operator_window_wavelet_transform}
    
    \begin{definition}
        Let $S$ be a bounded operator on $\mathcal{H}$ such that $S^*S$ is admissible and $\psi \in \mathcal{H}$. Then the \emph{wavelet transform with operator window} $\mathfrak{W}_S(\psi)$ of $\psi$ with window to $S$ is defined as the function 
	$$
	\mathfrak{W}_S(\psi)(x) = S \sigma(x) \psi \in \mathcal{H}
	$$
        for $x \in G$.
    \end{definition}
    \begin{remark}
        Our definition differs from that in \cite{skrettingland2020equivalent_norms, McNulty2022} in that we don't take the adjoint of $\sigma$ to stay in line with the right Haar measure convention of this paper. Note also that the condition that $S^*S$ is admissible corresponds to $S$ being a Hilbert-Schmidt operator in the unimodular case.
    \end{remark}
	The wavelet transform with operator window should be considered as an element of the Hilbert space $L^2_r(G, \mathcal{H})$ of equivalence classes of elements $\Psi : G \mapsto \mathcal{H}$ such that
	$$
	\Vert \Psi \Vert_{L^2_r(G, \mathcal{H})} = \left( \int_G \Vert \Psi(x) \Vert^2_{\mathcal{H}} \dmr(x) \right)^{1/2} < \infty
	$$
	where $\Psi \sim \Phi$ if $\Psi(x) = \Phi(x)$ in $\mathcal{H}$ for $\mu_r$-a.e. $x \in G$. The inner product in this space is given by
	$$
	\langle \Psi, \Phi \rangle_{L^2_r(G, \mathcal{H})} = \int_G \big\langle \Psi(x), \Phi(x) \big\rangle_\mathcal{H} \dmr(x).
	$$
	We are now ready to prove the following orthogonality relation, which in particular shows that the  transform indeed is an element of $L^2_r(G, \mathcal{H})$.
	\begin{proposition}\label{prop:orthogonality_rel_op}
	    Let $S_1, S_2 \in B(\mathcal{H})$ be such that $S_2^* S_1$ is admissible and $\psi_1, \psi_2 \in \mathcal{H}$. Then 
	    $$
        \big\langle \mathfrak{W}_{S_1}\psi_1, \mathfrak{W}_{S_2}\psi_2 \big\rangle_{L^2_r(G, \mathcal{H})} = \langle \psi_1, \psi_2 \rangle \langle S_1 \D^{-1}, S_2 \D^{-1} \rangle_{\mathcal{S}^2}.
        $$
        In particular, $\mathfrak{W}_S \psi \in L^2_r(G, \mathcal{H})$ for $S \in B(\mathcal{H})$ such that $S^*S$ is admissible and $\psi \in \mathcal{H}$.
    \end{proposition}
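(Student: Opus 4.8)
The plan is to reduce the operator-window orthogonality relation to the rank-one Duflo-Moore relation \eqref{eq:duflo_ortho_right} by expanding the operators $S_1, S_2$ in suitable series and tracking the cross terms. First I would write out the left-hand side explicitly:
\begin{align*}
\big\langle \mathfrak{W}_{S_1}\psi_1, \mathfrak{W}_{S_2}\psi_2 \big\rangle_{L^2_r(G, \mathcal{H})} = \int_G \big\langle S_1 \sigma(x) \psi_1, S_2 \sigma(x) \psi_2 \big\rangle_\mathcal{H} \dmr(x) = \int_G \big\langle S_2^* S_1 \sigma(x) \psi_1, \sigma(x) \psi_2 \big\rangle_\mathcal{H} \dmr(x),
\end{align*}
which we recognize as $\int_G (\psi_1 \otimes \psi_2) \star_G (S_2^* S_1)(x) \dmr(x)$ via Lemma \ref{lemma:rank_one_convolved_with_operator}. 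Since $S_2^* S_1$ is admissible by hypothesis and $\psi_1 \otimes \psi_2 \in \mathcal{S}^1$, Corollary \ref{corollary:workhorse_alternative} applies directly and gives
\begin{align*}
\int_G (\psi_1 \otimes \psi_2) \star_G (S_2^* S_1)(x) \dmr(x) = \tr(\psi_1 \otimes \psi_2)\, \tr\big(\D^{-1} S_2^* S_1 \D^{-1}\big) = \langle \psi_1, \psi_2 \rangle\, \tr\big(\D^{-1} S_2^* S_1 \D^{-1}\big).
\end{align*}

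It then remains to identify $\tr(\D^{-1} S_2^* S_1 \D^{-1})$ with the Hilbert-Schmidt inner product $\langle S_1 \D^{-1}, S_2 \D^{-1} \rangle_{\mathcal{S}^2}$. By cyclicity of the trace and the definition $\langle A, B \rangle_{\mathcal{S}^2} = \tr(A B^*)$, we have $\langle S_1 \D^{-1}, S_2 \D^{-1} \rangle_{\mathcal{S}^2} = \tr\big( S_1 \D^{-1} (S_2 \D^{-1})^* \big) = \tr\big( S_1 \D^{-1} \D^{-1} S_2^* \big) = \tr\big( \D^{-1} S_2^* S_1 \D^{-1} \big)$, where the last step uses that $\D^{-1}$ is self-adjoint (Theorem \ref{thm:duflo_moore}) and cyclicity. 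The final claim that $\mathfrak{W}_S \psi \in L^2_r(G, \mathcal{H})$ for $S^*S$ admissible follows by specializing $S_1 = S_2 = S$ and $\psi_1 = \psi_2 = \psi$, since then the right-hand side equals $\Vert \psi \Vert^2 \Vert S \D^{-1} \Vert_{\mathcal{S}^2}^2 < \infty$.

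The main delicacy — rather than a genuine obstacle — is justifying that $\D^{-1} S_2^* S_1 \D^{-1}$, which a priori is only densely defined on $\operatorname{Dom}(\D^{-1})$, is actually trace-class so that Corollary \ref{corollary:workhorse_alternative} and the trace manipulations above are legitimate; this is exactly the content of the admissibility hypothesis on $S_2^* S_1$, so no extra work is needed beyond invoking the definition. A secondary point worth stating carefully is that the cyclicity identity $\tr(AB) = \tr(BA)$ and the rewriting $(S_2 \D^{-1})^* = \D^{-1} S_2^*$ both require that the relevant products lie in $\mathcal{S}^1$ and $\mathcal{S}^2$ respectively, which again is guaranteed once $S_2^* S_1$ (hence, by the rank-one characterization argument, also $S_1 \D^{-1}$ and $S_2 \D^{-1}$ individually when $S_1 = S_2$) is admissible; in the polarized case one can either appeal to the polarization identity in $\mathcal{S}^2$ or simply note that admissibility of $S_2^*S_1$ plus boundedness of $S_1, S_2$ makes each factor of the trace well defined.
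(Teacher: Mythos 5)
Your proposal is correct and follows essentially the same route as the paper's proof: rewrite the integrand as $(\psi_1 \otimes \psi_2) \star_G (S_2^* S_1)(x)$ via Lemma \ref{lemma:rank_one_convolved_with_operator}, integrate with Corollary \ref{corollary:workhorse_alternative}, and identify $\tr(\D^{-1} S_2^* S_1 \D^{-1})$ with $\langle S_1 \D^{-1}, S_2 \D^{-1}\rangle_{\mathcal{S}^2}$. The opening remark about expanding $S_1, S_2$ in series is never actually needed (and you indeed do not use it); otherwise the argument, including the specialization $S_1 = S_2 = S$ for the final claim, matches the paper.
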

    \begin{proof}
        By writing
        \begin{align*}
            \big\langle \mathfrak{W}_{S_1}\psi_1(x), \mathfrak{W}_{S_2}\psi_2(x) \big\rangle &= \big\langle S_1 \sigma(x) \psi_1, S_2 \sigma(x) \psi_2 \big\rangle\\
            &= \big\langle \sigma(x)^* S_2^*S_1 \sigma(x) \psi_1, \psi_2 \big\rangle\\
            &= ((\psi_1 \otimes \psi_2) \star_G S_2^*S_1)(x)
        \end{align*}
        using Lemma \ref{lemma:rank_one_convolved_with_operator}, we see that this can be integrated using Corollary \ref{corollary:workhorse_alternative} to yield
        \begin{align*}
            \big\langle \mathfrak{W}_{S_1}\psi_1, \mathfrak{W}_{S_2}\psi_2 \big\rangle_{L^2_r(G, \mathcal{H})} &= \tr(\psi_1 \otimes \psi_2) \tr(\D^{-1}S_2^* S_1\D^{-1})\\
            &=\langle \psi_1, \psi_2 \rangle \tr(S_1\D^{-1} (S_2 \D^{-1})^*)\\
            &=\langle \psi_1, \psi_2 \rangle \langle S_1 \D^{-1}, S_2 \D^{-1} \rangle_{\mathcal{S}^2}
        \end{align*}
        as desired.
    \end{proof}
	\begin{remark}
    	Note that when $S_1 = \xi \otimes \phi_1, S_2 = \xi \otimes \phi_2$ and $\xi$ is normalized, we recover the familiar Duflo-Moore relation
    	$$
    	\big\langle \mathfrak{W}_{S_1}\psi_1, \mathfrak{W}_{S_2}\psi_2\big\rangle = \langle \psi_1, \psi_2 \rangle \overline{\langle \D^{-1} \phi_1, \D^{-1} \phi_2 \rangle}.
    	$$
        Moreover, when
    	$$
    	S_1 = \sum_{n=1}^N \xi \otimes \phi_n,\qquad S_2 = \sum_{m=1}^M \xi \otimes \eta_m
    	$$
    	for some normalized $\xi$, $S_2^*S_1$ is admissible precisely when $\phi_n$ and $\eta_m$ are admissible for each $n,m$ since
    	$$
    	S_2^*S_1 = \sum_{n=1}^N \sum_{m=1}^M \eta_m \otimes \phi_n
    	$$
    	by Proposition \ref{prop:rank_one_admissibility_criteria} characterizing admissible rank-one operators.
	\end{remark}

        \subsubsection{Operator wavelet transform}
        \begin{definition}
	Let $S$ be a bounded operator on $\mathcal{H}$ such that $S^*S$ is admissible and $T \in \mathcal{HS}$, then the \emph{operator wavelet transform} of $T$ with respect to $S$ is defined as
	$$
	\mathfrak{W}_S T(x) = S \sigma(x) T.
	$$
	\end{definition}
	Note that $\mathfrak{W}_S T$ maps elements of $G$ to operators on $\mathcal{H}$. In fact, we will show that $\mathfrak{W}_S T$ is an element of the Hilbert space $L^2_r(G, \mathcal{HS})$ provided $S$ satisfies the same admissibility criterion as for the wavelet transform with operator window. The inner product in this space is given by
	$$
	\langle A, B \rangle_{L^2_r(G, \mathcal{HS})} = \int_G \langle A(x), B(x) \rangle_{\mathcal{HS}} \dmr(x).
	$$
    As in the wavelet transform with operator window case, we still have a version of Moyal's identity which generalizes \cite[Prop. 3.4]{McNulty2022}.
	\begin{proposition}\label{prop:orthognality_rel_op_op}
	    Let $S_1, S_2$ be such that $S_2^* S_1$ is admissible and $T, R \in \mathcal{HS}$. Then
	    $$
        \big\langle \mathfrak{W}_{S_1} T, \mathfrak{W}_{S_2}R \big\rangle_{L^2_r(G, \mathcal{HS})} = \langle T, R\rangle_{\mathcal{HS}}\langle S_1 \D^{-1}, S_2 \D^{-1} \rangle_{\mathcal{HS}}.
	    $$
	    In particular, $\mathfrak{W}_S T \in L^2_r(G, \mathcal{HS})$ for $S \in B(\mathcal{H})$ such that $S^*S$ is admissible and $T \in \mathcal{HS}$.
	\end{proposition}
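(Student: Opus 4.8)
The plan is to follow the proof of Proposition~\ref{prop:orthogonality_rel_op} almost verbatim, with the scalar pointwise inner product replaced by the Hilbert--Schmidt one and again identified with an operator-operator convolution. Fix $x \in G$. Since $T,R \in \mathcal{HS}$, the product $TR^*$ lies in $\mathcal{S}^1$, so every operator appearing below is trace-class and cyclicity of the trace is legitimate; I would compute
$$
\big\langle \mathfrak{W}_{S_1}T(x), \mathfrak{W}_{S_2}R(x)\big\rangle_{\mathcal{HS}} = \tr\big( S_1\sigma(x)T R^*\sigma(x)^* S_2^*\big) = \tr\big( (TR^*)\,\sigma(x)^*(S_2^*S_1)\sigma(x)\big) = (TR^*)\star_G(S_2^*S_1)(x).
$$

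Next I would integrate this identity over $G$ against $\mu_r$. Because $S_2^*S_1$ is admissible by hypothesis and $TR^* \in \mathcal{S}^1$, Corollary~\ref{corollary:workhorse_alternative} applies: it shows both that $x \mapsto (TR^*)\star_G(S_2^*S_1)(x)$ belongs to $L^1_r(G)$ — so that the bracket $\langle \mathfrak{W}_{S_1}T, \mathfrak{W}_{S_2}R\rangle_{L^2_r(G,\mathcal{HS})}$ is well-defined — and that its integral equals $\tr(TR^*)\tr(\D^{-1}(S_2^*S_1)\D^{-1})$. It then remains to rewrite the two factors as inner products: $\tr(TR^*) = \langle T, R\rangle_{\mathcal{HS}}$ directly, while using self-adjointness of $\D^{-1}$ (Theorem~\ref{thm:duflo_moore}) together with cyclicity,
$$
\tr\big(\D^{-1}(S_2^*S_1)\D^{-1}\big) = \tr\big((S_1\D^{-1})(S_2\D^{-1})^*\big) = \langle S_1\D^{-1}, S_2\D^{-1}\rangle_{\mathcal{HS}},
$$
which gives the stated formula. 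The ``in particular'' claim follows by taking $S_1 = S_2 = S$ and $T = R$, since then the right-hand side is $\Vert T\Vert_{\mathcal{HS}}^2\,\Vert S\D^{-1}\Vert_{\mathcal{HS}}^2$ and $\Vert S\D^{-1}\Vert_{\mathcal{HS}}^2 = \tr(\D^{-1}S^*S\D^{-1}) < \infty$ precisely because $S^*S$ is admissible.

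I do not anticipate a genuine difficulty: the argument is a routine chain of trace manipulations parallel to Proposition~\ref{prop:orthogonality_rel_op}. The only point deserving care is the very first step — verifying that all trace rearrangements are valid, which hinges on $TR^* \in \mathcal{S}^1$ — and, relatedly, checking the hypotheses of Corollary~\ref{corollary:workhorse_alternative} before invoking it; these facts simultaneously supply the integrability needed for the $L^2_r(G,\mathcal{HS})$-bracket to make sense in the first place.
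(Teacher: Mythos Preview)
Your proposal is correct and follows essentially the same route as the paper: identify the pointwise $\mathcal{HS}$-inner product with $(TR^*)\star_G(S_2^*S_1)(x)$ via trace cyclicity, integrate using Corollary~\ref{corollary:workhorse_alternative}, and then rewrite the resulting trace factors as the two $\mathcal{HS}$-inner products. If anything, you are slightly more careful than the paper in spelling out why $TR^*\in\mathcal{S}^1$ legitimizes the trace manipulations and in justifying the ``in particular'' clause.
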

	\begin{proof}
	    We compute
	    \begin{align*}
	        \big\langle \mathfrak{W}_{S_1} T, \mathfrak{W}_{S_2}R \big\rangle_{L^2_r(G, \mathcal{HS})} &= \int_G \big\langle \mathfrak{W}_{S_1} T(x), \mathfrak{W}_{S_2}R(x) \big\rangle_{\mathcal{HS}} \dmr(x) \\
	        &= \int_G \tr\big( S_1 \sigma(x) T R^* \sigma(x)^* S_2^* \big) \dmr(x)\\
            &= \int_G ((TR^*) \star_G (S_2^* S_1)) (x) \dmr(x)\\
            &= \tr(T R^*) \tr(\D^{-1} S_2^* S_1 \D^{-1})\\
            &= \langle T, R\rangle_{\mathcal{HS}}\langle S_1 \D^{-1}, S_2 \D^{-1} \rangle_{\mathcal{HS}}
	    \end{align*}
	    where we in the second to last step used Corollary \ref{corollary:workhorse_alternative}.
	\end{proof}

        \begin{remark}
            More structure such as the Toeplitz operators discussed in \cite{McNulty2022} carry over to the locally compact setting with similar modifications as in the above but in the interest of brevity we leave this be.
        \end{remark}

	\subsection{A Berezin-Lieb inequality}
	The Berezin-Lieb inequality as investigated in \cite{Werner1984, Luef2018_berezin, Klauder2012} can be seen as a generalization of Corollary \ref{corollary:workhorse_alternative}. We present here a generalization to locally compact groups with the proof partially based on a proof for the Weyl-Heisenberg case in \cite{Dorfler2021}.
	\begin{theorem}\label{theorem:berezin-lieb}
	    Fix a positive $T \in \mathcal{S}^1$ and let $S \in \mathcal{S}^1$ be admissible. If $\Phi$ is a non-negative, convex and continuous function on a domain containing the spectrum of $\tr(S) T$ and the range of $T \star_G S$, then
	    $$
	    \int_G \Phi \circ (T \star_G S)(x) \dmr(x) \leq \tr\big(\Phi(\tr(S) T\big) \frac{\tr(\D^{-1} S \D^{-1})}{\tr(S)}
	    $$
	    where $\Phi(S)$ is defined by the functional calculus. Similarly, if $S \in \mathcal{S}^1$ is positive and admissible, $f \in L^\infty(G)$ is non-negative and $\Phi$ is a non-negative, convex and continuous function on a domain containing the spectrum of $f \star_G S$ and the range of $\tr(\D^{-1} S \D^{-1}) f$, then
	    $$
	    \tr(\Phi(f \star_G S)) \leq \frac{\tr(S)}{\tr(\D^{-1} S \D^{-1})} \int_G \Phi\big(\tr(\D^{-1} S \D^{-1}) f(x)\big)  \dmr(x).
	    $$
	\end{theorem}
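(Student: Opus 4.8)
The plan is to deduce both inequalities from Jensen's inequality, using the quantum harmonic analysis identities of Section~\ref{sec:admissibility} to evaluate the averages that appear. Throughout write $c = \tr(S) > 0$ and $d = \tr(\D^{-1}S\D^{-1}) > 0$.

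For the first inequality I would observe that for each $x \in G$ the operator $\rho_x := c^{-1}\,\sigma(x)^* S \sigma(x)$ is a density operator (positive, unit trace), so that by the definition of the operator-operator convolution $T \star_G S(x) = \tr\big((cT)\rho_x\big)$. The pointwise ingredient is the elementary operator Jensen inequality: for a positive trace-class operator $A$, a density operator $\rho$, and a non-negative convex continuous function $\Phi$ on an interval containing the spectrum of $A$, one has $\Phi\big(\tr(A\rho)\big) \le \tr\big(\Phi(A)\rho\big)$; this is proved by diagonalizing $A = \sum_j a_j (e_j \otimes e_j)$ and $\rho = \sum_k p_k (\psi_k \otimes \psi_k)$ and applying scalar Jensen twice, first to $\langle A\psi_k,\psi_k\rangle = \sum_j a_j|\langle\psi_k,e_j\rangle|^2$ and then to $\tr(A\rho) = \sum_k p_k\langle A\psi_k,\psi_k\rangle$, the countable convex combinations being legitimate because $\Phi \ge 0$. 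Taking $A = cT$ and $\rho = \rho_x$ and integrating against $\mu_r$ then gives
$$
\int_G \Phi \circ (T \star_G S)(x)\dmr(x) \;\le\; \frac{1}{c}\int_G \tr\big(\Phi(cT)\,\sigma(x)^* S \sigma(x)\big)\dmr(x) \;=\; \frac{1}{c}\int_G \big(\Phi(cT)\big)\star_G S(x)\dmr(x).
$$
If $\tr(\Phi(cT)) = \infty$ the claim is trivial; otherwise $\Phi(cT)\in\mathcal{S}^1$, and Corollary~\ref{corollary:workhorse_alternative} with window $S$ evaluates the last integral as $\tr\big(\Phi(cT)\big)\,d$, which after dividing by $c$ is the right-hand side. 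All interchanges of sum and integral are justified by Tonelli's theorem since the integrands are non-negative.

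For the second inequality I would first note that $f \star_G S$ is a bounded positive operator (Lemma~\ref{lemma:quantization_admissible_weak_definition} and the non-negativity of $f$ and $S$), and that in the cases of interest ($f\in L^1_r(G)$ or $f\in L^0(G)$) it is compact by Corollary~\ref{corollary:looser_conditions_on_func-op_compactness}, so it has a spectral decomposition $f\star_G S = \sum_k \mu_k (h_k\otimes h_k)$ with $\mu_k\ge0$ and $\{h_k\}$ an orthonormal basis. By Lemma~\ref{lemma:rank_one_convolved_with_operator}, $\mu_k = \langle (f\star_G S)h_k, h_k\rangle = \int_G f(x)\,(h_k\otimes h_k)\star_G S(x)\dmr(x)$, and since the non-negative function $(h_k\otimes h_k)\star_G S$ has total $\mu_r$-mass $d$ by Corollary~\ref{corollary:workhorse_alternative}, the measure $\tfrac1d (h_k\otimes h_k)\star_G S(x)\dmr(x)$ is a probability measure on $G$ against which $\mu_k$ is the expectation of $df$. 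Scalar Jensen then yields $\Phi(\mu_k) \le \tfrac1d\int_G \Phi\big(df(x)\big)(h_k\otimes h_k)\star_G S(x)\dmr(x)$, and summing over $k$ while invoking Tonelli and Lemma~\ref{lemma:sum_op_conv_rank_one}, which gives $\sum_k (h_k\otimes h_k)\star_G S(x) = \tr(S) = c$, we obtain
$$
\tr\big(\Phi(f\star_G S)\big) = \sum_k \Phi(\mu_k) \;\le\; \frac1d\int_G \Phi\big(df(x)\big)\Big(\sum_k (h_k\otimes h_k)\star_G S(x)\Big)\dmr(x) \;=\; \frac{c}{d}\int_G \Phi\big(df(x)\big)\dmr(x),
$$
which is the stated bound; when $f\star_G S$ is not compact the same computation runs with the spectral measure in place of the eigenvalue list.

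I do not expect a genuine obstacle here --- the argument is Jensen's inequality in two dual guises --- so the points that require care are bookkeeping: (i) placing the normalizing constants correctly, i.e.\ applying $\Phi$ to $\tr(S)\,T$ against the unit-trace state $\rho_x$ in the first part and to the multiplier $\tr(\D^{-1}S\D^{-1})f$ against the probability measures $\tfrac1d (h_k\otimes h_k)\star_G S(x)\dmr(x)$ in the second; and (ii) the trace-class and Fubini--Tonelli checks needed to apply Corollary~\ref{corollary:workhorse_alternative} and Lemma~\ref{lemma:sum_op_conv_rank_one}, together with reading both inequalities in $[0,\infty]$ so that the cases where a symbol fails to be trace-class or where $\Phi(0)>0$ (which forces both sides to be infinite) are automatically subsumed.
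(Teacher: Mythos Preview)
Your proof is correct and follows essentially the same route as the paper: both parts reduce to Jensen's inequality with the appropriate normalization, then invoke Corollary~\ref{corollary:workhorse_alternative} and Lemma~\ref{lemma:sum_op_conv_rank_one} to evaluate the resulting integrals and sums. Your packaging of the first step as the operator inequality $\Phi(\tr(A\rho)) \le \tr(\Phi(A)\rho)$ is exactly the paper's diagonalize-$T$-and-apply-Jensen argument in compressed form, and your caveat about compactness of $f \star_G S$ for general $f \in L^\infty(G)$ is a point the paper glosses over.
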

	\begin{proof}
	    Expanding $T$ in its singular value decomposition and using Lemma \ref{lemma:rank_one_convolved_with_operator}, we obtain
	    \begin{align*}
	        (T \star_G S) (x) &= \sum_n \lambda_n ((\xi_n \otimes \xi_n) \star_G S)(x)\\
	        &= \sum_n \lambda_n \langle S \sigma(x) \xi_n, \sigma(x) \xi_n \rangle\\
	        &= \sum_n \tr(S)\lambda_n \frac{\langle S \sigma(x) \xi_n, \sigma(x) \xi_n \rangle}{\tr(S)}.
	    \end{align*}
	    Since $\{ \xi_n \}_n$ is an orthonormal basis, so is $\{ \sigma(x) \xi_n \}_n$ and so for each $x$ we can view the above as the integral over $(\tr(S)\lambda_n)_n$ with respect to a discrete probability measure. Applying Jensen's inequality, we find that
	    \begin{align*}
	        \Phi \circ (T \star_G S)(x) &= \Phi\left( \sum_n \tr(S)\lambda_n \frac{\langle S \sigma(x) \xi_n, \sigma(x) \xi_n \rangle}{\tr(S)} \right)\\
	        &\leq \sum_n \Phi(\tr(S)\lambda_n) \frac{1}{\tr(S)}\langle S \sigma(x) \xi_n, \sigma(x) \xi_n \rangle
	    \end{align*}
	    for each $x$. Integrating both sides yields
	    \begin{align*}
	        \int_G \Phi \circ (T \star_G S)(x) \dmr(x) &\leq \int_G \sum_n \Phi(\tr(S)\lambda_n) \frac{1}{\tr(S)}\langle S \sigma(x) \xi_n, \sigma(x) \xi_n \rangle\\
	        &= \sum_n \Phi(\tr(S) \lambda_n) \frac{1}{\tr(S)} \int_G (\xi_n \otimes \xi_n) \star_G S(x) \dmr(x)\\
	        &= \sum_n \Phi(\tr(S) \lambda_n) \frac{\tr(\D^{-1} S \D^{-1})}{\tr(S)}
	    \end{align*}
	    where we used Tonelli, Lemma \ref{lemma:rank_one_convolved_with_operator} and Corollary \ref{corollary:workhorse_alternative}.
	    
	    For the function statement, we use that $f \star_G S$ is a positive operator by Lemma \ref{lemma:pos_pos_func_op_conv} to expand $f \star_G S$ as $f \star_G S = \sum_n \lambda_n \xi_n \otimes \xi_n$ which yields
	    \begin{align*}
	        \Phi(f \star_G S) = \sum_n \Phi(\lambda_n) \xi_n \otimes \xi_n.
	    \end{align*}
	    Now before taking the trace of this, note that by the above,
	    $$
	    \big\langle \Phi(f \star_G S) \xi_n, \xi_n \big\rangle = \Phi(\lambda_n) = \Phi\big(\langle (f \star_G S) \xi_n, \xi_n \rangle\big)
	    $$
	    and so
	    \begin{align*}
	        \tr(\Phi(f \star_G S)) &= \sum_n \big\langle \Phi(f \star_G S)\xi_n, \xi_n \big\rangle\\
	        &= \sum_n \Phi\big(\langle (f \star_G S) \xi_n, \xi_n \rangle\big)\\
	        &= \sum_n \Phi\left( \int_G f(x) \langle S \sigma(x) \xi_n, \sigma(x)\xi_n \rangle \dmr(x)\right)\\
	        &= \sum_n \Phi\left( \int_G \tr(\D^{-1} S \D^{-1})f(x) \frac{\langle S \sigma(x) \xi_n, \sigma(x)\xi_n \rangle}{\tr(\D^{-1} S \D^{-1})} \dmr(x)\right).
	    \end{align*}
	    Now for each $n$, $\frac{\langle S \sigma(x) \xi_n, \sigma(x)\xi_n \rangle}{\tr(\D^{-1} S \D^{-1})} \dmr(x)$ can be viewed as a probability measure since the inner product $\langle S \sigma(x) \xi_n, \sigma(x)\xi_n \rangle$ integrates to $\tr(\D^{-1} S \D^{-1})$ by Lemma \ref{lemma:rank_one_convolved_with_operator} and Corollary \ref{corollary:workhorse_alternative}. We can therefore apply Jensen's inequality to get
	    \begin{align*}
	        \tr(\Phi(f \star_G S)) &\leq \sum_n \int_G \Phi\big(\tr(\D^{-1} S \D^{-1}) f(x)\big) \frac{\langle S \sigma(x) \xi_n, \sigma(x)\xi_n \rangle}{\tr(\D^{-1} S \D^{-1})} \dmr(x)\\
	        &= \int_G \Phi\big( \tr(\D^{-1} S \D^{-1}) f(x)\big) \frac{\sum_n\langle S \sigma(x) \xi_n, \sigma(x)\xi_n \rangle}{\tr(\D^{-1} S \D^{-1})} \dmr(x)\\
	        &= \frac{\tr(S)}{\tr(\D^{-1} S \D^{-1})} \int_G \Phi\big(\tr(\D^{-1} S \D^{-1}) f(x)\big) \dmr(x)
	    \end{align*}
	    where we used Tonelli to change the order of integration and summation.
	\end{proof}
	\begin{remark}
	A result similar to the rank-one case of the second inequality in the above theorem can be found in \cite[Thm. 14.11]{Wong2002}.
	\end{remark}
	
	\subsection{Wiener's Tauberian theorem}
	In this section we extend a theorem originally proved in \cite{Kiukas2012} which can also be found in \cite{Luef2018}. It can be viewed as a descendant to Wiener's classical Tauberian theorem \cite{Wiener1932} and is similar in spirit to the ideal formulation in \cite{Leptin1976}. To state it, we first need to introduce the concept of regular functions and operators.
	\begin{definition}
	A function $g \in L^p_r(G)$ is said to be $p$-regular if
	$$
	\overline{\operatorname{span}\big\{ g(\cdot\, x^{-1}) \big\}_{x \in G}} = L^p_r(G).
	$$
	Similarly, an operator $S \in \mathcal{S}^p$ is said to be $p$-regular if 
	$$
	\overline{\operatorname{span}\big\{ \sigma(x)^* S\sigma(x) \big\}_{x \in G}} = \mathcal{S}^p.
	$$
	\end{definition}
	We now state the theorem before a short discussion.
	\begin{theorem}
	    Assume that there exists an admissible operator $R \in \mathcal{S}^1$ such that $R \star_G R$ is regular, let $S \in \mathcal{S}^p$ be admissible, $1 \leq p \leq \infty$ and let $q$ be the conjugate exponent of $p$. Then the following are equivalent:
	    \begin{enumerate}
    	    \item \label{item:regularity_1} $S$ is $p$-regular,
    	    \item \label{item:regularity_2} If $f \in L^q_r(G)$ and $f \star_G S = 0$, then $f = 0$,
    	    \item \label{item:regularity_3} $\mathcal{S}^p \star_G S$ is dense in $L^p_r(G)$,
    	    \item \label{item:regularity_4} If $T \in \mathcal{S}^q$ and $T \star_G S = 0$, then $T = 0$,
    	    \item \label{item:regularity_5} $L^p_r(G) \star_G S$ is dense in $\mathcal{S}^p$,
    	    \item \label{item:regularity_6} $S \star_G S$ is $p$-regular,
    	    \item \label{item:regularity_7} For any regular $T \in \mathcal{S}^1, T \star_G S$ is $p$-regular. 
    	\end{enumerate}
	\end{theorem}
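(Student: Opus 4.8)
The plan is to split the seven conditions into two clusters, establish the equivalences inside each cluster by soft functional analysis, and then bridge the clusters using the Tauberian hypothesis on $R$; the scheme is that of \cite{Kiukas2012} and \cite{Luef2018}. The tools are the bounded maps $\mathcal{A}_S\colon L^p_r(G)\to\mathcal{S}^p$, $f\mapsto f\star_G S$ and $\mathcal{B}_S\colon\mathcal{S}^p\to L^p_r(G)$, $T\mapsto T\star_G S$ of Proposition~\ref{prop:convolution_adjoint} together with the adjoint relations $\mathcal{A}_S^*=\mathcal{B}_S$, $\mathcal{B}_S^*=\mathcal{A}_S$ proved there; the pairing identity $\langle\sigma(x)^*S\sigma(x),T\rangle_{\mathcal{S}^p,\mathcal{S}^q}=(T\star_G S)(x)$ and the translation identity $(T\star_G S)(yx^{-1})=\big((\sigma(x)^*T\sigma(x))\star_G S\big)(y)$, both immediate from the definitions and trace cyclicity; and the associativity of Proposition~\ref{prop:compatibility_relations}, used in the admissible range where it extends by density via the interpolation estimates of Section~\ref{sec:convol_mapping_properties}. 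For $p=\infty$ every ``closed span'' and annihilator below is read in the appropriate weak-$*$ topology, with $\mathcal{S}^\infty=(\mathcal{S}^1)^*$ and $L^\infty(G)=(L^1_r(G))^*$, and the $p=\infty$ statements follow from the $p=1$ ones by the adjoint relations; it is convenient to settle $p=1$ first and obtain general $p$ using density of $\mathcal{S}^1$ in $\mathcal{S}^p$ and complex interpolation.

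\emph{First cluster: \ref{item:regularity_1}, \ref{item:regularity_4}, \ref{item:regularity_5}.} By Hahn--Banach, $\overline{\operatorname{span}\{\sigma(x)^*S\sigma(x)\}_{x\in G}}\neq\mathcal{S}^p$ precisely when some nonzero $T\in\mathcal{S}^q$ annihilates all $\sigma(x)^*S\sigma(x)$, which by the pairing identity means $T\star_G S\equiv0$; hence \ref{item:regularity_1}$\Leftrightarrow$\ref{item:regularity_4}. Since $L^p_r(G)\star_G S=\operatorname{ran}\mathcal{A}_S$, this range is dense in $\mathcal{S}^p$ iff $\mathcal{A}_S^*=\mathcal{B}_S\colon\mathcal{S}^q\to L^q_r(G)$ is injective, which is \ref{item:regularity_4}; hence \ref{item:regularity_4}$\Leftrightarrow$\ref{item:regularity_5}.

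\emph{Second cluster: \ref{item:regularity_2}, \ref{item:regularity_3}, \ref{item:regularity_6}, \ref{item:regularity_7}.} Dually, $\mathcal{S}^p\star_G S=\operatorname{ran}\mathcal{B}_S$ is dense in $L^p_r(G)$ iff $\mathcal{B}_S^*=\mathcal{A}_S\colon L^q_r(G)\to\mathcal{S}^q$ is injective, i.e.\ iff \ref{item:regularity_2} holds; so \ref{item:regularity_2}$\Leftrightarrow$\ref{item:regularity_3}. Next, \ref{item:regularity_6}$\Rightarrow$\ref{item:regularity_3} because the right translates $(S\star_G S)(\cdot\,x^{-1})=(\sigma(x)^*S\sigma(x))\star_G S$ all lie in $\operatorname{ran}\mathcal{B}_S$; and \ref{item:regularity_7}$\Leftrightarrow$\ref{item:regularity_3} because for a $1$-regular $T\in\mathcal{S}^1$ the translates of $T\star_G S$ are $\mathcal{B}_S(\sigma(x)^*T\sigma(x))$, whose closed span equals $\overline{\mathcal{B}_S(\mathcal{S}^p)}=\overline{\operatorname{ran}\mathcal{B}_S}$ since $\mathcal{S}^1$ is dense in $\mathcal{S}^p$ (the hypothesis provides such a $T=R$, so \ref{item:regularity_7} is not vacuous). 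The converse \ref{item:regularity_3}$\Rightarrow$\ref{item:regularity_6} comes for free once the clusters are joined: then \ref{item:regularity_3} gives \ref{item:regularity_1}, and \ref{item:regularity_1} makes $\{\sigma(x)^*S\sigma(x)\}$ total, so the translates of $S\star_G S$ already span $\operatorname{ran}\mathcal{B}_S$.

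\emph{The bridge between the clusters.} This is where the hypothesis on $R$ enters and is the heart of the argument: applying the already-established implications \ref{item:regularity_6}$\Rightarrow$\ref{item:regularity_3}$\Leftrightarrow$\ref{item:regularity_2} to $R$ in place of $S$, the regularity of $R\star_G R$ gives that $\mathcal{S}^1\star_G R$ is dense in $L^1_r(G)$ and that no nonzero function of $L^\infty(G)$ convolves to $0$ with $R$; $R$ thus plays the part of a function with nowhere-vanishing Fourier transform. To prove, say, \ref{item:regularity_1}$\Rightarrow$\ref{item:regularity_3} I would apply $\mathcal{B}_R$ to the total family $\{\sigma(x)^*S\sigma(x)\}$ to make the translates of $S\star_G R$ total in $L^p_r(G)$, and then convert this into density of $\operatorname{ran}\mathcal{B}_S$ using the associativity identities $(f\star_G T)\star_G S=f*_G(T\star_G S)$ and $T\star_G(f\star_G S)(y)=\int_G f(x)(T\star_G S)(xy)\dmr(x)$ (the latter from the proof of Proposition~\ref{prop:function_operator_convolution_of_admissible_is_admissible}), the point being that pairing against the $R$-translates lets one pass between the operator and the function sides; for the reverse \ref{item:regularity_3}$\Rightarrow$\ref{item:regularity_1} I would argue contrapositively, turning a nonzero $T\in\mathcal{S}^q$ with $T\star_G S\equiv0$ into a nonzero $g\in L^q_r(G)$ with $g\star_G S=0$ by means of the operators $\sigma(x)^*R\sigma(x)$ and the translation identity, contradicting \ref{item:regularity_2}. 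The main obstacle is carrying out these transfers while keeping track of the different $L^p$ and $\mathcal{S}^p$ scales and the admissibility bookkeeping---each operator-operator convolution requires a trace-class left factor and each function-operator convolution must land in the prescribed space, which is exactly what the interpolation results of Section~\ref{sec:convol_mapping_properties} are for---and one classical input is also needed, namely that a $1$-regular element of $L^1_r(G)$ is $p$-regular for all $p$, a form of Wiener's Tauberian theorem on $G$ in the spirit of \cite{Leptin1976}. Assembling the two cluster-internal chains with this bridge then establishes the equivalence of all seven conditions.
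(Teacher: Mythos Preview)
Your two-cluster organization via Hahn--Banach and the adjoint relation $\mathcal{A}_S^*=\mathcal{B}_S$ is exactly the paper's strategy, and your treatment of the within-cluster equivalences \ref{item:regularity_1}$\Leftrightarrow$\ref{item:regularity_4}$\Leftrightarrow$\ref{item:regularity_5} and \ref{item:regularity_2}$\Leftrightarrow$\ref{item:regularity_3} matches it verbatim. One simplification you missed: the paper's bridge \ref{item:regularity_2}$\Rightarrow$\ref{item:regularity_4} does \emph{not} use $R$ at all. If $T\star_G S\equiv 0$, then for every $A\in\mathcal{S}^1$ associativity gives $(A\star_G T)\star_G S=0$, so \ref{item:regularity_2} forces $A\star_G T\equiv 0$; evaluating at $0_G$ yields $\tr(AT)=0$ for all $A\in\mathcal{S}^1$, whence $T=0$. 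Only the reverse direction \ref{item:regularity_4}$\Rightarrow$\ref{item:regularity_2} needs the regular $R$, and there your sketch (produce from $f$ a function $g=R\star_G(f\star_G R)$-type object and use density of $\mathcal{S}^1\star_G R$) is the right idea.

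One point to be careful with: you invoke as ``classical input'' that a $1$-regular element of $L^1_r(G)$ is $p$-regular for all $p$. This is a Wiener--Tauberian statement that is \emph{not} automatic for general locally compact groups (it can fail without amenability or a Wiener property in the sense of \cite{Leptin1976}), and the paper does not invoke it---the hypothesis that $R\star_G R$ is regular is used directly at the operator level, and the transfer to $L^p_r(G)$ goes through the operator maps $\mathcal{A}_R,\mathcal{B}_R$ and Proposition~\ref{prop:interpolated_schatten_memberships} rather than through a function-space Tauberian theorem. Your argument can be repaired by routing the bridge entirely through operator convolutions as above, which also sidesteps the admissibility bookkeeping you flagged.
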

	Since for $1 \leq p \leq p' \leq \infty$, we have the inclusion $\mathcal{S}^p \subset \mathcal{S}^{p'}$, it suffices to find an admissible 1-regular operator to establish existence of $p$-regular operators for any $p \geq 1$. In the Weyl-Heisenberg case, the operator $S = \varphi_0 \otimes \varphi_0$ where $\varphi_0$ is the standard Gaussian is a regular operator as is verified directly in \cite{Luef2018}. It is not as easy to find such an operator in the locally compact case as the closest we have is the indicator function on a compact neighborhood of the origin for which the proof method does not translate.
	
	Apart from the existence of a regular operator, the proof in the Weyl-Heisenberg case from \cite{Luef2018} carries over with minimal modifications to account for the use of the right Haar measure as opposed to the left Haar measure, the change of the underlying group and the requirement of admissibility. We show three implications in detail, the first of which is notable because it requires Proposition \ref{prop:convolution_adjoint} and motivates the requirement for $S$ to be admissible, and leave the remaining required modifications from the proof in \cite{Luef2018} to the reader.
	\begin{proof}
	    \eqref{item:regularity_2}$\iff$\eqref{item:regularity_3}: By Proposition \ref{prop:convolution_adjoint}, the mapping $\mathcal{A}_S : f \mapsto f \star S$ is adjoint to $\mathcal{B}_S : T \mapsto T \star_G S$ and by \cite[Thm. 4.12]{rudin1991functional}, denseness of the image of a mapping is equivalent to injectiveness of its adjoint.
	    
	    ~\\\eqref{item:regularity_4}$\iff$\eqref{item:regularity_5}: This follows by the same argument as the above but with the roles of $\mathcal{A}_S$ and $\mathcal{B}_S$ reversed.
	    
	    ~\\\eqref{item:regularity_2}$\implies$\eqref{item:regularity_4}: Assume that $T \star_G S$ = 0 for some $T \in \mathcal{S}^q$, then $A \star_G T \star_G S = 0$ for any $A \in \mathcal{S}^1$ which implies that $A \star T = 0$ for all $A\in \mathcal{S}^1$ by \eqref{item:regularity_2}. In particular,
	    $$
	    A \star_G T (0) = \tr(AT) = \langle A, T^*\rangle = 0\quad \text{ for all }A \in \mathcal{S}^1
	    $$
	    and hence $T = 0$ as desired.
	    
	    Again, the remainder of the implications follow with similar small modifications from \cite{Luef2018} which we leave to the interested reader.
	\end{proof}
	
	\section*{Acknowledgements}
	The author wishes to extend their gratitude to Eirik Berge and Stine Marie Berge for helpful discussions and to Franz Luef for reading drafts of the paper as well as guidance.
	

	\printbibliography

@Book{Folland,
 author = {Folland, G. B.},
 title = {A Course in Abstract Harmonic Analysis},
 publisher = {CRC Press},
 year = {1995},
 isbn = {9780849384905}
}

@book{daubechiesTen,
	doi = {10.1137/1.9781611970104},
	year = {1992},
	publisher = {Society for Industrial and Applied Mathematics},
	author = {I. Daubechies},
	title = {Ten Lectures on Wavelets}
}

@Book{fuhr05,
    author = {H. Führ},
    title = {Abstract Harmonic Analysis of Continuous Wavelet Transforms},
    publisher = {Springer},
    year = {2005},
    address   = "Berlin, Germany",
    doi = {10.1007/b104912},
    series    = "Lecture Notes in Mathematics",
}

@book{grochenig_book,
	doi = {10.1007/978-1-4612-0003-1},
	year = {2001},
	publisher = {Birkh\"{a}user Boston},
	author = {K. Gr\"{o}chenig},
	title = {Foundations of Time-Frequency Analysis}
}

@inproceedings{guo2006,
	title={Sparse Multidimensional Representations using Anisotropic Dilation and Shear Operators},
	author={K. Guo and G. Kutyniok and D. Labate},
	year={2006},
	pages = {189-201},
	booktitle = {Wavelets and Splines},
	venue = {Athens, GA},
	publisher = {Nashboro Press}
}

@article{kumar2021trace,
  title={Trace class and {H}ilbert-{S}chmidt pseudo differential operators on step two nilpotent {L}ie groups},
  author={Kumar, Vishvesh and Mondal, Shyam Swarup},
  journal={Bulletin des Sciences Math{\'e}matiques},
  volume={171},
  pages={103015},
  year={2021},
  publisher={Elsevier}
}

@article{Luef2021,
  doi = {10.1016/j.jfa.2020.108883},
  year = {2021},
  publisher = {Elsevier {BV}},
  volume = {280},
  number = {6},
  pages = {108883},
  author = {Franz Luef and Eirik Skrettingland},
  title = {A {Wiener} {Tauberian} theorem for operators and functions},
  journal = {Journal of Functional Analysis}
}

@article{Luef2018,
  doi = {10.1016/j.matpur.2017.12.004},
  year = {2018},
  publisher = {Elsevier {BV}},
  volume = {118},
  pages = {288--316},
  author = {Franz Luef and Eirik Skrettingland},
  title = {Convolutions for localization operators},
  journal = {Journal de Math{\'{e}}matiques Pures et Appliqu{\'{e}}es}
}

@article{Luef2019,
  doi = {10.1007/s00041-019-09663-3},
  year = {2019},
  publisher = {Springer Science and Business Media {LLC}},
  volume = {25},
  number = {4},
  pages = {2064--2108},
  author = {Franz Luef and Eirik Skrettingland},
  title = {Mixed-{State} {Localization} {Operators}: {Cohen's} {Class} and {Trace} {Class} {Operators}},
  journal = {Journal of Fourier Analysis and Applications}
}

@article{Kiukas2006_2,
  doi = {10.1016/j.jmaa.2005.06.069},
  year = {2006},
  publisher = {Elsevier {BV}},
  volume = {319},
  number = {2},
  pages = {783--801},
  author = {Jukka Kiukas and Pekka Lahti and Kari Ylinen},
  title = {Normal covariant quantization maps},
  journal = {Journal of Mathematical Analysis and Applications}
}

@article{Kiukas2006,
  doi = {10.1016/j.jmaa.2005.12.016},
  year = {2006},
  publisher = {Elsevier {BV}},
  volume = {324},
  number = {1},
  pages = {491--503},
  author = {J. Kiukas},
  title = {Covariant observables on a nonunimodular group},
  journal = {Journal of Mathematical Analysis and Applications}
}

@article{Berge2022,
  doi = {10.1016/j.jfa.2021.109327},
  year = {2022},
  publisher = {Elsevier {BV}},
  volume = {282},
  number = {4},
  pages = {109327},
  author = {Eirik Berge and Stine Marie Berge and Franz Luef and Eirik Skrettingland},
  title = {Affine quantum harmonic analysis},
  journal = {Journal of Functional Analysis}
}

@article{Berge2021_rigid,
  doi = {10.1007/s11868-021-00386-y},
  year = {2021},
  publisher = {Springer Science and Business Media {LLC}},
  volume = {12},
  number = {1},
  author = {Eirik Berge},
  title = {Interpolation in wavelet spaces and the {HRT}-conjecture},
  journal = {Journal of Pseudo-Differential Operators and Applications}
}

@article{Duflo1976,
  doi = {10.1016/0022-1236(76)90079-3},
  year = {1976},
  publisher = {Elsevier {BV}},
  volume = {21},
  number = {2},
  pages = {209--243},
  author = {M Duflo and Calvin C Moore},
  title = {On the regular representation of a nonunimodular locally compact group},
  journal = {Journal of Functional Analysis}
}

@book{Hytnen2016,
  doi = {10.1007/978-3-319-48520-1},
  year = {2016},
  publisher = {Springer International Publishing},
  author = {Tuomas Hyt\"{o}nen and Jan van Neerven and Mark Veraar and Lutz Weis},
  title = {Analysis in Banach Spaces}
}

@Book{Simon2005,
 author = {Simon, Barry},
 title = {Trace Ideals and Their Applications},
 publisher = {American Mathematical Society},
 year = {2005},
 address = {Providence, R.I},
 isbn = {978-0-8218-4988-0}
 }

@book{lax2002,
    title={Functional Analysis},
    author={Lax, Peter D.},
    publisher={Wiley-Interscience},
    year={2002},
    pages={608},
    isbn={978-0471556046},
    isbn10={0471556041}
}

@misc{skrettingland2020equivalent_norms,
Author = {E. Skrettingland},
Title = {Equivalent norms for modulation spaces from positive {Cohen's} class distributions},
Year = {2020},
doi = {10.48550/arXiv.2006.03245}
}

@article{Luef2019_acc,
  doi = {10.1007/s00365-019-09465-2},
  year = {2019},
  publisher = {Springer Science and Business Media {LLC}},
  volume = {52},
  number = {1},
  pages = {31--64},
  author = {Franz Luef and Eirik Skrettingland},
  title = {On {Accumulated} {Cohen's} {Class} {Distributions} and {Mixed}-{State} {Localization} {Operators}},
  journal = {Constructive Approximation}
}

@article{Abreu2015,
  doi = {10.1090/tran/6517},
  year = {2015},
  publisher = {American Mathematical Society ({AMS})},
  volume = {368},
  number = {5},
  pages = {3629--3649},
  author = {Lu{\'{\i}}s Daniel Abreu and Karlheinz Gr\"{o}chenig and Jos{\'{e}} Luis Romero},
  title = {On accumulated spectrograms},
  journal = {Transactions of the American Mathematical Society}
}

@article{Luef2018_berezin,
  doi = {10.1063/1.5023241},
  year = {2018},
  publisher = {{AIP} Publishing},
  volume = {59},
  number = {2},
  pages = {023502},
  author = {Franz Luef and Eirik Skrettingland},
  title = {Convolutions for {Berezin} quantization and {Berezin}-{Lieb} inequalities},
  journal = {Journal of Mathematical Physics}
}

@article{Werner1984,
  doi = {10.1063/1.526310},
  year = {1984},
  publisher = {{AIP} Publishing},
  volume = {25},
  number = {5},
  pages = {1404--1411},
  author = {R. Werner},
  title = {Quantum harmonic analysis on phase space},
  journal = {Journal of Mathematical Physics}
}

@misc{Dorfler2021,
Author = {Monika Dörfler and Franz Luef and Eirik Skrettingland},
Title = {Local Structure and Effective Dimensionality of Time Series Data Sets},
Year = {2021},
doi = {10.48550/arXiv.2111.02153},
}

@article{Wigner1932,
  doi = {10.1103/physrev.40.749},
  year = {1932},
  publisher = {American Physical Society ({APS})},
  volume = {40},
  number = {5},
  pages = {749--759},
  author = {E. Wigner},
  title = {On the Quantum Correction For Thermodynamic Equilibrium},
  journal = {Physical Review}
}

@article{Cohen1989,
  doi = {10.1109/5.30749},
  year = {1989},
  publisher = {Institute of Electrical and Electronics Engineers ({IEEE})},
  volume = {77},
  number = {7},
  pages = {941--981},
  author = {L. Cohen},
  title = {Time-frequency distributions-a review},
  journal = {Proceedings of the {IEEE}}
}

@article{Kiukas2012,
  doi = {10.1063/1.4754278},
  year = {2012},
  publisher = {{AIP} Publishing},
  volume = {53},
  number = {10},
  pages = {102103},
  author = {J. Kiukas and P. Lahti and J. Schultz and R. F. Werner},
  title = {Characterization of informational completeness for covariant phase space observables},
  journal = {Journal of Mathematical Physics}
}

@article{Wiener1932,
  doi = {10.2307/1968102},
  year = {1932},
  publisher = {{JSTOR}},
  volume = {33},
  number = {1},
  pages = {1},
  author = {Norbert Wiener},
  title = {Tauberian {Theorems}},
  journal = {The Annals of Mathematics}
}

@Book{conway2000a,
 author = {Conway, John},
 title = {A Course in Operator Theory},
 publisher = {American Mathematical Society},
 year = {2000},
 address = {Providence, R.I},
 doi = {10.1017/S0024609301319276}
 }

@article{Bayer2015,
  doi = {10.1007/s00020-014-2208-z},
  year = {2015},
  publisher = {Springer Science and Business Media {LLC}},
  volume = {82},
  number = {1},
  pages = {95--117},
  author = {Dominik Bayer and Karlheinz Gr\"{o}chenig},
  title = {Time{\textendash}{Frequency} {Localization} {Operators} and a {Berezin} {Transform}},
  journal = {Integral Equations and Operator Theory}
}

@Book{rudin1991functional,
 author = {Rudin, Walter},
 title = {Functional Analysis, 2nd Edition},
 publisher = {McGraw-Hill},
 year = {1991},
 address = {New York},
 isbn = {978-0070542365}
 }

@incollection{Gazeau2020,
  doi = {10.1007/978-3-030-56005-8_8},
  year = {2020},
  publisher = {Springer International Publishing},
  pages = {135--155},
  author = {Jean Pierre Gazeau and C{\'{e}}lestin Habonimana},
  title = {Signal Analysis and Quantum Formalism: Quantizations with No {P}lanck Constant},
  booktitle = {Landscapes of Time-Frequency Analysis}
}

@article{Gazeau2020_2,
  doi = {10.1007/s43036-020-00039-9},
  year = {2020},
  publisher = {Springer Science and Business Media {LLC}},
  volume = {5},
  number = {3},
  pages = {901--935},
  author = {Jean-Pierre Gazeau and Tomoi Koide and Romain Murenzi},
  title = {2-{D} covariant affine integral quantization(s)},
  journal = {Advances in Operator Theory}
}

@article{Gazeau2016,
  doi = {10.1063/1.4949366},
  year = {2016},
  publisher = {{AIP} Publishing},
  volume = {57},
  number = {5},
  pages = {052102},
  author = {Jean Pierre Gazeau and Romain Murenzi},
  title = {Covariant affine integral quantization(s)},
  journal = {Journal of Mathematical Physics}
}

@article{FEICHTINGER1989307,
title = {Banach spaces related to integrable group representations and their atomic decompositions, {I}},
journal = {Journal of Functional Analysis},
volume = {86},
number = {2},
pages = {307-340},
year = {1989},
issn = {0022-1236},
doi = {10.1016/0022-1236(89)90055-4},
author = {Hans G Feichtinger and K.H Gröchenig},
}

@article{Feichtinger1989,
  doi = {10.1007/bf01308667},
  year = {1989},
  publisher = {Springer Science and Business Media {LLC}},
  volume = {108},
  number = {2-3},
  pages = {129--148},
  author = {Hans G. Feichtinger and K. H. Gröchenig},
  title = {Banach spaces related to integrable group representations and their atomic decompositions. Part {II}},
  journal = {Monatshefte für Mathematik}
}

@article{mantoiu2017pseudo,
  title={Pseudo-Differential Operators, {W}igner Transform and {W}eyl Systems on Type {I} Locally Compact Groups},
  author={Mantoiu, Marius and Ruzhansky, Michael},
  journal={Documenta Mathematica},
  volume={22},
  pages={1539--1592},
  year={2017}
}

@ARTICLE{Daubechies1990,
  author={Daubechies, I.},
  journal={IEEE Transactions on Information Theory}, 
  title={The wavelet transform, time-frequency localization and signal analysis}, 
  year={1990},
  volume={36},
  number={5},
  pages={961-1005},
  doi={10.1109/18.57199}
  }

@article{Cordero2003,
  doi = {10.1016/s0022-1236(03)00166-6},
  year = {2003},
  publisher = {Elsevier {BV}},
  volume = {205},
  number = {1},
  pages = {107--131},
  author = {Elena Cordero and Karlheinz Gr\"{o}chenig},
  title = {Time{\textendash}Frequency analysis of localization operators},
  journal = {Journal of Functional Analysis}
}

@article{Dahlke2009,
  doi = {10.1016/j.acha.2009.02.004},
  year = {2009},
  publisher = {Elsevier {BV}},
  volume = {27},
  number = {2},
  pages = {195--214},
  author = {Stephan Dahlke and Gitta Kutyniok and Gabriele Steidl and Gerd Teschke},
  title = {Shearlet coorbit spaces and associated {Banach} frames},
  journal = {Applied and Computational Harmonic Analysis}
}

@incollection{Kutyniok2012,
  doi = {10.1007/978-0-8176-8316-0_1},
  year = {2012},
  publisher = {Birkh\"{a}user Boston},
  pages = {1--38},
  author = {Gitta Kutyniok and Demetrio Labate},
  title = {Introduction to {S}hearlets},
  booktitle = {Shearlets}
}

@article{DAHLKE2008,
  doi = {10.1142/s021969130800229x},
  year = {2008},
  publisher = {World Scientific Pub Co Pte Lt},
  volume = {06},
  number = {02},
  pages = {157--181},
  author = {Stephan Dahlke and Gitta Kutyniok and Peter Maass and Chen Sagiv and Hans-Georg Stark and Gerd Teschke},
  title = {The {U}ncertainty {P}rinciple {A}ssociated with the {C}ontinuous {S}hearlet {T}ransform.},
  journal = {International Journal of Wavelets,  Multiresolution and Information Processing}
}

@article{Sagiv2006,
  doi = {10.1007/s10851-006-8301-4},
  year = {2006},
  publisher = {Springer Science and Business Media {LLC}},
  volume = {26},
  number = {1-2},
  pages = {149--166},
  author = {Chen Sagiv and Nir A. Sochen and Yehoshua Y. Zeevi},
  title = {The Uncertainty Principle: Group Theoretic Approach,  Possible Minimizers and Scale-Space Properties},
  journal = {Journal of Mathematical Imaging and Vision}
}

@article{Ali2000,
  doi = {10.1364/josaa.17.002277},
  year = {2000},
  publisher = {The Optical Society},
  volume = {17},
  number = {12},
  pages = {2277},
  author = {S. Twareque Ali and A. E. Krasowska and R. Murenzi},
  title = {Wigner functions from the two-dimensional wavelet group},
  journal = {Journal of the Optical Society of America A}
}

@article{Antoine1996,
  doi = {10.1016/0165-1684(96)00065-5},
  year = {1996},
  publisher = {Elsevier {BV}},
  volume = {52},
  number = {3},
  pages = {259--281},
  author = {J.-P. Antoine and R. Murenzi},
  title = {Two-dimensional directional wavelets and the scale-angle representation},
  journal = {Signal Processing}
}

@article{Dahlke2009_multidimensional,
  doi = {10.1007/s00041-009-9107-8},
  year = {2009},
  publisher = {Springer Science and Business Media {LLC}},
  volume = {16},
  number = {3},
  pages = {340--364},
  author = {Stephan Dahlke and Gabriele Steidl and Gerd Teschke},
  title = {The Continuous Shearlet Transform in Arbitrary Space Dimensions},
  journal = {Journal of Fourier Analysis and Applications}
}

@book{Wong2002,
  doi = {10.1007/978-3-0348-8217-0},
  year = {2002},
  publisher = {Birkh\"{a}user Basel},
  author = {M. W. Wong},
  title = {Wavelet Transforms and Localization Operators}
}

@article{Feichtinger2001,
  doi = {10.1307/mmj/1008719032},
  year = {2001},
  publisher = {Michigan Mathematical Journal},
  volume = {49},
  number = {1},
  author = {H. G. Feichtinger and K. Nowak},
  title = {A {S}zeg{\H{o}}-type theorem for {G}abor-{T}oeplitz localization operators},
  journal = {Michigan Mathematical Journal}
}

@article{Leptin1976,
  doi = {10.1007/bf01403147},
  year = {1976},
  publisher = {Springer Science and Business Media {LLC}},
  volume = {31},
  number = {3},
  pages = {259--278},
  author = {Horst Leptin},
  title = {Ideal theory in group algebras of locally compact groups},
  journal = {Inventiones Mathematicae}
}

@article{Dasgupta2022,
  doi = {10.1016/j.bulsci.2021.103087},
  year = {2022},
  month = feb,
  publisher = {Elsevier {BV}},
  volume = {174},
  pages = {103087},
  author = {Aparajita Dasgupta and Santosh Kumar Nayak},
  title = {Pseudo-differential operators,  {Wigner} transform and {Weyl} transform on the {Similitude} group, $\mathbb{SIM}$(2)},
  journal = {Bulletin des Sciences Math{\'{e}}matiques}
}

@article{Berge2021_coorbit,
  doi = {10.1007/s00041-021-09892-5},
  year = {2021},
  publisher = {Springer Science and Business Media {LLC}},
  volume = {28},
  number = {1},
  author = {Eirik Berge},
  title = {A Primer on Coorbit Theory},
  journal = {Journal of Fourier Analysis and Applications}
}

@misc{skrettingland2020_gabor,
    Author = {E. Skrettingland},
    Title = {On {G}abor g-frames and {F}ourier series of operators},
    Year = {2020},
    doi = {10.48550/arXiv.1906.09662}
}

@article{Wong2002_Lp,
 ISSN = {00029939, 10886826},
 author = {M. W. Wong},
 journal = {Proceedings of the American Mathematical Society},
 number = {10},
 pages = {2911--2919},
 publisher = {American Mathematical Society},
 title = {${L}^p$ Boundedness of Localization Operators Associated to Left Regular Representations},
 volume = {130},
 year = {2002}
}

@article{Romero2012,
  doi = {10.1016/j.jfa.2011.09.005},
  year = {2012},
  publisher = {Elsevier {BV}},
  volume = {262},
  number = {1},
  pages = {59--93},
  author = {Jos{\'{e}} Luis Romero},
  title = {Characterization of coorbit spaces with phase-space covers},
  journal = {Journal of Functional Analysis}
}

@article{Li2014,
  doi = {10.1051/mmnp/20149513},
  year = {2014},
  publisher = {{EDP} Sciences},
  volume = {9},
  number = {5},
  pages = {194--203},
  author = {J. Li and M. W. Wong},
  editor = {David Damanik and Michael Ruzhansky and Vitali Vougalter and M.W. Wong},
  title = {Localization Operators for Ridgelet Transforms},
  journal = {Mathematical Modelling of Natural Phenomena}
}

@article{Balazs2019,
  doi = {10.1090/btran/42},
  year = {2019},
  publisher = {American Mathematical Society ({AMS})},
  volume = {6},
  number = {11},
  pages = {346--364},
  author = {Peter Balazs and Karlheinz Gr\"{o}chenig and Michael Speckbacher},
  title = {Kernel theorems in coorbit theory},
  journal = {Transactions of the American Mathematical Society,  Series B}
}

@incollection{Feichtinger1988,
  doi = {10.1007/bfb0078863},
  year = {1988},
  publisher = {Springer Berlin Heidelberg},
  pages = {52--73},
  author = {Hans G. Feichtinger and Karlheinz Gr\"{o}chenig},
  title = {A unified approach to atomic decompositions via integrable group representations},
  booktitle = {Function Spaces and Applications}
}

@article{Fhr2006,
  doi = {10.1007/s00209-006-0019-x},
  year = {2006},
  publisher = {Springer Science and Business Media {LLC}},
  volume = {255},
  number = {1},
  pages = {177--194},
  author = {H. F\"{u}hr and K. Gr\"{o}chenig},
  title = {Sampling theorems on locally compact groups from oscillation estimates},
  journal = {Mathematische Zeitschrift}
}

@incollection{Klauder2012,
  doi = {10.1007/978-0-8176-8379-5_13},
  year = {2012},
  publisher = {Birkh\"{a}user Boston},
  pages = {251--266},
  author = {John R. Klauder and Bo-Sture K. Skagerstam},
  title = {Extension of {B}erezin{\textendash}{L}ieb Inequalities},
  booktitle = {Excursions in Harmonic Analysis,  Volume 2}
}

@book{Antoine2004,
  doi = {10.1017/cbo9780511543395},
  year = {2004},
  month = sep,
  publisher = {Cambridge University Press},
  author = {Jean-Pierre Antoine and Romain Murenzi and Pierre Vandergheynst and Syed Twareque Ali},
  title = {Two-Dimensional Wavelets and their Relatives}
}

@misc{Dasgupta2022_poincare,
  doi = {10.48550/ARXIV.2207.03658},
  author = {Dasgupta,  Aparajita and Nayak,  Santosh Kumar},
  keywords = {Functional Analysis (math.FA),  FOS: Mathematics,  FOS: Mathematics,  Primary 47G10,  47G30,  Secondary 42C40},
  title = {Pseudo-Differential Operators,  {W}igner Transform,  and {W}eyl Transform on the Affine {P}oincaré Group},
  publisher = {arXiv},
  year = {2022},
  copyright = {Creative Commons Attribution 4.0 International}
}

@book{Ali2014,
  doi = {10.1007/978-1-4614-8535-3},
  year = {2014},
  publisher = {Springer New York},
  author = {Syed Twareque Ali and Jean-Pierre Antoine and Jean-Pierre Gazeau},
  title = {Coherent States,  Wavelets,  and Their Generalizations}
}

@misc{McNulty2022,
    title={Time-Frequency Analysis and Coorbit Spaces of Operators}, 
    author={Monika Dörfler and Franz Luef and Henry McNulty and Eirik Skrettingland},
    year={2022},
    eprint={2210.04844},
    archivePrefix={arXiv},
    primaryClass={math.FA}
}
\end{document}